\newcommand{\leqnos}{\tagsleft@true\let\veqno\@@leqno}
\newcommand{\reqnos}{\tagsleft@false\let\veqno\@@eqno}
\def\MR#1{}
\newtheorem{thm}{}[section]
\newtheorem{theorem}[thm]{Theorem}
\newtheorem{lemma}[thm]{Lemma}
\newtheorem{proposition}[thm]{Proposition}
\theoremstyle{definition}
\newtheorem{definition}[thm]{Definition}
\theoremstyle{remark}
\newtheorem{remark}[thm]{Remark}
\newtheorem{question}[thm]{Question}
\numberwithin{equation}{section}
\newcommand{\VMO}{\ensuremath{\mathrm{VMO}}}
\newcommand{\BMO}{\ensuremath{\mathrm{BMO}}}
\newcommand{\BV}{\ensuremath{\mathrm{BV}}}
\newcommand{\Ts}{\ensuremath{\mathcal{T}}}
\newcommand{\tl}{\ensuremath{\bm{f}}}
\newcommand{\Qy}{\ensuremath{\mathscr{R}}}
\newcommand{\Dy}{\ensuremath{\mathscr{D}}}
\newcommand{\AD}{\ensuremath{\mathcal{A}}}
\newcommand{\HB}{\ensuremath{\mathcal{H}}}
\newcommand{\SB}{\ensuremath{\mathcal{S}}}
\newcommand{\DB}{\ensuremath{\mathcal{D}}}
\newcommand{\sss}{\ensuremath{\bm{s}}}
\newcommand{\dd}{\ensuremath{\bm{d}}}
\newcommand{\ee}{\ensuremath{\bm{e}}}
\newcommand{\JB}{\ensuremath{\mathcal{J}}}
\newcommand{\FieldC}{\ensuremath{{\bm{\Upsilon}}}}
\newcommand{\prim}{\ensuremath{\bm{\sigma}}}
\newcommand{\dom}{\ensuremath{\bm{\delta}}}
\newcommand{\ldf}{\ensuremath{\underline{\bm{\varphi}}}}
\newcommand{\udf}{\ensuremath{\overline{\bm{\varphi}}}}
\newcommand{\lsdf}{\ensuremath{\bm{\varphi_l}}}
\newcommand{\usdf}{\ensuremath{\bm{\varphi_u}}}
\newcommand{\Ct}{\ensuremath{\bm{C}}}
\newcommand{\wbid}{\ensuremath{\bm{B^w}}}
\newcommand{\bid}{\ensuremath{\bm{B}}}
\newcommand{\rto}{\ensuremath{\bm{r}}}
\newcommand{\ucc}{\ensuremath{\bm{u}}}
\newcommand{\qglc}{\ensuremath{\bm{q}}}
\newcommand{\qg}{\ensuremath{\bm{g}}}
\newcommand{\qgc}{\ensuremath{\bm{g^c}}}
\newcommand{\fss}{\ensuremath{\bm{\psi}}}
\newcommand{\dr}{\ensuremath{\bm{a}}}
\newcommand{\sqd}{\ensuremath{\bm{\lambda^d}}}
\newcommand{\sq}{\ensuremath{\bm{\lambda}}}
\newcommand{\unc}{\ensuremath{\bm{k}}}
\newcommand{\uncc}{\ensuremath{\bm{k^c}}}
\newcommand{\leba}{\ensuremath{\bm{L^a}}}
\newcommand{\leb}{\ensuremath{\bm{L}}}
\newcommand{\sdem}{\ensuremath{\bm{\mu^s}}}
\newcommand{\dem}{\ensuremath{\bm{\mu}}}
\newcommand{\slcd}{\ensuremath{\bm{\nu^d}}}
\newcommand{\slc}{\ensuremath{\bm{\nu}}}
\newcommand{\xx}{\ensuremath{\bm{x}}}
\newcommand{\yy}{\ensuremath{\bm{y}}}
\newcommand{\XX}{\ensuremath{\mathbb{X}}}
\newcommand{\YY}{\ensuremath{\mathbb{Y}}}
\newcommand{\XB}{\ensuremath{\mathcal{X}}}
\newcommand{\Ind}{\ensuremath{\mathbbm{1}}}
\newcommand{\BB}{\ensuremath{\mathcal{B}}}
\newcommand{\ww}{\ensuremath{\bm{w}}}
\newcommand{\Id}{\ensuremath{\mathrm{Id}}}
\newcommand{\GG}{\ensuremath{\mathcal{G}}}
\newcommand{\TT}{\ensuremath{\mathbb{T}}}
\newcommand{\EE}{\ensuremath{\mathbb{E}}}
\newcommand{\Tt}{\ensuremath{\mathcal{T}}}
\newcommand{\FF}{\ensuremath{\mathbb{F}}}
\newcommand{\RR}{\ensuremath{\mathbb{R}}}
\newcommand{\CC}{\ensuremath{\mathbb{C}}}
\newcommand{\NN}{\ensuremath{\mathbb{N}}}
\newcommand{\ZZ}{\ensuremath{\mathbb{Z}}}
\newcommand{\RTO}{\ensuremath{\mathcal{R}}}
\newcommand{\Fou}{\ensuremath{\mathcal{F}}}
\newcommand{\YB}{\ensuremath{\mathcal{Y}}}
\DeclareMathOperator*{\Ave}{Ave}
\DeclareMathOperator{\spn}{span}
\DeclareMathOperator{\sgn}{sign}
\DeclareMathOperator{\supp}{supp}
\title[Greedy-like parameters of second generation]{New parameters and Lebesgue-type estimates in greedy approximation}
\begin{document}

\author[Albiac]{Fernando Albiac}
\address{Department of Mathematics, Statistics, and Computer Sciencies--InaMat2 \\
Universidad P\'ublica de Navarra\\
Campus de Arrosad\'{i}a\\
Pamplona\\
31006 Spain}
\email{fernando.albiac@unavarra.es}

\author[Ansorena]{Jos\'e L. Ansorena}
\address{Department of Mathematics and Computer Sciences\\
Universidad de La Rioja\\
Logro\~no\\
26004 Spain}
\email{joseluis.ansorena@unirioja.es}

\author[Bern\'a]{Pablo M. Bern\'a}
\address{Pablo M. Bern\'a\\
Departamento de Matem\'atica Aplicada y Es\-ta\-d\'is\-ti\-ca, Facultad de Ciencias Econ\'omicas y Empresariales, Universidad San Pablo-CEU, CEU Universities\\ Madrid, 28003 Spain.}
\email{pablo.bernalarrosa@ceu.es}

\subjclass[2010]{41A65, 41A25, 41A46,41A17, 46B15}

\keywords{Nonlinear approximation, Thresholding greedy algorithm, Basis, Lebesgue parameter (or constant)}

\begin{abstract}
The purpose of this paper is to quantify the size of the Lebesgue constants $(\leb_{m})_{m=1}^{\infty}$ associated with the thresholding greedy algorithm in terms of a new generation of parameters that modulate accurately some features of a general basis. This fine-tuning of constants allows us to provide an answer to the question raised by Temlyakov in 2011 to find a natural sequence of greedy-type parameters for arbitrary bases in Banach (or quasi-Banach) spaces which combined \emph{linearly} with the sequence of unconditionality parameters $(\unc_m)_{m=1}^{\infty}$ determines the growth of $(\leb_{m})_{m=1}^{\infty}$. Multiple theoretical applications and computational examples complement our study.
\end{abstract}

\thanks{F. Albiac acknowledges the support of the Spanish Ministry for Science and Innovation under Grant PID2019-107701GB-I00 for \emph{Operators, lattices, and structure of Banach spaces}. F. Albiac and J.~L. Ansorena acknowledge the support of the Spanish Ministry for Science, Innovation, and Universities under Grant PGC2018-095366-B-I00 for \emph{An\'alisis Vectorial, Multilineal y Aproximaci\'on}.}

\maketitle

\section{Introduction and background}\noindent
Let $\XX$ be an infinite-dimensional separable Banach space (or, more generally, a quasi-Banach space) over the real or complex field $\FF$, and let $\XB=(\xx_n)_{n=1}^\infty$ be a \emph{basis} in $\XX$, i.e., $(\xx_n)_{n=1}^\infty$ is a norm-bounded sequence that generates the entire space $\XX$, in the sense that
\[
\overline{\spn(\xx_n \colon n\in\NN)}=\XX,
\]
and for which there is a (unique) norm-bounded sequence $\XB^*=(\xx_{n}^*)_{n=1}^\infty$ in the dual space $\XX^{\ast}$ such that $(\xx_{n}, \xx_{n}^{\ast})_{n=1}^{\infty}$ is a biorthogonal system. The sequence $\XB^*$ will be called the \emph{dual basis} of $\XB$.

For each $m\in\NN$, we let $\Sigma_{m}[\XB, X]$ denote the collection of all $f$ in $\XX$ which can be expressed as a linear combination of $m$ elements of $\XB$, that is,
\[
\Sigma_{m}[\XB, \XX]= \left\{\sum_{n\in A} a_n\, \xx_n \colon A\subseteq
\NN,\, |A| = m,\; a_n\in\RR\right\}, \quad m=1,2,\dots
\]
A fundamental question in nonlinear approximation theory using bases is how to construct for each $f\in \XX$ and each $m\in\NN$ an element $g_{m}$ in $\Sigma_{m}$ so that the error of the approximation of $f$ by $g_{m}$ is as small as possible. To that end we need, on one hand, an easy way to build for all $m\in \NN$ an $m$-term approximant of any function (or signal) $f\in \XX$, and on the other hand, a way to measure the efficiency of our approximation.

Konyagin and Temlyakov \cite{KoTe1999} introduced in 1999 the \emph{thresholding greedy algorithm} (TGA for short) $(\GG_{m})_{m=1}^{\infty}$, where $\GG_{m}(f)$ is obtained by choosing the first $m$ terms in decreasing order or magnitude from the formal series expansion $\sum_{n=1}^\infty\xx_n^*(f)\, \xx_n$ of $f$ with respect to $\XB$, with the agreement that when two terms are of equal size we take them in the basis order.
By our assumptions on the dual basis $\XB^*$, the \emph{coefficient transform}
\[
\Fou\colon\XX\to\FF^\NN, \quad
\Fou(f)=(\xx_n^*(f))_{n=1}^\infty, \quad f\in\XX,
\]
is a bounded map from $\XX$ into $c_0$, so that the maps $\GG_{m}\colon \XX\to \XX$ are well defined for all $m\in \NN$; however, the operators $(\GG_{m})_{m=1}^{\infty}$ are not linear nor continuous.

To measure the performance of the greedy algorithm we compare the error $\Vert f-\GG_{m}(f)\Vert$ in the approximation of any $f\in \XX$ by $\GG_{m}(f)$, with the \emph{best $m$-term approximation error}, given by
\[
\sigma_{m}[\XB, \XX](f):=\sigma_{m}(f) =\inf\{ \Vert f -g \Vert \colon g\in \Sigma_{m}\}.
\]
An upper estimate for the rate $\Vert f- \GG_m(f)\Vert/\sigma_{m}(f)$ is usually called a \emph{Lebesgue-type inequality} for the TGA (see \cite{Temlyakov2015}*{Chapter 2}). Obtaining Lebesgue-type inequalities is tantamount to finding upper bounds for the \emph{Lebesgue constants} of the basis, given for $m\in \NN$ by
\[
\leb_m=\leb_m[\XB,\XX]=\sup\left\{\frac{\Vert f- \GG_{m}(f)\Vert}{\sigma_{m}(f)}\colon f\in \XX\setminus \Sigma_m\right\}.
\]
By definition, the basis $\XB$ is \emph{greedy} \cite{KoTe1999} if and only if
\[
\Ct_g=\Ct_g[\XB,\XX]:=\sup_m \leb_m<\infty.
\]
Certain important bases, such as the Haar system in $L_p$, $1<p<\infty$, or the Haar system in $H_p$, $0<p\le 1$, are known to be greedy (see \cites{Temlyakov1998,Woj2000,AABW2019}). In the literature we also find instances where the Lebesgue constants have been computed or estimated for special non-greedy bases in important spaces:

\begin{itemize}[label=\tiny{$\bullet$}, leftmargin=*]
\item In \cite{Oswald2001}, the Lebesgue parameters of the Haar basis
in the spaces $\BMO$ and dyadic $\BMO$
were computed.
\item More recently, in \cites{TemYangYe1, TemYangYe2}, the Lebesgue constants for tensor product bases in $L_{p}$-spaces (in particular, for the multi-Haar basis) were calculated.
\item The Lebesgue constants for the trigonometric basis in $L_{p}$ are also known (see \cite{Temlyakov1998b}).

\item The paper \cite{DSBT2012} estimates the Lebesgue constants for bases in $L_{p}$-spaces with specific properties (such as being uniformly bounded).
\item Lebesgue constants for redundant dictionaries are studied in \cite{Tembook}*{Section 2.6}.
\end{itemize}

Calculating the exact value of the Lebesgue constants can be in general a difficult task, so to study the efficiency of non-greedy bases we must settle for obtaining easy-to-handle estimates that control the asymptotic growth of $(\leb_m)_{m=1}^{\infty}$. To center the problem we shall introduce some preliminary notation.

\subsection{Unconditionality parameters and democracy parameters}
Konyagin and Temlyakov \cite{KoTe1999} characterized greedy bases as those bases that are simultaneously unconditional and democratic. Thus, in order to find bounds for the Lebesgue constants it is only natural to quantify the unconditionality and the democracy of a basis and study their relation with $\leb_m$.

For finite $A\subseteq \NN$, we let $S_A=S_A[\XB,\XX]\colon \XX \to\XX$ denote the coordinate projection on the set $A$ , i.e.,
\[
S_A(f)=\sum_{n\in A} \xx_n^*(f)\, \xx_n,\quad f\in \XX.
\]

The norm of the coordinate projections in a basis $\XB=(\xx_n)_{n=1}^\infty$ is quantified by the \emph{unconditionality parameters}
\[
\unc_m=\unc_m[\XB,\XX] :=\sup_{|A|= m} \Vert S_A\Vert, \quad m\in\NN,
\]
and by the \emph{complemented unconditionality parameters},
\[
\uncc_m=\unc_m[\XB,\XX] :=\sup_{|A|= m} \Vert \Id_\XX-S_A\Vert, \quad m\in\NN.
\]
Note that, if $\XX$ is a $p$-Banach space, $0<p\le 1$, then 
\begin{equation}\label{kmkmc}
(\unc_m)^p\le 1+(\uncc_m)^p,\quad (\uncc_m)^p\le 1+(\unc_m)^p,
\end{equation} and $(\unc_{2m})^p \le 2 (\unc_m)^p$ for all $m\in\NN$.

We also define the $m$th \emph{democracy parameter} of the basis as
\[
\dem_m=\dem_m[\XB,\XX]=\sup\limits_{|A|=|B|\le m}\frac{\Vert\Ind_A\Vert}{\Vert \Ind_B\Vert},
\]
where
\[
\Ind_A=\Ind_A[\XB,\XX]=\sum_{n\in A} \xx_n.
\]
A basis is \emph{unconditional} if and only if $\sup_m\unc_m<\infty$, and it is \emph{democratic} if and only if $\sup_m \dem_m<\infty$. A reproduction of the original proof of the above-mentioned characterization of greedy bases from \cite{KoTe1999} for general bases, where we pay close attention to thfe dependency on $m$ in the constants involved, yields the following upper and lower bounds for the Lebesgue parameters in terms of $\unc_m$ and $\dem_m$:
\begin{equation}\label{eq:UncG}
\frac{1}{C_2} \max\left\{ \unc_m, \dem_m\right\} \le \leb_m \le C_1 \unc_m\, \dem_m,
\end{equation}
where $C_1$ and $C_2$ depend only on the modulus of concavity of the space $\XX$ (see \cite{BBG2017}*{Proposition 1.1} and \cite{AABW2019}*{Theorem 7.2}). However, these bounds are not optimal for two reasons: first of all, since the function on the left is not the same as the function on the right, we loose accuracy in estimating the size of $\leb_{m}$; secondly, the function on the right does not depend linearly on the unconditionality and democracy parameters.

The investigation of Lebesgue constants for greedy algorithms dates back to the initial stages of the theory, with some relevant ideas appearing already in \cite{KoTe1999}. Oswald gave in \cite{Oswald2001}*{Theorem 1} the correct asymptotic behavior for the quantities $\leb_m$ in the general case
replacing $(\dem_m)_{m=1}^\infty$ with other parameters. However, its application to particular systems is tedious due the complicated, implicit definitions of the parameters his estimates rely on.

Other authors have approached the subject by imposing extra conditions on the basis which permit to obtain sharp estimates for $(\leb_m)_{m=1}^{\infty}$. The first movers in this direction were Garrig\'os et al., who in 2013 gave the following partial answer to Temlyakov's question for \emph{quasi-greedy} bases, i.e., bases for which the operators $(\GG_m)_{m=1}^\infty$ are uniformly bounded (or, equivalently, bases for which $\GG_{m}(f)$ converges to $f$ for all $f\in \XX$).

\begin{theorem}[\cite{GHO2013}*{Theorem 1.1}]\label{GHOthm} If the basis $\XB$ is quasi-greedy then there is a constant $C$ such that
\[
\frac{1}{C} \max\{\dem_m, \unc_m\}\le \leb_{m}\le C \max\{\dem_m, \unc_m\}, \quad m\in \NN.
\]
\end{theorem}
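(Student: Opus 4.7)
The lower bound in the statement is already contained in \eqref{eq:UncG}, so the task reduces to proving the upper estimate $\leb_m\le C\max\{\dem_m,\unc_m\}$ for quasi-greedy bases. The plan is to implement the Konyagin--Temlyakov set-difference decomposition while exploiting quasi-greediness to replace the product $\unc_m\dem_m$ by the sum (equivalently, the maximum).

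Fix $f\in\XX\setminus\Sigma_m$, let $A\subseteq\NN$ with $|A|=m$ be the greedy set so that $\GG_m(f)=S_A(f)$, and choose $g=\sum_{n\in B}b_n\,\xx_n\in\Sigma_m$ with $|B|=m$ and $\|f-g\|\le(1+\eta)\sigma_m(f)$. Writing $f-\GG_m(f)=(\Id-S_A)(f-g)+(\Id-S_A)(g)$ and noting $(\Id-S_A)(g)=S_{B\setminus A}(g)$ yields
\[
\|f-\GG_m(f)\|\lesssim\uncc_m\|f-g\|+\|S_{B\setminus A}(g)\|,
\]
with the implicit constant depending only on the modulus of concavity of $\XX$. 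Decomposing further $S_{B\setminus A}(g)=S_{B\setminus A}(f)-S_{B\setminus A}(f-g)$, the only piece that is not immediately dominated by $\unc_m\|f-g\|$ (and hence, through \eqref{kmkmc}, by $\max\{\dem_m,\unc_m\}\|f-g\|$) is $\|S_{B\setminus A}(f)\|$, which must be estimated by hand.

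Set $t:=\min_{n\in A}|\xx_n^*(f)|$. By the greedy selection, $|\xx_n^*(f)|\le t$ for every $n\in B\setminus A$, while $|\xx_n^*(f-g)|=|\xx_n^*(f)|\ge t$ for every $n\in A\setminus B$, since such indices lie outside $B$. The central ingredient, genuinely available only because $\XB$ is quasi-greedy, is a truncation lemma asserting that, for some $C_{qg}$ depending solely on the quasi-greedy constant and the modulus of concavity,
\[
\Bigl\|\sum_{n\in F}a_n\,\xx_n\Bigr\|\le C_{qg}\bigl(\max_{n\in F}|a_n|\bigr)\|\Ind_F^\varepsilon\|\qquad\text{and}\qquad\|\Ind_F\|\approx\|\Ind_F^\varepsilon\|
\]
for every finite $F\subseteq\NN$ and every choice of signs $\varepsilon$. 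Applied with $F=B\setminus A$ and the coefficients $(\xx_n^*(f))_{n\in F}$, and with $F=A\setminus B$ and the function $f-g$, these give
\[
\|S_{B\setminus A}(f)\|\lesssim t\,\|\Ind_{B\setminus A}\|\qquad\text{and}\qquad t\,\|\Ind_{A\setminus B}\|\lesssim\|f-g\|.
\]
Since $|B\setminus A|=|A\setminus B|\le m$, democracy yields $\|\Ind_{B\setminus A}\|\le\dem_m\|\Ind_{A\setminus B}\|$, and chaining the three previous inequalities produces $\|S_{B\setminus A}(f)\|\lesssim\dem_m\|f-g\|$. Collecting all contributions and letting $\eta\to 0$ closes the argument.

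The principal obstacle, where essentially all the work concentrates, is the quasi-greedy truncation lemma invoked above. Its proof relies on a dyadic decomposition of the coefficient level sets combined with iterated application of the uniform bound on $(\GG_m)_{m=1}^\infty$; in the quasi-Banach setting the iteration must be executed carefully to control the accumulated factor of the modulus of concavity, but the bound obtained is uniform in $m$ and therefore does not contaminate the linear dependence on $\max\{\dem_m,\unc_m\}$ asserted by the theorem.
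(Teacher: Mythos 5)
Your argument is correct, and when unwound it coincides with the machinery the paper uses: the paper does not prove this statement directly but derives it from Theorem~\ref{thm:main} together with Proposition~\ref{prop:SSDemRTO} (via Theorem~\ref{thm:GHOI}), and the proof of inequality \eqref{eq.former} in Theorem~\ref{thm:main} is exactly your Konyagin--Temlyakov decomposition $f-S_A(f)=(f-z)-S_{A\cup B}(f-z)+S_{B\setminus A}(f)$, while Proposition~\ref{prop:SSDemRTO} is exactly your ``truncation lemma plus democracy'' step packaged as the estimate $\sq_m\lesssim \rto_m\,\dem_m$ (with $\rto_m$ uniformly bounded for quasi-greedy bases by \eqref{eq:RTOQG:1} and \eqref{eq:RTOQG:p}). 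So the difference is organizational rather than mathematical: you inline the quasi-greedy truncation estimate into the decomposition, whereas the paper factors it through the intermediate parameter $\sq_m$, which is what makes the bound reusable for non-quasi-greedy bases. One small imprecision to fix in your write-up: the displayed ``truncation lemma'' as you state it only yields the upper estimate $\Vert S_{B\setminus A}(f)\Vert\lesssim t\,\Vert \Ind_{B\setminus A}\Vert$; the companion bound $t\,\Vert \Ind_{A\setminus B}\Vert\lesssim\Vert f-g\Vert$ is the \emph{restricted truncation operator} estimate (the paper's $\rto_m\le \qg_m$, i.e.\ \cite{DKKT2003}*{Lemma 2.2}), applied to the level set $\{n\colon |\xx_n^*(f-g)|\ge t\}\supseteq A\setminus B$ and then restricted to $A\setminus B$ by unconditionality for constant coefficients; this is the statement whose proof is the level-set/iteration argument you describe, so you should state it as a separate (second) direction of the lemma rather than as an instance of the first display.
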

Thus, in the particular case that $\XB$ is unconditional, $(\leb_m)_{m=1}^\infty$ is asymptotically of the same order as $(\dem_m)_{m=1}^\infty$.

Another important property of quasi-greedy bases is that their unconditionality constants grow slowly. Indeed, if $\XX$ is a $p$-Banach space, the contribution of $\unc_m$ to $\leb_{m}$ is at most of the order of $(\log m)^{1/p}$
(see \cite{DKK2003}*{Lemma 8.2}, \cite{GHO2013}*{Theorem 5.1} and \cite{AAW2020}*{Theorem 5.1}).
Hence, if $\XB$ is quasi-greedy and democratic there is a constant $C$ such that
\[
\leb_m \le C (\log m)^{1/p},\quad m\ge 2.
\]

\subsection{Towards new parameters in greedy approximation}
This section is geared towards the introduction of a new breed of parameters with the aim to provide a satisfactory answer to Temlyakov's aforementioned question. For that we will adopt a view point that regards certain Lebesgue-type parameters as quantifiers of the different degrees of symmetry that can be found in a basis.

Recall that a basis $\XB=(\xx_n)_{n=1}^\infty$ of $\XX$ is \emph{symmetric} if it is equivalent to all its permutations, i.e., there is a constant $C\ge 1$ such that
\begin{equation}\label{eq:sym}
\frac{1}{C} \left \Vert \sum_{n=1}^\infty a_n \, \xx_n\right\Vert
\le \left\Vert \sum_{n=1}^\infty a_n \, \xx_{\pi(n)}\right\Vert
\le C \left \Vert \sum_{n=1}^\infty a_n \, \xx_n\right\Vert
\end{equation}
for all $(a_n)_{n=1}^\infty\in c_{00}$ and all permutations $\pi$ on $\NN$. Democracy is the weakest symmetry condition that a basis can have, where we demand to a basis to verify \eqref{eq:sym} when all coefficients $a_{n}$ of $f\in\XX$ are equal (without loss of generality) to 1.

Symmetric bases are in particular unconditional, which permits to improve the previous inequality to have
\begin{equation}\label{eq:symbis}
\frac{1}{C} \left \Vert \sum_{n=1}^\infty a_n \, \xx_n\right\Vert
\le \left\Vert \sum_{n=1}^\infty \varepsilon_n\, a_{n} \, \xx_{\pi(n)}\right\Vert
\le C \left \Vert \sum_{n=1}^\infty a_n \, \xx_n\right\Vert
\end{equation}
for all $(\varepsilon_n)_{n=1}^\infty\in \EE^\NN$, where $\EE$ denotes the subset of $\FF$ consisting of all scalars of modulus 1, for a possibly larger constant $C$. If a basis $\XB=(\xx_n)_{n=1}^\infty$ of $\XX$ satisfies \eqref{eq:symbis} when all coefficients $a_{n}$ are 1 it is called \emph{superdemocratic}. To quantify the superdemocracy of $\XB$ we use the $m$th \emph{super-democracy parameter},
\[
\sdem_m=\sdem_m[\XB,\XX]=\sup\left\{\frac{\Vert \Ind_{\varepsilon,A} \Vert}{\Vert \Ind_{\delta,B} \Vert}\colon |A|=|B|\le m, \varepsilon\in\EE^A,\; \delta\in\EE^B\right\},
\]
where
\[
\Ind_{\varepsilon,A}=\Ind_A[\XB,\XX]=\sum_{n\in A} \varepsilon_n \, \xx_n,
\]
so that $\XB$ is \emph{super-democratic} if $\sup_m \sdem_m<\infty$. 

Although the parameters $(\sdem_m)_{m=1}^{\infty}$ are one step up in the scale of symmetry, in practice they do not provide asymptotically better estimates than $(\dem_m)_{m=1}^{\infty}$ for $(\leb_m)_{m=1}^{\infty}$. Indeed, with a smaller constant $C_1$ and a larger constant $C_2$, we have
\[
\frac{1}{C_2} \max\left\{ \unc_m, \sdem_m\right\} \le \leb_m \le C_1 \unc_m \, \sdem_m, \quad m\in\NN
\]
(cf. \cite{BBG2017}*{Proposition 1.1}).

Another condition related to symmetry which has been successfully implemented in the theory is the so-called symmetry for largest coefficients. Let
\[
\supp(f)=\{ n\in\NN \colon \xx_n^*(f) \not=0\}
\]
denote the support of $f\in\XX$ with respect to the basis $\XB$. We define
\[
\slc_m=\slc_m[\XB,\XX]= \sup\frac{\Vert \Ind_{\varepsilon,A} +f \Vert}{ \Vert \Ind_{\delta,B} +f \Vert},
\]
the supremum being taken over all finite subsets $A$, $B$ of $\NN$ with $ |A|=|B|\le m$, all signs $\varepsilon\in\EE^{A}$ and $\delta\in\EE^{B}$, and all $f\in \XX$ with $\Vert f\Vert_{\infty}\le 1$ and $\supp(f)\cap (A\cup B)=\emptyset$.
A basis $\XB$ is \emph{symmetric for largest coefficients} (SLC for short) if $\sup_m \slc_m<\infty$. Imposing the extra assumption $A\cap B=\emptyset$ in the definition, we obtain the `disjoint' counterpart of the SLC parameters, herein denoted by $\slcd_m=\slcd_m[\XB,\XX]$.

Bases with $\sup_m \slc_m=1$ were first considered in \cite{AW2006}, where they were called bases with \emph{property (A)}. The parameters that quantify the symmetry for largest coefficients of a basis appear naturally when looking for estimates for the Lebesgue constants that are close to one (see \cites{AW2006,DOSZ2011,DKOSZ2014,AAW2018b}). In fact, if we put
\[
A_p=(2^p-1)^{1/p}, \quad 0<p\le 1,
\]
and $\XX$ is a $p$-Banach space, 
\begin{equation}\label{eq:LebUncSLC}
\max\left\{ \uncc_m, \slcd_m\right\} \le \leb_m \le A_p^2 \uncc_{2m} \slcd_m, \quad m\in\NN,
\end{equation}
(see \cite{BBG2017}*{Proposition 1.1} and \cite{AABW2019}*{Theorem 7.2}).

Thus, since $\slcd_m\le \slc_m \le (\slcd_m)^2$, in the case when $\XX$ is a Banach space, $\Ct_g=1$ if and only if $\uncc_m=\slc_m=1$ for all $m\in\NN$. Taking into account that for some constant $C$,
\[
\slc_m\le C \slcd_m,\quad m\in \NN,\]
(see Proposition~\ref{prop:SLCDisjoint} below),
equation \eqref{eq:LebUncSLC} yields
\[
\frac{1}{ C_1} \max\left\{ \unc_m, \slc_m \right\} \le \leb_m \le C_2 \unc_m \, \slc_m, \quad m\in\NN,
\]
for some constants $C_1$ and $C_2$. Thus, again, the attempt to obtain better asymptotic estimates for $(\leb_m)_{m=1}^{\infty}$ using parameters larger than $(\dem_m)_{m=1}^{\infty}$ is futile.

The vestiges of symmetry found in some bases can also be measured qualitatively by means of the upper and lower democracy functions, and using the concept of dominance between bases.

The \emph{upper super-democracy function}, also known as \emph{fundamental function}, of a basis $\XB$ of a quasi-Banach space $\XX$ is defined as
\begin{equation}\label{eq:udf}
\usdf(m)=\usdf[\XB,\XX](m)=\sup\left\lbrace \left\Vert \Ind_{\varepsilon,A} \right\Vert \colon |A|\le m,\, \varepsilon\in\EE^A \right\rbrace, \quad m\in\NN,
\end{equation}
while the \emph{lower super-democracy function} of $\XB$ is
\[
\lsdf(m)=\lsdf[\XB,\XX](m)=\inf\left\lbrace \left\Vert \Ind_{\varepsilon,A} \right\Vert \colon |A|= m,\, \varepsilon\in\EE^A \right\rbrace, \quad m\in\NN.
\]

A basis $\YB=(\yy_n)_{n=1}^\infty$ of a quasi-Banach space $\YY$ is said to \emph{dominate} a basis $\XB=(\xx_n)_{n=1}^\infty$ of a quasi-Banach space $\YY$ if there is a bounded linear map $S\colon\XX\to\YY$ such that $S(\xx_n)=\yy_n$ for all $n\in\NN$. If $\Vert S\Vert \le D$, we will say that $\YB$ $D$-dominates $\XB$.

Roughly speaking, we are interested in bases that can be `squeezed' (using domination) between two symmetric bases with equivalent fundamental functions.

\begin{definition}
A basis $\XB$ of a quasi-Banach space $\XX$ is said to be \emph{squeeze symmetric} if there are quasi-Banach spaces $\XX_1$ and $\XX_2$ with symmetric bases $\XB_1$ and $\XB_2$ respectively such that
\begin{enumerate}[label=(\roman*), leftmargin=*]
\item\label{it:sq1} $\usdf[\XB_1,\XX_1]\le \lsdf[\XB_2,\XX_2]$,
\item\label{it:sq2} $\XB_1$ dominates $\XB$, and
\item\label{it:sq3} $\XB$ dominates $\XB_2$.
\end{enumerate}
\end{definition}
This condition guarantees in a certain sense the optimality of the compression algorithms with respect to the basis (see \cite{Donoho1993}). For an approach to squeeze symmetric bases from this angle, we refer the reader to \cites{AlbiacAnsorena2016,Woj2000,CDVPX1999}. Squeeze symmetry also serves in some situations as a tool to derive other properties of the bases like being quasi-greedy for instance (see \cites{KoTe1999,DKK2003, BBG2017, BBGHO2018, AADK2019b,Woj2003}).

Although it was not originally given this name, squeeze symmetry was introduced in \cite{AABW2019} to give relief to a feature that has been implicit in greedy approximation with respect to bases since the early stages of the theory. The techniques developed in \cite{AABW2019}*{\S9} show that squeeze symmetric bases are closely related to embeddings involving Lorentz sequence spaces. Before giving a precise formulation of this connection, we recall that if a basis $\XB=(\xx_n)_{n=1}^\infty$ of $\XX$ is $1$-symmetric, i.e., \eqref{eq:symbis} holds with $C_1=C_2=1$, then $\usdf[\XB,\XX]=\lsdf[\XB,\XX]$. Note also that every symmetric basis is $1$-symmetric under a suitable renorming of the space; thus, there is no real restriction in assuming that all symmetric bases are $1$-symmetric.

Recall that a weight $\prim=(s_m)_{m=1}^\infty$ is the \emph{primitive weight} of a weight $\ww=(w_n)_{n=1}^\infty$, in which case we say that $\ww$ is the \emph{discrete derivative} of $\prim$, if $s_m=\sum_{n=1}^m w_n$ for all $m\in\NN$.

\begin{theorem}[see \cite{AABW2019}*{Equation (9.4), Lemma 9.3 and Theorem 9.12}]\label{thm:SSChar}
Let $\XX$ be a quasi-Banach space with a basis $\XB$. Let $\ww$ be the discrete derivative of the fundamental function of $\XB$.
\begin{enumerate}[label=(\alph*), leftmargin=*, widest=a]
\item $\XB$ is squeeze symmetric if and only if the coefficient transform defines a bounded linear operator from $\XX$ into $d_{1,\infty}(\ww)$.
\item Set $
\Gamma=\inf D_1 D_2$, where the infimum is taken over all $1$-symmetric bases $\XB_1$ and $\XB_2$ of quasi-Banach spaces $\XX_1$ and $\XX_2$ such that $\XB_1$ $D_1$-dominates $\XB$, $\XB$ $D_2$-dominates $\XB_2$, and
$
\usdf[\XB_1,\XX_1]\le\usdf[\XB_2,\XX_2].
$
Then there are constants $C_1$ and $C_2$ depending only on the modulus of concavity of $\XX$ such that
\[
\frac{1}{C_2} \Gamma \le \Vert \Fou\Vert_{\XX\to d_{1,\infty}(\ww)} \le C_2 \Gamma.
\]
\end{enumerate}
\end{theorem}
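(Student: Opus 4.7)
My plan is to prove part~(b) directly, from which part~(a) follows at once: finiteness of $\Gamma$ forces $\Fou$ to be bounded via the quantitative inequality, while conversely boundedness of $\Fou$ will be shown to produce explicit squeezing witnesses. The core argument therefore consists of a two-sided estimate between $\Gamma$ and $\Vert\Fou\Vert_{\XX\to d_{1,\infty}(\ww)}$, which I would split into two essentially independent halves.

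For the upper bound $\Vert\Fou\Vert\le C_2\Gamma$, I would fix an arbitrary pair $\XB_1,\XB_2$ of $1$-symmetric witnesses with domination constants $D_1,D_2$. Given $f\in\XX$ with coefficient sequence $(a_n):=\Fou(f)$, the domination of $\XB_2$ by $\XB$ gives $\Vert\sum_n a_n\xx_n^{(2)}\Vert_{\XX_2}\le D_2\Vert f\Vert$. The $1$-symmetry of $\XB_2$, combined with the passage from symmetry to $1$-unconditionality valid in quasi-Banach spaces up to a constant depending on the modulus of concavity, yields the pointwise lower estimate $\Vert\sum_n a_n\xx_n^{(2)}\Vert_{\XX_2}\ge c\sup_m a_m^{*}\,\usdf[\XB_2,\XX_2](m)$, where $(a_m^{*})$ is the non-increasing rearrangement of $(|a_n|)$; this step uses projection onto the $m$ largest coordinates and monotonicity of the quasi-norm under shrinking of coefficients. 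Finally, the domination of $\XB$ by $\XB_1$ together with condition~(i) and the identity $\usdf=\lsdf$ for $1$-symmetric bases yields $\udf(m)\le D_1\usdf[\XB_1,\XX_1](m)\le D_1\usdf[\XB_2,\XX_2](m)$. Combining the three pieces returns $\Vert\Fou(f)\Vert_{d_{1,\infty}(\ww)}=\sup_m a_m^{*}\udf(m)\le CD_1D_2\Vert f\Vert$, and taking the infimum over admissible pairs closes this half.

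For the lower bound $\Gamma\le C_2\Vert\Fou\Vert$ I would exhibit explicit witnesses. I would take $\XB_2$ to be the canonical unit vector basis of $d_{1,\infty}(\ww)$; this basis is $1$-symmetric, and because the decreasing rearrangement of $\Ind_A$ with $|A|=m$ is the indicator of $\{1,\dots,m\}$, its quasi-norm equals $\sum_{k=1}^m w_k=\udf(m)$, so $\usdf[\XB_2,\XX_2]=\lsdf[\XB_2,\XX_2]=\udf$. The hypothesis that $\Fou$ maps $\XX$ boundedly into $d_{1,\infty}(\ww)$ with norm $\kappa$ is precisely the statement that $\XB$ $\kappa$-dominates $\XB_2$. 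For $\XB_1$ I would use the symmetric hull of $\XB$: the canonical basis of the sequence space endowed with the norm $\Vert(a_n)\Vert_{\XX_1}:=\sup_{\pi,\eta}\Vert\sum_n a_n\eta_n\xx_{\pi(n)}\Vert_{\XX}$, the supremum running over all permutations $\pi$ of $\NN$ and all $\eta\in\EE^{\NN}$. By construction $\XB_1$ is $1$-symmetric, $1$-dominates $\XB$, and satisfies $\usdf[\XB_1,\XX_1](m)=\udf(m)$, so condition~(i) holds with equality; hence $\Gamma\le D_1D_2=\kappa=\Vert\Fou\Vert$.

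The main obstacle I anticipate is the lower estimate for the $\XX_2$-norm by the weak Lorentz quasi-norm in the first half. In the Banach setting this is a one-line consequence of coordinate projection onto the largest $m$ entries of a $1$-unconditional basis, but in the quasi-Banach setting one must pass carefully from $1$-symmetry to a usable form of $1$-unconditionality, picking up a factor that depends on the modulus of concavity and is exactly what produces the constant $C_2$ in the statement. A secondary point, routine but worth verifying explicitly, is that $d_{1,\infty}(\ww)$ really is a quasi-Banach space whose canonical basis is $1$-symmetric with fundamental function $\udf$; this follows from the standard theory of weighted weak Lorentz sequence spaces.
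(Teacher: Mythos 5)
Note first that the paper does not actually prove Theorem~\ref{thm:SSChar}: it is imported verbatim from \cite{AABW2019} (Equation (9.4), Lemma 9.3 and Theorem 9.12), so there is no in-text argument to compare against. Judged on its own, your reconstruction is essentially correct and follows the natural (and, as far as I can tell, the intended) route: the upper bound $\Vert\Fou\Vert\le C\,\Gamma$ by passing through an arbitrary pair of witnesses and using that a $1$-symmetric basis of a $p$-Banach space is lattice-unconditional up to a constant $A_p$ coming from the sign-averaging argument of Lemma~\ref{lem:SignVsChar}, and the lower bound by exhibiting the symmetric hull of $\XB$ on the $\XX_1$ side and a weak Lorentz space on the $\XX_2$ side. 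Both witnesses check out: the symmetric hull is a $1$-symmetric quasi-norm (a pointwise supremum of $p$-seminorms is a $p$-seminorm, finite on $c_{00}$ because $\usdf[\XB,\XX](m)<\infty$), it $1$-dominates $\XB$, and its fundamental function equals that of $\XB$, so condition (i) holds with equality.

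Two points need repair, though neither is fatal. First, a notational one that matters in this paper: the weight $\ww$ is the discrete derivative of the \emph{fundamental function} $\usdf[\XB,\XX]$, so $\Vert\Fou(f)\Vert_{d_{1,\infty}(\ww)}=\sup_m \dr_m(f)\,\usdf[\XB,\XX](m)$; what you denote $\udf$ is, in the paper's notation of Section~\ref{examplesdeleixample}, the sign-free upper democracy function, which can be strictly smaller. Your argument is correct once every occurrence of $\udf$ is read as $\usdf$. Second, and more substantively: the unit vector system is a ($1$-symmetric) basis only of the \emph{separable part} of $d_{1,\infty}(\ww)$, i.e.\ the closed linear span of $(\ee_n)_{n=1}^\infty$, not of the whole space, and for a general $f\in\XX$ the sequence $\Fou(f)$ need not belong to that closed span (take $\dr_m(f)\approx 1/s_m$). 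So the assertion that boundedness of $\Fou$ ``is precisely'' the statement that $\XB$ $\kappa$-dominates $\XB_2$ does not literally parse with the paper's definition of domination, which requires a map into the space spanned by $\XB_2$. The fix is standard: define the domination map as the continuous extension of $\Fou$ restricted to $\spn(\xx_n\colon n\in\NN)$; this extension automatically takes values in the closed span of the unit vectors, and its norm coincides with $\Vert\Fou\Vert_{\XX\to d_{1,\infty}(\ww)}$ because the $d_{1,\infty}(\ww)$ quasi-norm is lower semicontinuous under coordinatewise convergence. With these adjustments, and after the routine verification that $\prim=\usdf[\XB,\XX]$ is doubling (so that $d_{1,\infty}(\ww)$ is indeed a quasi-Banach space), your proof of (b) is complete, and (a) follows as you indicate, modulo the harmless normalization that symmetric witnesses may be assumed $1$-symmetric after renorming and rescaling.
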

Theorem~\ref{thm:SSChar} serves as motivation to define a new kind of Lebesgue parameters associated to an arbitrary basis $\XB$, which will eventually be key in solving Temlyakov's problem.

For each $f\in\XX$, let $(\dr_m(f))_{m=1}^\infty$ be the non-increasing rearrangement of $|\Fou(f)|$. Note that if we put
\[
\fss_m= \fss_m[\XB,\XX]=\sup_{f\in\XX\setminus\{0\}} \frac{\dr_m(f)}{\Vert f\Vert}, \quad m\in\NN,
\]
then the norm of the coefficient transform as an operator from $\XX$ into $d_{1,\infty}(\ww)$ is $\sup_m \fss_m \usdf(m)$.
So, for $m\in\NN$ we define the \emph{$m$th squeeze symmetry parameter} of $\XB$,
\[
\sq_m=\sq_m[\XB,\XX]= \fss_m[\XB,\XX] \usdf[\XB,\XX](m).
\]

We can also approach the definition of the squeeze symmetry parameters from a functional angle. If for $A\subseteq\NN$ finite,
we denote by $\Fou_A$ the projection of the coefficient transform onto $A$,
\[
\Fou_A\colon\XX \to \FF^\NN, \quad f\mapsto \Fou(f) \chi_A,
\]
then
\[
\sq_m[\XB,\XX] = \sup \{ \Vert \Fou_A\Vert_{\XX\to d_{1,\infty}(\ww)} \colon |A|\le m\}.
\]

Using that $(\dr_m(f))_{m=1}^\infty$ is non-increasing for all $f\in\XX$ yields the properties of $(\sq_m)_{m=1}^{\infty}$ gathered in the following lemma for further reference.

\begin{lemma}\label{lem:SSParameters}
Let $\XB=(\xx_n)_{n=1}^\infty$ be a basis of a quasi-Banach space $\XX$ with coordinate functionals $(\xx_n^*)_{n=1}^\infty$, and let $m\in\NN$.
\begin{enumerate}[label=(\roman*), leftmargin=*, widest=ii]
\item\label{SSParameters:1} $1/\fss_m$ is the infimum value of $\Vert f\Vert$, where $f$ runs over all functions in $\XX$ with
\[
|\{n\in\NN\colon |\xx_n^*(f)|\ge 1\}|\ge m.
\]
\item\label{SSParameters:2}
$\sq_m[\XB,\XX]$ is the smallest constant $C$ such that $t \Vert \Ind_{\varepsilon,A} \Vert \le C \Vert f\Vert$ whenever $t\in[0,\infty)$, $f\in\XX$, $A\subseteq\NN$ and $\varepsilon\in\EE^A$, satisfy
\[
|A|\le\min\{ m,|\{n\in\NN\colon |\xx_n^*(f)|\ge t\}|\}.
\]
\item\label{SSParameters:3}
$\sq_m[\XB,\XX]$ is the smallest constant $C$ such that
\[
\min_{n\in B} |\xx_n^*(f)| \, \Vert \Ind_{\varepsilon,A} \Vert \le C \Vert f\Vert
\]
whenever $A\subseteq\NN$, $\varepsilon\in\EE^A$, and $B$ greedy set of $f\in\XX$ satisfy $|A| = |B|\le m$.
\end{enumerate}
In particular, the sequence of coefficients $(\fss_m)_{m=1}^\infty$ is non-increasing and $(\sq_m)_{m=1}^\infty$ is non-decreasing.
\end{lemma}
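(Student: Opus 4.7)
The strategy is to unpack the definition $\sq_m = \fss_m\,\usdf(m)$ via the elementary equivalence
\[
\dr_k(f) \ge t \iff |\{n : |\xx_n^*(f)| \ge t\}| \ge k,
\]
which allows one to move between the ``non-increasing rearrangement'' and the ``super-level-set'' descriptions of the coefficients. All three claims then follow from a single computation, viewed through different lenses.

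Item \ref{SSParameters:1} is essentially a restatement: by positive homogeneity of the coefficient transform, $\fss_m = \sup_{f\ne 0} \dr_m(f)/\|f\|$ equals $1/\inf\{\|f\| : \dr_m(f) \ge 1\}$—the infimum being reached, up to rescaling by $\dr_m(f)$, by the same functions that achieve the supremum—and the constraint $\dr_m(f) \ge 1$ is precisely $|\{n : |\xx_n^*(f)| \ge 1\}| \ge m$.

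For \ref{SSParameters:2}, let $C_m$ denote the optimal constant. The inequality $\sq_m \le C_m$ follows by plugging in $t = \dr_m(f)$ together with arbitrary $A$, $\varepsilon$ satisfying $|A| \le m$: the constraint $|A| \le |\{n : |\xx_n^*(f)| \ge \dr_m(f)\}|$ holds automatically because at least $m$ coefficients are $\ge \dr_m(f)$. Taking suprema over $A$, $\varepsilon$, $f$ yields $\fss_m\,\usdf(m) = \sq_m \le C_m$. Conversely, $C_m \le \sq_m$ is obtained via the chain
\[
t\,\|\Ind_{\varepsilon,A}\| \le \dr_{|A|}(f)\,\usdf(|A|) \le \fss_{|A|}\,\usdf(|A|)\,\|f\| = \sq_{|A|}\,\|f\| \le \sq_m\,\|f\|,
\]
using $t \le \dr_{|A|}(f)$ (from the \ref{SSParameters:2} constraint), $\|\Ind_{\varepsilon,A}\| \le \usdf(|A|)$, and the monotonicity $\sq_{|A|} \le \sq_m$ for $|A| \le m$. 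Item \ref{SSParameters:3} is then the extremal case of \ref{SSParameters:2}: for a greedy set $B$ of $f$ of size $|A|$ the quantity $\min_{n \in B}|\xx_n^*(f)|$ equals $\dr_{|A|}(f)$, the largest value of $t$ compatible with the \ref{SSParameters:2} constraint, so both statements determine the same optimal constant.

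The monotonicity claims then drop out at once: $(\fss_m)$ is non-increasing because each sequence $(\dr_m(f))_{m=1}^\infty$ is, and $(\sq_m)$ is non-decreasing because the set of admissible quadruples $(t,f,A,\varepsilon)$ in \ref{SSParameters:2} enlarges with $m$. The principal obstacle is the circular appearance of the non-decreasing property of $(\sq_j)$ inside the proof of $C_m \le \sq_m$. I would break the circle by first establishing \ref{SSParameters:2} with $\max_{k \le m}\sq_k$ in place of $\sq_m$—an unconditional inequality—or equivalently by invoking the alternative characterization $\sq_m = \sup\{\|\Fou_A\|_{\XX \to d_{1,\infty}(\ww)} : |A| \le m\}$ already recorded before the lemma, whose monotonicity in $m$ is transparent, and only then identifying this quantity with $\fss_m\,\usdf(m)$ to close the argument.
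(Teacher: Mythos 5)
The paper gives no written proof of this lemma (it is dispatched with the single remark that $m\mapsto\dr_m(f)$ is non-increasing), so the only issue is whether your argument is sound. Part \ref{SSParameters:1}, the inequality $\sq_m\le C_m$ in part \ref{SSParameters:2} (plugging in $t=\dr_m(f)$), and the reduction of \ref{SSParameters:3} to \ref{SSParameters:2} via $\min_{n\in B}|\xx_n^*(f)|=\dr_{|B|}(f)$ are all correct. The problem is the circularity you yourself flagged, because the repair you propose does not close it. Your chain establishes, unconditionally, that the optimal constant in \ref{SSParameters:2} equals $\max_{k\le m}\fss_k\,\usdf(k)=\max_{k\le m}\sq_k$; to replace this by $\sq_m$ you must prove that $k\mapsto\fss_k\,\usdf(k)$ is non-decreasing. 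Passing to $\sup_{|A|\le m}\Vert\Fou_A\Vert_{\XX\to d_{1,\infty}(\ww)}$ buys nothing: if you actually compute that supremum (the rearrangement of $\Fou(f)\chi_A$ is dominated termwise by $(\dr_n(f))_n$ and vanishes beyond $|A|$, with equality when $A$ is a greedy set), you get $\max_{k\le m}\fss_k\,\usdf(k)$ again. Thus ``identifying this quantity with $\fss_m\usdf(m)$'' is precisely the unproved monotonicity, not a route to it.

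That monotonicity is moreover not a formal consequence of $\dr_m(f)$ being non-increasing: $\fss_m$ is non-increasing while $\usdf(m)$ is non-decreasing, and their product can genuinely decrease under the paper's standing hypotheses. For instance, in $c_0$ take $\xx_1=M^{-1}\ee_1$, $\xx_2=M\ee_2$ and $\xx_n=\ee_n$ for $n\ge 3$, with $M>1$; this is a bounded unconditional basis with bounded dual basis ($\unc_1=1$, $\dem_1=M^2$). Here $\fss_1=\Vert\xx_1^*\Vert=M$ and $\usdf(1)=\Vert\xx_2\Vert=M$, so $\sq_1=M^2$, whereas every coefficient except the first satisfies $|\xx_n^*(f)|\le\Vert f\Vert_\infty$, so $\fss_2=1$, $\usdf(2)=M$ and $\sq_2=M<\sq_1$; correspondingly the optimal constant in \ref{SSParameters:3} for $m=2$ is $M^2$, not $\sq_2$. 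So the final identification step in your proposal cannot be carried out as a formality: either the lemma must be read with $\sq_m$ redefined as $\max_{k\le m}\fss_k\usdf(k)$ (equivalently, as $\sup_{|A|\le m}\Vert\Fou_A\Vert$, which is what every subsequent application of the lemma actually needs), or an additional normalization-type hypothesis must be imposed and the inequality $\fss_k\usdf(k)\le\fss_m\usdf(m)$ for $k\le m$ proved explicitly. What your argument correctly proves is the version of \ref{SSParameters:2} and \ref{SSParameters:3} with $\max_{k\le m}\sq_k$ in place of $\sq_m$; you should stop there or supply the missing step.
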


\subsection{Other parameters of bases associated with the TGA}
In greedy approximation with respect to bases it is also of interest to compare the error $\Vert f-\GG_{m}(f)\Vert$ with the best error in the approximation of $f$ by $m$-term coordinate projections. Thus, given a basis $\XB$ of a quasi-Banach space $\XX$, for $m\in\NN$ we put
\[
\widetilde{\sigma}_{m}[\XB, \XX](f):=\widetilde{\sigma}_{m}(f) = \inf\{ \Vert f - S_A(f) \Vert \colon |A| =m\},
\]
and define the \emph{$m$th almost greedy constant} as
\[
\leba_m=\leba_m[\XB,\XX]=\sup\left\{ \frac{\Vert f-\GG_{m}(f)\Vert}{\widetilde{\sigma}_{m}(f)} \colon f\in\XX\setminus\Sigma_m\right\}.
\]
By definition \cite{DKKT2003}, the basis $\XB$ is \emph{almost greedy} if and only if $\sup_m \leba_m<\infty$.

We say that $A$ is a \emph{greedy set} of $f\in\XX$ (relative to the basis $\XB$) if
\[
|\xx_n^*(f)|\ge |\xx_k^*(f)|, \quad n\in A,\; k\in\NN\setminus A,
\]
in which case $S_A(f)$ is called a \emph{greedy sum of order $m :=|A|$} of $f$. The greedy sums of a function $f$ need not be unique. In this regard, the thresholding greedy algorithm $(\GG_{m}(f))_{m=1}^\infty$ is a natural way to construct for each $m$ a greedy sum of $f$. Indeed, we can use the natural ordering of $\NN$ to recursively define for each $f\in\XX$ and $m\in\NN$ a greedy set $A_m(f)$ of cardinality $m$ as follows. Assuming that $A_{m-1}(f)$ is defined we set
\[
\textstyle
k(A,m)=\min\{ k \in \NN\setminus A_{m-1}(f) \colon |\xx_k^*(f)|= \max_{n\notin A_{m-1}(f)} |\xx_n^*(f)| \},
\]
and $A_m(f)=A_{m-1}(f)\cup \{ k(A,m)\}$. With this agreement, we have
\[
\GG_{m}[\XB, \XX](f):=\GG_{m}(f) = S_{A_m(f)}(f), \quad f\in \XX, \; m\in\NN.
\]

From the continuity of the quasi-norm and a standard perturbation technique we infer that the Lebesgue constant $\leb_m$ is the smallest constant $C$ such that
\[
\Vert f-S_A(f)\Vert \le C \sigma_m(f), \quad f\in\XX,\; \text{$A$ greedy set of $f$},\; |A|\le m,
\]
and the almost greedy constant $\leba_m$ is the optimal constant $C$ such that
\begin{equation}\label{eq:AGImproved}
\Vert f-S_A(f)\Vert \le C \widetilde{\sigma}_k(f), \; \text{$A$ greedy set of $f$},\; k\le |A|\le m,
\end{equation}
(see \cite{AlbiacAnsorena2017b}*{Lemma 2.2} and \cite{AABW2019}*{Lemma 6.1}). Consequently, the sequences $(\leb_m)_{m=1}^\infty$ and $(\leba_m)_{m=1}^\infty$ are non-increasing.

The \emph{$m$th quasi-greedy constant} $\qg_m$ and its complemented counterpart $\qgc_m$ are defined by
\[
\qg_m=\sup_{1\le k \le m} \Vert \GG_{k}\Vert,
\quad \qgc_m=\sup_{1\le k \le m} \Vert \Id_\XX- \GG_{k}\Vert.
\]
Similarly to the unconditionality parameters, in the case when $\XX$ is a $p$-Banach space, these parameters are related by the inequalities
\[
(\qg_m)^p\le 1+(\qgc_m)^p
\;
\text{and}\;(\qgc_m)^p\le 1+(\qg_m)^p.
\]
The same perturbation technique used before gives that $\qgc_m$ is the smallest constant $C$ such that
\begin{equation*}
\left\Vert f-S_A(f)\right\Vert \le C \left\Vert f \right\Vert,\; \text{$A$ greedy set of $f$},\; |A|\le m,
\end{equation*}
and the quasi-greedy constant $\qg_m$ is the smallest constant $C$ such that
\begin{equation}\label{eq:BDProj}
\left\Vert S_A(f)\right\Vert \le C \left\Vert f \right\Vert,\; \text{$A$ greedy set of $f$},\; |A|\le m.
\end{equation}

If we impose \eqref{eq:BDProj} only to functions $f$ such that $|\Fou(f)|$ is constant on $A$, we will denote by $\qglc_m$ the corresponding parameter and will call it the \emph{$m$th quasi-greedy parameter for largest coefficients}, or $m$th QGLC parameter for short. Finally, the \emph{$m$th uncoditionality parameter for constant coefficients}, or $m$th UCC parameter for short, denoted $\ucc_m$, is defined by imposing condition \eqref{eq:BDProj} only to functions $f$ with $|\Fou(f)|$ constant and $|\supp(f)|\le m$. A basis is quasi-greedy for largest coefficients (resp.\ uncoditional for constant coefficients) if the corresponding sequence of parameters is uniformly bounded.

Note that the parameters $(\dem_m)_{m=1}^\infty$, $(\sdem_m)_{m=1}^\infty$, $(\slc_m)_{m=1}^\infty$, $(\slcd_m)_{m=1}^\infty$, $(\qg_m)_{m=1}^\infty$, $(\qgc_m)_{m=1}^\infty$, $(\qglc_m)_{m=1}^\infty$, and $(\ucc_m)_{m=1}^\infty$, as well as the function $\usdf$ are non-decreasing by definition. Again, standard approximation arguments give that we can equivalently define all the above parameters by restricting us to finitely supported functions. Using this it is easy to infer that $(\unc_m)_{m=1}^\infty$, and $(\uncc_m)_{m=1}^\infty$ also are non-decreasing.

\subsection{Organization of the article.}
With all the previous ingredients we are now in a position to describe in some detail the contents of this paper. After setting the most heavily used terminology and notation in Section~\ref{term}, in Section~\ref{sect:Main} we prove the central result of this paper:
\begin{theorem}\label{thm:main}
Let $\XB$ be a basis of a quasi-Banach space $\XX$. There are constants $C_1$ and $C_2$ (depending only on the modulus of concavity of $\XX$) such that
\[
\frac{1}{C_1} \leb_m[\XB,\XX] \le \max\{\sq_m[\XB,\XX], \unc_m[\XB,\XX]\}\le C_2 \, \leb_m[\XB,\XX], \quad m\in\NN.
\]
\end{theorem}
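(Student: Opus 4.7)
The inequality consists of two parts, which I would address separately. For the lower bound $\max\{\sq_m,\unc_m\}\le C_2\leb_m$, the estimate $\unc_m\le C\leb_m$ is already contained in \eqref{eq:UncG}. To bound $\sq_m$ by $\leb_m$, I would use the characterization in Lemma~\ref{lem:SSParameters}\ref{SSParameters:3}. Given $f\in\XX$, a greedy set $B$ of $f$ of size $m$ with $t=\min_{n\in B}|\xx_n^*(f)|$, a set $A\subseteq\NN$ with $|A|=m$, and $\varepsilon\in\EE^A$, the plan is to construct a test function $h=f+t\Ind_{\varepsilon,A}$; after standard perturbations to ensure $A\cap\supp(f)=\emptyset$ and strict dominance of $B$, the set $B$ becomes a greedy set of $h$ of size $m$. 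Using $g_0=t\Ind_{\varepsilon,A}\in\Sigma_m$ as a competitor for the best approximation, one has $\sigma_m(h)\le\Vert h-g_0\Vert=\Vert f\Vert$ and $h-S_B(h)=(f-S_B(f))+t\Ind_{\varepsilon,A}$, so the Lebesgue inequality combined with the $p$-triangle estimate yields $t\Vert\Ind_{\varepsilon,A}\Vert\le C(\leb_m+\uncc_m)\Vert f\Vert\le C'\leb_m\Vert f\Vert$ (using \eqref{kmkmc} and the already established $\unc_m\le C\leb_m$). The case $A\cap B\ne\emptyset$ can be handled by first splitting $A$ into its intersection with $B$ and its complement, replacing the former by an auxiliary set disjoint from $A\cup B$, and combining the resulting estimates via the $p$-triangle inequality.

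For the upper bound $\leb_m\le C_1\max\{\sq_m,\unc_m\}$, let $A$ be a greedy set of $f$ with $|A|\le m$ and $t=\min_{n\in A}|\xx_n^*(f)|$. Pick $g\in\Sigma_m$ with $\Vert f-g\Vert$ arbitrarily close to $\sigma_m(f)$ and set $D=\supp(g)$. The decomposition $f-S_A(f)=(\Id-S_A)(f-g)+(g-S_A(g))$ together with the modulus of concavity gives $\Vert f-S_A(f)\Vert\le K(\uncc_m\Vert f-g\Vert+\Vert S_{D\setminus A}(g)\Vert)$. Writing $S_{D\setminus A}(g)=S_{D\setminus A}(f)-S_{D\setminus A}(f-g)$ and bounding $\Vert S_{D\setminus A}(f-g)\Vert\le\unc_m\Vert f-g\Vert$ reduces everything to the key estimate
\[
\Vert S_{D\setminus A}(f)\Vert\le C\max\{\sq_m,\unc_m\}\Vert f-g\Vert.
\]
The coefficients $\xx_n^*(f)$ on $D\setminus A$ have modulus at most $t$ and $|D\setminus A|\le m$; moreover, the indices in $A\setminus D$ are disjoint from $\supp(g)$, so there $\xx_n^*(f-g)=\xx_n^*(f)$ has modulus $\ge t$. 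Applying Lemma~\ref{lem:SSParameters}\ref{SSParameters:2} to $f-g$ then produces the ``dual'' estimate $t\Vert\Ind_{\varepsilon,E}\Vert\le\sq_m\Vert f-g\Vert$ for every $\varepsilon$ and every $E\subseteq\NN$ with $|E|\le|A\setminus D|$, and in particular $t\,\usdf(|D\setminus A|)\le\sq_m\Vert f-g\Vert$.

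The heart of the matter, and what I expect to be the main technical obstacle, is refining the key estimate from a product bound to the $\max$ bound. A dyadic level-set decomposition $D\setminus A=\bigsqcup_{j\ge 0}F_j$ according to the size of $|\xx_n^*(f)|$, combined with the multiplier inequality that controls $\Vert\sum_{n\in F_j}\xx_n^*(f)\xx_n\Vert$ by $t\,2^{-j}\Vert\Ind_{\sgn \xx^*(f),F_j}\Vert$ at the cost of a factor $\unc_m/A_p$, and with the estimate $t\,2^{-j}\usdf(|F_j|)\le 2^{-j}\sq_m\Vert f-g\Vert$, summarily yields (after a geometric summation in $j$) a \emph{product} bound $\Vert S_{D\setminus A}(f)\Vert\le C\unc_m\sq_m\Vert f-g\Vert$; this alone would only prove $\leb_m\le C\unc_m\sq_m$. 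To recover the sharper inequality $\leb_m\le C(\sq_m+\unc_m)$ (which is equivalent to the $\max$ form), I would split $D\setminus A$ into a ``flat'' part where the coefficients are comparable to $t$ and the bound $t\,\usdf(|D\setminus A|)\le\sq_m\Vert f-g\Vert$ yields the $\sq_m$ contribution without a multiplier loss, and a ``tail'' part where the $2^{-j}$ decay in the dyadic argument is strong enough that the inevitable $\unc_m$ factor from the multiplier gets absorbed into the overall $\unc_m$ term of the final estimate. Executing this delicate splitting to turn the product into an additive (hence $\max$) bound is the principal difficulty of the proof.
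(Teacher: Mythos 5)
Your overall architecture matches the paper's (two separate directions, $\unc_m\lesssim\leb_m$ taken from \eqref{eq:UncG}, the remaining work split between $\leb_m\lesssim\max\{\sq_m,\unc_m\}$ and $\sq_m\lesssim\leb_m$), and several ingredients are correct, but there are two genuine gaps.

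The more serious one is in the upper bound, where you stop short of a proof. You correctly reduce matters to estimating $\Vert S_{D\setminus A}(f)\Vert$ and you correctly extract from Lemma~\ref{lem:SSParameters}~\ref{SSParameters:2}, applied to $f-g$, the estimate $t\,\Vert \Ind_{\varepsilon,E}\Vert\le \sq_m\Vert f-g\Vert$ for all signs $\varepsilon$ and all $E$ with $|E|\le|A\setminus D|$. At that point you are essentially done: since $|\xx_n^*(f)|\le t$ for every $n\in D\setminus A$ (because $A$ is a greedy set of $f$), Lemma~\ref{lem:SignVsChar} yields
\[
\Vert S_{D\setminus A}(f)\Vert\le A_p\sup_{\varepsilon\in\EE^{D\setminus A}} t\,\Vert \Ind_{\varepsilon,D\setminus A}\Vert\le A_p\,\sq_m\,\Vert f-g\Vert .
\]
The passage from arbitrary coefficients of modulus at most $t$ to $t$ times a sign-indicator sum costs only the universal constant $A_p$ (it is the convexity argument of \cite{AABW2019}*{Corollary 2.3}), \emph{not} a factor of $\unc_m$ as your last paragraph assumes. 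Hence there is no product bound to repair: the dyadic level-set decomposition is unnecessary, and the ``delicate splitting'' you single out as the principal difficulty --- and never actually carry out --- is a non-issue. This is exactly the paper's argument; the only cosmetic difference is that the paper packages your terms $(\Id-S_A)(f-g)$ and $S_{D\setminus A}(f-g)$ into the single expression $(f-g)-S_{A\cup D}(f-g)$, bounded by $\uncc_{2m}\Vert f-g\Vert$.

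The second gap is in the inequality $\sq_m\lesssim\leb_m$. Your test function $h=f+t\Ind_{\varepsilon,A}$ works when $A\cap\supp(f)=\emptyset$ and gives $(\sqd_m)^p\le(\leb_m)^p+(\uncc_m)^p$, i.e., it controls only the \emph{disjoint} parameter $\sqd_m$. For general $A$, your plan of replacing $A\cap B$ by an auxiliary set $E$ disjoint from $A\cup B\cup\supp(f)$ forces you to compare $\Vert\Ind_{\varepsilon,A\cap B}\Vert$ with $\Vert\Ind_{\delta,E}\Vert$, and that comparison costs another factor of a democracy-type parameter --- this is precisely the content of Lemma~\ref{lem:SLCSSD}, which only gives $\sq_m\le(\sqd_m)^2$ --- so your route ends in a quadratic rather than linear bound in $\leb_m$. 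The paper avoids this by perturbing $f$ \emph{subtractively} on $A\setminus B$: with $z=\sum_{n\in A\setminus B}a_n\varepsilon_n(f)\xx_n$ and $0\le a_n\le t$, the set $B$ remains a greedy set of $g=f-z$ and $-z\in\Sigma_m$ serves as the competitor, so $\Vert g-S_B(g)\Vert\le\leb_m\Vert f\Vert$; combined with the truncation estimate $\Vert f-S_B(f-y)\Vert\le A_p\uncc_m\Vert f\Vert$ and Lemma~\ref{lem:SignVsChar}, this controls $t\Vert\Ind_{\varepsilon,A}\Vert$ for arbitrary $A$ with a single linear factor of $\max\{\uncc_m,\leb_m\}$.
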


In Section~\ref{sect:Main} we will also show the almost greedy analogue of Theorem~\ref{thm:main}, which in this case involves the almost greedy constants, the quasi-greedy parameters and some relatives of the squeeze symmetry parameters (see Theorem~\ref{thm:MainNotMain}). In Sections~\ref{sect:Applications}, \ref{sect:SQUnc}, and \ref{Applications:Duality}, we investigate the theoretical applications of the new Lebesgue parameters and the new Lebesgue-type estimates derived from them. In particular, we compare Theorem~\ref{thm:main} with other bounds for the Lebesgue constants that can be found in the literature. As a matter of fact, as we will see, most known estimates for the Lebesgue parameters can be deduced from Theorem~\ref{thm:main}. For instance, Theorem~\ref{GHOthm} can be deduced from Theorem~\ref{thm:GHOI}. Since the practical interest of Theorem~\ref{thm:main} depends on the ability to estimate the squeeze symmetry parameters, we devote Section~\ref{sect:DLP} to relating these parameters with other parameters that quantify different degrees of democracy. In Section~\ref{examplesdeleixample} we compute the Lebesgue parameters and obtain Lebesgue-type estimates in some important examples.

\section{Terminology and Notation}\label{term}\noindent 
Throughout this paper we use standard facts and notation from Banach spaces and approximation theory (see e.g.\ \cite{AlbiacKalton2016}). The reader will find the required specialized background and notation on greedy-like bases in quasi-Banach spaces in the recent article \cite{AABW2019}; however a few remarks are in order.

Let us first recall that a quasi-Banach space is a vector space $\XX$ over the real or complex field $\FF$ equipped with a map $\|\cdot\|\colon X\to [0,\infty)$, called a \emph{quasi-norm}, which satisfies all the usual properties of the norm with the exception that the triangle law is replaced with the inequality
\begin{equation}\label{defquasinorm}
\|f+g\|\leq \kappa( \| f\| + \|g\|),\quad f,g\in X,
\end{equation}
for some $\kappa\ge 1$ independent of $f$ and $g$, and moreover $(X,\|\cdot\|)$ is complete. The \emph{modulus of concavity} of the quasi-norm is the smallest constant $\kappa\ge 1$ in \eqref{defquasinorm}. Given $0<p\le 1$, a \emph{$p$-Banach space} will be a quasi-Banach space whose quasi-norm is $p$-subadditive, i.e.,
\[
\Vert f+g\Vert^p \le \Vert f\Vert^p +\Vert g \Vert^p, \quad f,g\in\XX.
\]
Any $p$-Banach space has modulus of concavity at most $2^{1/p-1}$. Conversely, by the Aoki-Rolewicz Theorem \cites{Aoki1942,Rolewicz1957}, any quasi-Banach space with modulus of concavity at most $2^{1/p-1}$ is $p$-Banach under an equivalent quasi-norm. So, we will suppose that all quasi-Banach spaces are $p$-Banach spaces for some $0<p\le 1$. As a consequence of this assumption, all quasi-norms will be continuous.

The linear space of all eventually null sequences will be denoted by $c_{00}$, and $c_0$ will be the Banach space consisting of all null sequences. Let $0<q\le \infty$ and $\ww=(w_n)_{n=1}^\infty$ be a \emph{weight}, i.e., a sequence of non-negative scalars with $w_1>0$. Let $(s_m)_{m=1}^\infty$ be the primitive weight of $\ww$. We will denote by $d_{1,q}(\ww)$ the Lorentz sequence space consisting of all $f\in c_0$ whose non-increasing rearrangement $(a_n)_{n=1}^\infty$ satisfies
\[
\Vert f\Vert_{d_{1,q}(\ww)}=\left( \sum_{n=1}^\infty a_n^q s_n^{q-1}w_n\right)^{1/q}<\infty,
\]
with the usual modification if $q=\infty$. If $\prim=(s_m)_{m=1}^\infty$ is \emph{doubling}, i.e., there exists a constant $C$ such that $s_{2m} \le C s_m$ for all $m \in \NN$, $d_{1,q}(\ww)$ is a quasi-Banach space. Moreover, if $\prim$ is doubling, the unit vector system is a $1$-symmetric basis of the its closed linear span, and, if $q<\infty$, the aforementioned closed linear span is $d_{1,q}(\ww)$. Regardless of the value of $q$, the fundamental function of the unit vector system of $d_{1,q}(\ww)$ is of the same order as $(s_m)_{m=1}^\infty$. We refer the reader to \cite{AABW2019}*{\S9.2} for a concise introduction to Lorentz sequence spaces.

The norm of an operator $T$ from a quasi-Banach space $\XX$ into a quasi-Banach space $\YY$ will be denoted by $\Vert T\Vert_{\XX\to\YY}$ or simply $\Vert T\Vert$ if the spaces are clear from context. A sequence $(\xx_n)_{n=1}^\infty$ in a quasi-Banach space $\XX$ is said to be \emph{semi-normalized} if
\[
0 < \inf_{n\in\NN} \Vert \xx_n\Vert \le \sup_{n\in\NN} \Vert \xx_n\Vert<\infty.
\]

We will only deal with biorthogonal systems $(\xx_n,\xx_n^*)_{n=1}^\infty$ for which the democracy and the unconditionality parameters of $\XB=(\xx_n)_{n=1}^\infty$ are finite. Note that
\[
\unc_1[\XB,\XX]=\sup_{n\in\NN} \Vert \xx_n\Vert \, \Vert \xx_n^*\Vert,
\]
and that, if $\unc_1<\infty$, then $\unc_m<\infty$ for all $m\in\NN$. In turn, taking into account that
\begin{equation*}
\dem_1[\XB,\XX] =\frac{\sup_n \Vert \xx_n\Vert}{\inf_n \Vert \xx_n\Vert},
\end{equation*}
we infer that, if $\dem_1$ and $\unc_1$ are both finite, then $\dem_m<\infty$ for all $m\in\NN$. Indeed, if $\XX$ is a $p$-Banach space and $|A|=|B|\le m$,
\[
\frac{\Vert \Ind_A\Vert }{\Vert \Ind_B\Vert }
\le m^{1/p} \frac{ \sup_{n\in A} \Vert \xx_n\Vert}{\inf_{n\in B} \Vert \xx_n\Vert} \inf_{n\in B} \frac{\Vert \xx_n\Vert\, \Vert \xx_n^*\Vert }{ \Vert \Ind_B \Vert\, \Vert \xx_n^* \Vert}
\le m^{1/p} \dem_1 \unc_1.
\]
Finally, we note that $\max\{\dem_1,\unc_1\}<\infty$ if and only if
\begin{equation}\label{eq:finiteparameters}
C[\XB]:=\sup_{n\in\NN} \max\{\Vert \xx_n\Vert, \Vert \xx_n^*\Vert \}<\infty,
\end{equation}
in which case both $\XB$ and $\XB^*=(\xx_n^*)_{n=1}^\infty$ are semi-normalized.

Given a basis $\XB=(\xx_n)_{n=1}^\infty$ of a quasi-Banach space $\XX$, $f\in \XX$ and $f^*\in\XX^*$ we define sequences $\varepsilon(f)$ and $\varepsilon(f^*)$ in $\EE^\NN$ by
\begin{align*}
\varepsilon(f)&=(\varepsilon_n(f))_{n=1}^\infty=( \sgn (\xx_n^*(f)))_{n=1}^\infty,\\
\varepsilon(f^*)&=(\varepsilon_n(f^*))_{n=1}^\infty=( \sgn (f^*(\xx_n))_{n=1}^\infty,
\end{align*}
where $\sgn(0)=1$ and $\sgn(a)=a/|a|$ if $a\not=0$.

Given two sequences of parameters $(\alpha_m)_{m=1}^\infty$ and $(\beta_m)_{m=1}^\infty$ related to bases of quasi-Banach spaces, the symbol $\alpha_m\lesssim C\beta_m$ will mean that for every $0<p\le 1$ there is a constant $C$ such that $\alpha_m[\XB,\XX] \le C \beta_m[\XB,\XX]$ for all $m\in\NN$ and for all bases $\XB$ of a $p$-Banach space $\XX$.

\section{Lower and upper bounds for the Lebesgue constants}\label{sect:Main}\noindent
Konyagin and Temlyakov's characterization of greedy bases in terms of unconditionality and democracy can be reformulated quantitatively by saying that the Lebesgue constants $(\leb_m)_{m=1}^{\infty}$ for the greedy algorithm are bounded above and below by parameters $(\unc_m)_{m=1}^{\infty}$ and $(\dem_m)_{m=1}^{\infty}$ that measure the unconditionality and the democracy of the basis. Since the dependence in \eqref{eq:UncG} is not linear, a natural question raised by Temlyakov \cite{Temlyakov2011}, is to find parameters related to the unconditonality and the democracy of the basis whose maximum value grows as the Lebesgue constant.

Theorem~\ref{thm:main} provides a satisfactory answer to this question by means of the squeeze symmetry parameters. We get started with a lemma which connects some important constants that we will need by using the $p$-convexitity techniques developed in \cite{AABW2019}*{\S2}. The symbol $\FieldC$ stands for $2$ if $\FF=\RR$, and for $4$ if $\FF=\CC$.
\begin{lemma}\label{lem:SignVsChar}
Let $0<p\le 1$, and let $\XB$ be a basis of a $p$-Banach space $\XX$. Given $A\subseteq\NN$ finite, $f\in\XX$, and $\delta=(\delta_n)_{n\in A}\in\EE^A$, we denote
\begin{align*}
K[A,f]&=\textstyle \sup \{ \Vert f+ \sum_{n\in A} a_n\,\xx_n\Vert \colon |a_n|\le 1 \},\\
L[A,f]&=\sup \{ \Vert \Ind_{\varepsilon,A}+f\Vert \colon \varepsilon\in \EE^A \},\\
M[\delta,A,f]&=\textstyle \sup \{ \Vert f+ \sum_{n\in A} a_n\, \delta_n\,\xx_n\Vert \colon 0\le a_n\le 1 \},\quad \text{ and }\\
N[\delta,A,f]&=\sup \{ \Vert \Ind_{\delta,B}+f\Vert \colon B \subseteq A \}.
\end{align*}
Set $C_p=\FieldC^{1/p}$ if $f=0$ and $C_p=(1+\FieldC)^{1/p}$ otherwise.
Then:
\begin{align*}
K[A,f]&\le \min\{C_p\, M[\delta,A,f], A_p\, L[A,f]\} \text{ and }\\
M[\delta,A,f]&\le A_p\, N[\delta,A,f].
\end{align*}
\end{lemma}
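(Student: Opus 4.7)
All three bounds follow from a common strategy: express the ``general'' combination on the left as a weighted sum (or convergent series) of simpler sign-patterned combinations, then apply the $p$-subadditivity $\Vert \sum_k x_k \Vert^p \le \sum_k \Vert x_k \Vert^p$ valid in any $p$-Banach space.

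The bound $M[\delta, A, f] \le A_p N[\delta, A, f]$ uses a binary expansion of the coefficients. Given $0 \le a_n \le 1$, write $a_n = \sum_{k=1}^\infty 2^{-k} \varepsilon_{n, k}$ with $\varepsilon_{n, k} \in \{0, 1\}$, and set $B_k = \{n \in A : \varepsilon_{n, k} = 1\} \subseteq A$, so that $\sum_n a_n \delta_n \xx_n = \sum_{k=1}^\infty 2^{-k} \Ind_{\delta, B_k}$, with convergence in $\XX$ ensured by $p$-subadditivity together with the uniform boundedness of $\Vert \Ind_{\delta, B_k} \Vert$. Since $\sum_{k} 2^{-k} = 1$, one obtains $f + \sum_n a_n \delta_n \xx_n = \sum_{k=1}^\infty 2^{-k}(f + \Ind_{\delta, B_k})$, and $p$-subadditivity gives
\[
\Vert f + \sum_n a_n \delta_n \xx_n \Vert^p \le N[\delta, A, f]^p \sum_{k=1}^\infty 2^{-kp} = \frac{N[\delta, A, f]^p}{2^p - 1},
\]
which yields the desired inequality (the constant arising precisely through the geometric series). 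The bound $K[A, f] \le A_p L[A, f]$ is proved in exactly the same way after replacing the $\{0, 1\}$-valued binary atoms by $\EE$-valued dyadic atoms: in the real case this follows from the binary expansion of $(a_n + 1)/2 \in [0, 1]$; in the complex case one uses the elementary geometric fact that every $z \in \CC$ with $|z| \le 2$ lies within distance $1$ of some point of $\EE$, which iterated yields $a_n = \sum_{k \ge 1} 2^{-k} \eta_{n, k}$ with $\eta_{n, k} \in \EE$, and $\sum_n a_n \xx_n = \sum_k 2^{-k} \Ind_{\eta_k, A}$.

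For the bound $K[A, f] \le C_p M[\delta, A, f]$, I would absorb the sign pattern via $c_n = a_n \bar\delta_n$, so $|c_n| \le 1$ and $\sum_n a_n \xx_n = \sum_n c_n \delta_n \xx_n$. In the real case, write $c_n = c_n^+ - c_n^-$ with $c_n^\pm \in [0, 1]$, so $f + \sum_n a_n \xx_n = (f + T_+) - T_-$ where $T_\pm = \sum_n c_n^\pm \delta_n \xx_n$; then $\Vert f + T_+ \Vert \le M$ directly, while $\Vert T_- \Vert^p \le \Vert f + T_- \Vert^p + \Vert f \Vert^p \le 2 M^p$, using the bound $\Vert f \Vert \le M$ (take $a_n \equiv 0$). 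Applying $p$-subadditivity once more yields $\Vert f + \sum_n a_n \xx_n \Vert^p \le M^p + 2 M^p = (1 + \FieldC) M^p$, as claimed; when $f = 0$ the estimate $\Vert T_- \Vert \le M$ is immediate and gives $\FieldC M^p = 2 M^p$. In the complex case I would further split $c_n = \Re c_n + i \Im c_n$ (with $|\Re c_n|, |\Im c_n| \le 1$) and apply the real-case decomposition separately to the real and imaginary parts, producing four non-negative-coefficient pieces of the form $\sum_n b_n \delta_n \xx_n$ with $b_n \in [0, 1]$, which combined with $f$ and grouped appropriately yield $(1 + \FieldC) M^p$ (resp.\ $\FieldC M^p$) via $p$-subadditivity.

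The main difficulty is matching the stated constant in the complex case of the third bound: a crude four-plus-one term expansion introduces intermediate quantities whose norms, naively bounded using $\Vert f \Vert \le M$ as above, would give $7 M^p$ rather than the asserted $5 M^p$, so a more careful grouping that exploits the disjoint support of $c_n^+$ and $c_n^-$ (and similarly for the imaginary part), and keeps each grouped summand in the form of an $M[\delta, A, f]$-admissible test vector, is required. Apart from this, the only technical point across all three bounds is to justify the convergence in $\XX$ of the binary or $\EE$-dyadic series, which follows from $p$-subadditivity and the summability $\sum_{k \ge 1} 2^{-kp} = (2^p - 1)^{-1} < \infty$.
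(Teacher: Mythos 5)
Your proofs of $M[\delta,A,f]\le A_p\, N[\delta,A,f]$ and $K[A,f]\le A_p\, L[A,f]$ are correct, and they are in substance the argument behind the result the paper simply invokes for these two bounds (\cite{AABW2019}*{Corollary 2.3}): a dyadic expansion of the coefficients, $\{0,1\}$-valued in the first case and $\EE$-valued in the second, followed by $p$-subadditivity and the geometric series $\sum_{k\ge1}2^{-kp}=(2^p-1)^{-1}$. (The constant this produces is $(2^p-1)^{-1/p}$; that is the value $A_p$ must have, since $N\le M$ trivially forces $A_p\ge1$, so the exponent in the paper's displayed formula for $A_p$ should be read with a minus sign.) Your real-case treatment of $K[A,f]\le C_p\,M[\delta,A,f]$ is also correct and, once you unwind the step $\Vert T_-\Vert^p\le\Vert f+T_-\Vert^p+\Vert f\Vert^p$, it is exactly the paper's decomposition $f+T_+-T_-=f+(f+T_+)-(f+T_-)$.

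The genuine gap is the complex case of $K[A,f]\le C_p\,M[\delta,A,f]$, which you flag but do not close; moreover the repair you suggest (exploiting the disjoint supports of $c_n^+$ and $c_n^-$) is not the right mechanism. The correct idea --- and the paper's --- is that the four unimodular prefactors $\gamma_j=i^j$, $j=1,\dots,4$, sum to zero. Writing $a_n\overline{\delta_n}=\sum_{j=1}^{4}\gamma_j a_{j,n}$ with $a_{j,n}\in[0,1]$ (positive and negative parts of the real and imaginary parts), one has
\[
f+\sum_{n\in A}a_n\,\xx_n=f+\sum_{j=1}^{4}\gamma_j\Bigl(f+\sum_{n\in A}a_{j,n}\,\delta_n\,\xx_n\Bigr),
\]
precisely because $\bigl(\sum_{j=1}^{4}\gamma_j\bigr)f=0$. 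Each of the five summands on the right has norm at most $M[\delta,A,f]$, so $p$-subadditivity gives $(1+\FieldC)\,M[\delta,A,f]^p$ at once; when $f=0$ the standalone term disappears and one gets $\FieldC\, M^p$. Your $7M^p$ arises because you strip $f$ from three of the four pieces one at a time, paying an extra $\Vert f\Vert^p$ each time, instead of letting the four extra copies of $f$ cancel globally. The same identity with $\gamma_1=1$, $\gamma_2=-1$ recovers your real-case computation, so nothing else in your argument needs to change.
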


\begin{proof}
In the case when $\FF=\CC$, set $\gamma_j=i^j$ for $j=1$, $2$, $3$, $4$. In the case when $\FF=\RR$, set $\gamma_j=(-1)^j$ for $j=1$, $2$. Given $(a_n)_{n\in A}\in\FF^A$ with $|a_n|\le 1$, there are $(a_{j,n})_{n\in A}\in\FF^A$, $j\in \NN$, $1\le j\le\FieldC$, in $[0,1]$ such that $a_n=\sum_{j=1}^4 \gamma_j a_{j,n}$. The identity
\[
g:= f+ \sum_{n\in A} a_n\,\xx_n=f+ \sum_{j=1}^\FieldC \gamma_j \left( f+ \sum_{n\in A} a_{j,n}\,\xx_n\right)
\]
gives $\Vert g \Vert \le C_p M[\delta,A,f]$. The other inequalities follow readily from \cite{AABW2019}*{Corollary 2.3}.
\end{proof}

\begin{proof}[Proof of Theorem~\ref{thm:main}] Let us first note that by \eqref{kmkmc}, in the proof of the theorem we can replace $\unc_m$ with $\uncc_m$. The inequality
\begin{equation}\label{U:G}
\uncc_m \le \leb_m, \quad m\in\NN,
\end{equation}
can be deduced from the fact that, given $f\in\XX$ and $A\subseteq \NN$ finite, there is $h\in \spn(\xx_n \colon n\in A)$ such that $A$ is a greedy set of $f+h$ (see \cite{GHO2013}). Thus to complete the proof it suffices to show that 
\begin{equation}\label{eq.former}
\leb_m\le C_1 \max\{\sq_m,\uncc_m\}
\end{equation}
and
\begin{equation}\label{eq.later}
\sq_m\le C_2 \max\{\uncc_m,\leb_m\}
\end{equation}
for all $m\in\NN$ and some constants $C_1$ and $C_2$.

To show \eqref{eq.former}, assume that $\XX$ is a $p$-Banach space, $0<p\le 1$. Let $A$ be a greedy set of $f\in\XX$ with $|A|=m<\infty$, and pick $z=\sum_{n\in B}a_n\, \xx_n$ with $\vert B\vert=|A|$. Notice that
\begin{equation*}\label{c1-g}
\max_{n\in B\setminus A}\vert\xx_n^*(f)\vert\le\min_{n\in A\setminus B}\vert\xx_n^*(f)\vert
=\min_{n\in A\setminus B}\vert\xx_n^*(f-z)\vert.
\end{equation*}
Set $k=|B\setminus A|=|A\setminus B|$. On one hand, by Lemma~\ref{lem:SSParameters}~\ref{SSParameters:2} and Lemma~\ref{lem:SignVsChar},
\[
\Vert S_{B\setminus A}(f)\Vert \le A_p \sq_m \Vert f-z\Vert.
\]
On the other hand, since $|A\cup B|=m+k$,
\[
\Vert (f-z) -S_{A\cup B)}(f-z)\Vert\le \uncc_{m+k} \Vert f -z\Vert\le \uncc_{2m}\Vert f -z\Vert.
\]
Since
\[
f- S_A(f) = (f-z)-S_{A\cup B}(f-z) + S_{B\setminus A}(f),
\]
combining both inequalities gives
\begin{align*}
\Vert f- S_A(f)\Vert^p
&\le \left((\uncc_{2m})^p + (A_p \sq_m)^p\right)\Vert f\Vert^p\\
&\le \left( 1+2 (\unc_m)^p + (A_p \sq_m)^p\right)\Vert f\Vert^p.
\end{align*}

Let us now prove inequality \eqref{eq.later}. In order to apply Lemma~\ref{lem:SSParameters}~\ref{SSParameters:3}, we pick $A\subseteq\NN$, $f\in\XX$, and $B$ greedy set of $f$ with $|A|=|B|\le m$. Set $t=\min_{n\in B} |\xx_n^*(f)|$ and pick $(a_n)_{n\in A\cup B}$ in $[0,t]$. Let us put
\[
y=\sum_{n\in B} a_n \, \varepsilon_n(f)\, \xx_n, \quad
z=\sum_{n\in A\setminus B} a_n \, \varepsilon_n(f)\, \xx_n
\]
and $g=f-z$. On the one hand, since $|A\setminus B|\le |B|$, $B$ is a greedy set of $g$, and $g-S_B(g)=f-S_B(f)-z$, we have
\[
\Vert f-S_B(f)-z\Vert\le\leb_m \Vert g+z\Vert=\leb_m \Vert f\Vert.
\]
On the other hand, since $|\xx_n^*(f-y)|\le \xx_n^*(f)|$ and $\sgn(\xx_n^*(f-y))=\sgn(\xx_n^*(f))$ for all $n\in B$, applying \cite{AABW2019}*{Corollary 2.3} yields
\[
\left\Vert f-S_{B}(f) +y \right\Vert
=\left\Vert f-S_{B}(f-y) \right\Vert
\le A_p \uncc_m \Vert f \Vert.
\]
Combining both estimates we obtain
\[
\left\Vert y-z \right\Vert \le \left((A_p\, (\uncc_m)^p + (\leb_m)^p\right)^{1/p} \Vert f \Vert.
\]
Hence, by Lemma~\ref{lem:SignVsChar},
\[
\left\Vert \sum_{n\in A\cup B} b_n\, \xx_n \right\Vert \le \FieldC^{1/p} \left( (A_p\, \uncc_m)^p + (\leb_m)^p\right)^{1/p}\Vert f\Vert.
\]
Applying this inequality with $b_n=0$ for all $n\in B\setminus A$ and $|b_n|=1$ for all $n\in A$, we are done.
\end{proof}

Since a basis is almost greedy if and only if it is quasi-greedy and democratic (see \cite{DKKT2003}*{Theorem 3.3} and \cite{AABW2019}*{Theorem 6.3}), it seems natural to look for democracy-like parameters which, when combined with the quasi-greeediness parameters, provide optimal bounds for the growth of the almost greedy constants. For this purpose, we define the \emph{disjoint squeeze symmetry parameter} $\sqd_m=\sqd_m[\XB,\XX]$ as the smallest constant $C$ such that
\[
\min_{n\in B} |\xx_n^*(f)| \, \Vert \Ind_{\varepsilon,A} \Vert \le C \Vert f\Vert
\]
whenever $A\subseteq\NN$, $\varepsilon\in\EE^A$, and $B$ is a greedy set of $f\in\XX$ satisfy $A\cap\supp(f)=\emptyset$ and $|A| = |B|\le m$. The following estimates imply that a basis is squeeze symmetric if and only if $\sup_m \sqd_m<\infty$.



\begin{lemma}\label{lem:SLCSSD}
Let $\XB$ be a basis of a quasi-Banach space $\XX$. Then
\[
\sqd_m[\XB,\XX] \le \sq_m[\XB,\XX] \le (\sqd_m[\XB,\XX])^2.
\]
\end{lemma}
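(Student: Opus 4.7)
The first inequality $\sqd_m \le \sq_m$ is immediate: the configurations $(f, A, \varepsilon, B)$ defining $\sqd_m$ form a subclass of those defining $\sq_m$ (namely, those with the extra disjointness $A \cap \supp(f) = \emptyset$). The substantive direction $\sq_m \le (\sqd_m)^2$ I would prove by ``factoring through a disjoint configuration''. Fix admissible data for $\sq_m$: a function $f\in\XX$, sets $A, B \subseteq \NN$ with $|A| = |B| \le m$, signs $\varepsilon \in \EE^A$, with $B$ a greedy set of $f$, and write $t = \min_{n \in B}|\xx_n^*(f)|$. The plan is to introduce an auxiliary set $A' \subseteq \NN$ with $|A'| = |A|$ that is disjoint from both $\supp(f)$ and $A$, together with an arbitrary sign vector $\varepsilon' \in \EE^{A'}$, and then apply the definition of $\sqd_m$ twice.

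First, with the original greedy set $B$ of $f$ and the disjoint set $A'$, one obtains
\[
t \, \Vert \Ind_{\varepsilon', A'}\Vert \le \sqd_m \, \Vert f\Vert.
\]
Second, with the new function $h := \Ind_{\varepsilon',A'}$, for which $A'$ itself is a greedy set of cardinality $|A|$ with $\min_{n \in A'}|\xx_n^*(h)| = 1$, and with the original $A, \varepsilon$ --- the required disjointness $A \cap \supp(h) = A \cap A' = \emptyset$ holds by construction --- one obtains
\[
\Vert \Ind_{\varepsilon,A}\Vert \le \sqd_m \, \Vert \Ind_{\varepsilon', A'}\Vert.
\]
Multiplying the second estimate by $t$ and chaining with the first yields $t\,\Vert \Ind_{\varepsilon, A}\Vert \le (\sqd_m)^2 \Vert f\Vert$, which by Lemma~\ref{lem:SSParameters}~\ref{SSParameters:3} is the desired bound.

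The main obstacle is simply ensuring that such an $A'$ exists: if $\supp(f)$ happens to be cofinite in $\NN$, the complement $\NN \setminus (\supp(f) \cup A)$ may not contain $|A|$ elements. I would dispatch this with a standard preliminary reduction to finitely supported $f$: replacing $f$ by its truncation $f_N = \sum_{n=1}^N \xx_n^*(f)\,\xx_n$ for $N$ large enough to contain $B$ preserves $B$ as a greedy set of $f_N$ with the same minimum value $t$ (since for $k \notin B$ one has $|\xx_k^*(f_N)| \le |\xx_k^*(f)| \le t$), while $\Vert f_N\Vert \to \Vert f\Vert$ by the basis assumption and continuity of the quasi-norm. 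Once $f$ has finite support, $A'$ always exists and the two-step argument goes through cleanly.
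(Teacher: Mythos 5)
Your two-step factoring argument is exactly the paper's proof: the paper records the same intermediate fact --- for disjoint sets $A$ and $A'$ of equal cardinality at most $m$, one has $\Vert \Ind_{\varepsilon,A}\Vert \le \sqd_m \Vert \Ind_{\varepsilon',A'}\Vert$, obtained by applying the definition of $\sqd_m$ to $h=\Ind_{\varepsilon',A'}$ with greedy set $A'$ --- and then chains it with the defining inequality for $\sqd_m$ applied to the triple $(f,B,A')$, invoking Lemma~\ref{lem:SSParameters}~\ref{SSParameters:3} to conclude, just as you do. The one step that is not quite right is your reduction to finitely supported $f$: you truncate via the partial sums $f_N=\sum_{n=1}^N \xx_n^*(f)\,\xx_n$ and assert $\Vert f_N\Vert\to\Vert f\Vert$ ``by the basis assumption,'' but the bases in this paper are only norm-bounded fundamental biorthogonal systems, not Schauder bases, so $f_N$ need not converge to $f$ at all --- the trigonometric system in $L_1(\TT)$, one of the paper's main examples, is a case in point. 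The standard repair, and what the paper means by ``we can restrict ourselves to finitely supported vectors,'' is to approximate $f$ in quasi-norm by a finitely supported $g$ (using density of $\spn(\xx_n\colon n\in\NN)$) and perturb slightly so that $B$ remains a greedy set of $g$ with minimum coefficient arbitrarily close to $t$; with that substitution your argument is complete and coincides with the paper's.
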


\begin{proof}
Lemma~\ref{lem:SSParameters}~\ref{SSParameters:3} yields the left hand-side inequality. Let $A$ and $B$ be disjoint sets with $|A|=|B|=m$. Then
\[
\Vert \Ind_{\varepsilon,A}\Vert \le \sqd_m\Vert \Ind_{\delta,B}\Vert
\]
for all $\varepsilon\in\EE^A$ and $\delta\in\EE^B$. Since we can restrict ourselves to finitely supported vectors, combining this fact with Lemma~\ref{lem:SSParameters}~\ref{SSParameters:3} yields the right hand-side inequality.
\end{proof}

\begin{theorem}\label{thm:MainNotMain}
Let $\XB$ be a basis of a quasi-Banach space $\XX$. There are constants $C_1$ and $C_2$ depending only on the modulus of concavity of $\XX$ such that
\[
\frac{1}{C_1} \leba_m[\XB,\XX] \le \max\{\sqd_m[\XB,\XX], \qg_m[\XB,\XX]\}\le C_2 \leba_m[\XB,\XX], \quad m\in\NN.
\]
\end{theorem}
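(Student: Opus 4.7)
The plan is to mirror the proof of Theorem~\ref{thm:main} under the dictionary $\uncc_m\mapsto\qg_m$, $\sq_m\mapsto\sqd_m$, and $\leb_m\mapsto\leba_m$. The starting point is the reformulation \eqref{eq:AGImproved}, which identifies $\leba_m$ with the smallest constant $C$ such that $\Vert f-S_A(f)\Vert\le C\widetilde{\sigma}_k(f)$ whenever $A$ is a greedy set of $f$ and $k\le|A|\le m$.

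For the lower bound $\qg_m\lesssim\leba_m$, I would first apply \eqref{eq:AGImproved} with $k=0$ (so that $\widetilde{\sigma}_0(f)=\Vert f\Vert$) to obtain $\qgc_m\le\leba_m$, and then invoke the standard inequality $(\qg_m)^p\le 1+(\qgc_m)^p$. For $\sqd_m\lesssim\leba_m$, I would use a perturbation trick: given $A,B\subseteq\NN$, $\varepsilon\in\EE^A$ with $|A|=|B|\le m$, $B$ a greedy set of $f$, and $A\cap\supp(f)=\emptyset$, put $t=\min_{n\in B}|\xx_n^*(f)|$, pick $t'$ with $\max_{k\notin B}|\xx_k^*(f)|<t'<t$ (perturbing $f$ slightly to break ties if necessary), and consider the auxiliary function
\[
g=f+t'\,\Ind_{\varepsilon,A}.
\]
By design, $B$ is the greedy set of $g$ of cardinality $|B|$ selected by the TGA, so $\GG_{|B|}(g)=S_B(f)$, while $\widetilde{\sigma}_{|B|}(g)\le\Vert g-S_A(g)\Vert=\Vert f\Vert$. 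Applying the almost greedy bound to $g$ and combining it with $\Vert f-S_B(f)\Vert\le\qgc_m\Vert f\Vert\le\leba_m\Vert f\Vert$ via $p$-subadditivity yields $t'\Vert\Ind_{\varepsilon,A}\Vert\le 2^{1/p}\leba_m\Vert f\Vert$; letting $t'\to t^{-}$ closes the case.

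For the upper bound $\leba_m\lesssim\max\{\qg_m,\sqd_m\}$, take $A$ a greedy set of $f$ with $|A|=m$ and $B\subseteq\NN$ with $|B|=m$, and set $g=f-S_B(f)$. The key algebraic identity is
\[
f-S_A(f)=g+S_{B\setminus A}(f)-S_{A\setminus B}(g).
\]
Because the coordinates of $g$ vanish on $B$, one checks directly that $A\setminus B$ is a greedy set of $g$, whence $\Vert S_{A\setminus B}(g)\Vert\le\qg_m\Vert g\Vert$. For the middle term, I would apply $\sqd_m$ to $g$ with $A'=B\setminus A$ (disjoint from $\supp(g)$) and $B'=A\setminus B$ (greedy set of $g$, of the same cardinality as $A'$ since $|A|=|B|$), obtaining $t\Vert\Ind_{\eta,B\setminus A}\Vert\le\sqd_m\Vert g\Vert$ for every $\eta\in\EE^{B\setminus A}$, where $t=\min_{n\in A\setminus B}|\xx_n^*(f)|$. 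Since the greediness of $A$ forces $|\xx_n^*(f)|\le t$ for all $n\in B\setminus A$, Lemma~\ref{lem:SignVsChar} (via the bound $K\le A_p L$) upgrades this uniform sign estimate to $\Vert S_{B\setminus A}(f)\Vert\le A_p\sqd_m\Vert g\Vert$. Adding the three pieces by $p$-subadditivity and taking the infimum over $B$ produces $\Vert f-S_A(f)\Vert\lesssim\max\{\qg_m,\sqd_m\}\widetilde{\sigma}_m(f)$, as required.

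The main technical obstacle I anticipate is the conversion step just described: one needs to bound the actual coordinate projection $\Vert S_{B\setminus A}(f)\Vert$ rather than merely the sign-symmetric quantity $\sup_{\eta}\Vert\Ind_{\eta,B\setminus A}\Vert$ produced by $\sqd_m$, which is precisely why Lemma~\ref{lem:SignVsChar} (moving between coefficient sums and signed characteristic sums) is indispensable. A secondary but more routine subtlety is the handling of ties in the TGA for the perturbed function $g$ in the lower-bound argument, which forces the strict separation $\max_{k\notin B}|\xx_k^*(f)|<t'<t$ together with a limiting step.
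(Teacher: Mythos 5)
Your proposal is correct and follows essentially the same route as the paper: the lower bound for $\sqd_m$ via the auxiliary function $f+t\,\Ind_{\varepsilon,A}$ together with $\qgc_m\le\leba_m$, and the upper bound via the decomposition of $f-S_A(f)$ using $g=f-S_B(f)$, the observation that $A\setminus B$ is a greedy set of $g$ with $B\setminus A$ disjoint from $\supp(g)$, and Lemma~\ref{lem:SignVsChar} to pass from signed indicator sums to the projection $S_{B\setminus A}(f)$. The only differences are cosmetic: the paper works with $\qgc_m$ and a two-term splitting where you use $\qg_m$ and three terms, and it avoids your $t'\to t^-$ perturbation by applying \eqref{eq:AGImproved} to an arbitrary greedy set rather than the TGA-selected one.
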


\begin{proof}
Assume that $\XX$ is a $p$-Banach space, $0<p\le 1$. Inequality \eqref{eq:AGImproved} yields $\qgc_m \le \leba_m$ (see \cite{BBG2017}*{Proposition 1.1}). To conclude the proof of the right-side estimate, we pick $A\subseteq\NN$, $\varepsilon\in\EE^A$, $f\in\XX$, and $B$ greedy set of $f$ with $|A|=|B|\le m$ and $A\cap \supp(f)=\emptyset$. Set $t=\min_{n\in B} |\xx_n^*(f)|$. Taking into account that $B$ is a greedy set of $g:=f+ t \Ind_{\varepsilon,A}$ and that $t\Ind_{\varepsilon, A}= g-S_B(g) - (f-S_B(f))$ we obtain
\begin{align*}
t \Vert \Ind_{\varepsilon,A} \Vert^p
&\le \Vert g- S_B(f)\Vert^p + \Vert f - S_B(f)\Vert^p \\
&\le (\leba_m)^p \Vert g- S_A(g)\Vert^p + (\qgc_m)^p \Vert f \Vert^p\\
&= ((\leba_m)^p+(\qgc_m)^p)\Vert f\Vert^p.
\end{align*}

To prove the left-side estimate, we pick a greedy set $A$ of $f\in\XX$ and $B\subseteq \NN$ with $|A|=|B|= m$. Notice that $|A\setminus B|=|B\setminus A|\le m$, that $A\setminus B$ is a greedy set of $g:=f-S_B(f)$, that
\[
g-S_{A\setminus B}(g)=f-S_{A\cup B}(f)\; \text{ and }\; f -S_A(f)= f-S_{A\cup B}(f) + S_{B\setminus A}(f),
\]
and that
\[
\max_{n\in A\setminus B} |\xx_n^*(f)|\le \min_{n\in B\setminus A} \xx_n^*(g)|.
\]
Hence, applying Lemma~\ref{lem:SignVsChar} we obtain
\[
\Vert f -S_A(f) \Vert^p = \Vert f-S_{A\cup B}(f)\Vert^p + \Vert S_{B\setminus A}(f) \Vert^p\le C^p \Vert g\Vert^p,
\]
where $C^p=(\qgc_m)^p + (A_p \sqd_m)^p$. Consequently, $\leba_m \le C$.
\end{proof}

\section{Lebesgue-type inequalities for truncation quasi-greedy bases}\label{sect:Applications}\noindent
While, a priori, democracy and unconditionality are independent properties of each other, in practice, no matter how we upgrade the democracy of a basis, it ends up having some traits in common with unconditionality. For instance, super-democratic bases share with unconditional bases the feature of being unconditional for constant coefficients (UCC for short); and, the other way around, a basis is super-democratic if and only if it is simultaneously democratic and UCC (see \cite{DKK2003}). Similarly, a basis is SLC if and only if it is democratic and QGLC (see \cite{AABW2019}*{Proposition 5.3}). In turn, quasi-greediness for constant coefficients is a weakened form of quasi-greediness, which can be considered as a weakened form of unconditionality. Finally, almost greediness can be considered a `very demanding' form of democracy, and the class of almost greedy bases overlaps with the class of unconditional bases in the class of quasi-greedy bases.

The overlapping between squeeze symmetry and unconditionality can also be identified. For that, let us first introduce the corresponding unconditionality-like property.

Given a basis $\XB$ is a quasi-Banach space $\XX$, and $A\subseteq\NN$ finite, we consider the non-linear operator
\[
\RTO_A(f)=\RTO_A[\XB,\XX](f) = \min_{n\in A} |\xx_n^*(f)| \Ind_{\varepsilon(f),A}, \quad f\in\XX.
\]
Now, for $m\in\NN$, we define the \emph{$m$th restricted truncation operator} of the basis $\XB$ as
\[
\RTO_m\colon\XX\to \XX, \quad f\mapsto \RTO_{A_m(f)}(f),
\]
and the \emph{$m$th-truncation quasi-greedy parameter} as
\[
\rto_m=\rto_m[\XB,\XX] =\sup_{1\le k\le m} \Vert \RTO_k\Vert.
\]
Those bases for which the restricted truncations operators are uniformly bounded, i.e., $\sup_m \rto_m<\infty$, will be called \emph{truncation quasi-greedy}. A standard approximation argument gives
\[
\rto_m=\sup\left\{ \frac{ \Vert \RTO_A(f) \Vert}{\Vert f \Vert } \colon A \text{ greedy set of } f\in\XX\setminus\{0\},\; |A|\le m\right\}.
\]
Thus, truncation quasi-greedy bases are QGLC. Quantitatively,
\[
\qglc_m[\XB,\XX] \le \rto_m[\XB,\XX], \quad m\in\NN.
\]
In turn, quasi-greedy basis are truncation quasi-greedy. This result, and the quantitative estimates associated with it, deserve a more detailed explanation. If $\XX$ is a Banach space, for $m\in\NN$,
\begin{equation}\label{eq:RTOQG:1}
\rto_m[\XB,\XX]\le \qg_m[\XB,\XX],
\end{equation}
(see the proofs of \cite{Woj2000}*{Theorem 3}, \cite{DKKT2003}*{Lemma 2.2} or \cite{AlbiacAnsorena2017b}*{Theorem 2.4}). However, the argument that shows inequality \eqref{eq:RTOQG:1} does not transfer to nonlocally convex quasi-Banach spaces. The authors circumvented in \cite{AABW2019} the use of convexity at the cost of getting worse estimates. In fact, if $\XX$ is a $p$-Banach space, $0<p<1$, the proof of \cite{AABW2019}*{Theorem 4.8} gives
\[
\rto_m\le \qg_m \eta_p(\qg_m),\quad fm\in\NN,
\]
where $\eta_p$ is the function defined in \cite{AABW2019}*{Equation (4.5)}. Hence (see \cite{AABW2019}*{Remark 4.9}) there is a constant $C$ (independent of $p$) such that
\begin{equation}\label{eq:RTOQG:p}
\rto_m[\XB,\XX]\le C (\qg_m[\XB,\XX])^{1+1/p},\; m\in\NN,\; 0<p<1,\; \text{$\XX$ $p$-Banach}.
\end{equation}

With an eye to relating the truncation quasi-greedy and democracy parameters with the squeeze symmetric parameters, we write down a general lemma that will be useful in applications.

\begin{lemma}\label{lem:SSDemRTO}
Let $0<p,q\le 1$. Assume that a basis $\YB$ of a $q$-Banach space $\YY$ $C$-dominates a basis $\XB$ of a $p$-Banach space $\XX$. Then
\[
\sq_m[\XB,\XX] \le \FieldC^{1/p+1/q} A_p A_q C \theta_m \rto_m[\YB,\YY] , \quad m\in\NN,
\]
where
\[
\theta_m=\sup_{1\le |A|=|B| \le m} \frac{\Vert \Ind_A[\XB,\XX]\Vert}{\Vert \Ind_B[\YB,\YY]\Vert}.
\]
\end{lemma}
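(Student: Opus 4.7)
The plan is to apply the characterization of $\sq_m[\XB,\XX]$ provided by Lemma~\ref{lem:SSParameters}~\ref{SSParameters:3}, so it suffices to show that whenever $A\subseteq\NN$, $\varepsilon\in\EE^A$, and $B$ is a greedy set of $f\in\XX$ with $|A|=|B|\le m$, the inequality
\[
t\,\|\Ind_{\varepsilon,A}[\XB,\XX]\|_\XX\le \FieldC^{1/p+1/q}A_pA_q C\theta_m\,\rto_m[\YB,\YY]\,\|f\|_\XX
\]
holds, where $t=\min_{n\in B}|\xx_n^*(f)|$. The proof has two ingredients: a functional reduction to the space $\YY$ using the dominance together with the truncation quasi-greedy property of $\YB$, and a combinatorial comparison of signed indicators using Lemma~\ref{lem:SignVsChar} together with $\theta_m$.

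For the functional step, let $S\colon\XX\to\YY$ be the bounded linear map realizing the dominance, so that $S(\xx_n)=\yy_n$ and $\|S\|\le C$. Biorthogonality of both bases combined with the continuity of $S$ gives $\yy_n^*(S(f))=\xx_n^*(f)$ for every $n\in\NN$, and therefore $B$ is a greedy set of $S(f)$ in $\YY$ with $\min_{n\in B}|\yy_n^*(S(f))|=t$, while $\varepsilon(S(f))$ agrees with $\varepsilon(f)$ on $B$. Applying the definition of $\rto_m[\YB,\YY]$ to $S(f)$ with greedy set $B$ then yields
\[
t\,\|\Ind_{\varepsilon(f),B}[\YB,\YY]\|_\YY=\|\RTO_B(S(f))\|_\YY\le \rto_m[\YB,\YY]\,\|S(f)\|_\YY\le C\,\rto_m[\YB,\YY]\,\|f\|_\XX.
\]

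For the combinatorial step, we chain three inequalities linking $\|\Ind_{\varepsilon,A}[\XB,\XX]\|_\XX$ to $\|\Ind_{\varepsilon(f),B}[\YB,\YY]\|_\YY$. First, Lemma~\ref{lem:SignVsChar} in the $p$-Banach space $\XX$, applied with $f=0$ and constant signs $\delta=1$, replaces the signed indicator on $A$ by a subset supremum of unsigned indicators and produces
\[
\|\Ind_{\varepsilon,A}[\XB,\XX]\|_\XX\le \FieldC^{1/p}A_p\sup_{A'\subseteq A}\|\Ind_{A'}[\XB,\XX]\|_\XX.
\]
Second, for each $A'\subseteq A$ we pick any $B'\subseteq B$ with $|B'|=|A'|$ and invoke the definition of $\theta_m$ to obtain $\|\Ind_{A'}[\XB,\XX]\|\le \theta_m\|\Ind_{B'}[\YB,\YY]\|$, whence
\[
\sup_{A'\subseteq A}\|\Ind_{A'}[\XB,\XX]\|_\XX\le \theta_m\sup_{B'\subseteq B}\|\Ind_{B'}[\YB,\YY]\|_\YY.
\]
Third, Lemma~\ref{lem:SignVsChar} in the $q$-Banach space $\YY$, applied with $f=0$ and $\delta=\varepsilon(f)|_B$, reabsorbs the subset supremum against the signed indicator $\|\Ind_{\varepsilon(f),B}[\YB,\YY]\|$ at the cost of the factor $\FieldC^{1/q}A_q$.

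The delicate point is the third step: Lemma~\ref{lem:SignVsChar} naturally yields a supremum over subsets $B'\subseteq B$ on the right-hand side, which must be closed up against the single full signed indicator on $B$. This will be achieved by combining the two inequalities of Lemma~\ref{lem:SignVsChar} so that, in the $q$-Banach space $\YY$, signed sums over subsets $B'\subseteq B$ are dominated by $\|\Ind_{\varepsilon(f),B}[\YB,\YY]\|$ up to the contribution of $A_q$. Multiplying the resulting combinatorial bound by $t$, inserting the functional estimate from the first step, and appealing once more to Lemma~\ref{lem:SSParameters}~\ref{SSParameters:3} will close the proof.
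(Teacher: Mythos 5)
Your overall architecture --- reducing via Lemma~\ref{lem:SSParameters}~\ref{SSParameters:3}, pushing $f$ into $\YY$ through $S$ to exploit $\rto_m[\YB,\YY]$, and using $\theta_m$ together with Lemma~\ref{lem:SignVsChar} to pass from $\Ind_{\varepsilon,A}$ in $\XX$ to indicators over $B$ in $\YY$ --- is the right one, and your first two combinatorial inequalities are correct. The argument breaks at exactly the point you flag as delicate. The inequality you need there,
\[
\sup_{B'\subseteq B}\Vert \Ind_{B'}[\YB,\YY]\Vert \le \FieldC^{1/q}A_q\,\Vert \Ind_{\varepsilon(f),B}[\YB,\YY]\Vert,
\]
is false for general bases and cannot be extracted from Lemma~\ref{lem:SignVsChar}: both estimates of that lemma bound a quantity \emph{from above} by a supremum over subsets (the quantity $N$) or over signs on the full set (the quantity $L$); neither ever collapses a supremum over subsets of $B$ down to the value on the single set $B$. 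Such a collapse is a form of unconditionality for constant coefficients localized at $B$, which is precisely what is not assumed; for instance, for the difference basis of $\ell_p$ one has $\Vert\Ind_{\{1,\dots,2m\}}\Vert=1$ while $\Vert\Ind_{\{2,4,\dots,2m\}}\Vert=(2m)^{1/p}$.

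The structural issue is that you spend the hypothesis $\rto_m[\YB,\YY]$ only once, on the full greedy set $B$ of $S(f)$, and then try to do all of the subset bookkeeping in $\YY$ with Lemma~\ref{lem:SignVsChar} alone. The paper runs these steps in the opposite order: it first combines the truncation quasi-greediness of $\YB$ with Lemma~\ref{lem:SignVsChar} applied in $\YY$ to the set $B$ and the signs $\delta=\varepsilon(S(f))$, obtaining $\Vert\sum_{n\in B}a_n\,\yy_n\Vert\le\FieldC^{1/q}A_q\,\rto_m[\YB,\YY]\,\Vert S(f)\Vert$ for \emph{every} choice of coefficients with $|a_n|\le t$ --- in particular for $a_n=t$ on an arbitrary $B'\subseteq B$ and $a_n=0$ elsewhere. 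Thus each subset of $B$ is controlled directly by $\rto_m[\YB,\YY]\Vert S(f)\Vert$, rather than by $\Vert\Ind_{\varepsilon(f),B}\Vert$. Only afterwards does one match each $E\subseteq A$ with a subset of $B$ of the same cardinality via $\theta_m$, and close with Lemma~\ref{lem:SignVsChar} in $\XX$ exactly as in your first combinatorial inequality. Reorganized this way your proof goes through; as written, the third step is a genuine gap.
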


\begin{proof}
Let $A\subseteq\NN$, $\varepsilon\in\EE^A$, $f\in\XX$ finitely supported, and $B$ a greedy set of $f$ with $|A| = |B|\le m$. Set $t=\min_{n\in B} |\xx_n^*(f)|$. Applying Lemma~\ref{lem:SignVsChar} we obtain
\[
\left\Vert\sum_{n\in B} a_n \, \yy_n \right\Vert\le \FieldC^{1/q} A_q \rto_m[\YB,\YY] \Vert S(f) \Vert, \quad |a_n|\le t.
\]
Hence,
\[
t \Vert \Ind_{E}[\XB,\XX] \Vert\le \FieldC^{1/q} A_q C \theta_m \rto_m[\YB,\YY] \Vert f \Vert, \quad E\subseteq A.
\]
We get the desired inequality by applying again Lemma~\ref{lem:SignVsChar}.
\end{proof}

The quantitative estimates we obtain in Proposition~\ref{prop:SSDemRTO} imply that a basis is squeeze symmetric if and only if it is truncation quasi-greedy and democratic (see \cite{AABW2019}*{Proposition 9.4 and Corollary 9.15}).

\begin{proposition}\label{prop:SSDemRTO}
Let $\XB$ be a basis of a $p$-Banach space, $0<p\le 1$. Then, for all $m\in\NN$,
\begin{equation}\label{eq:SSDemRTO}
\dem_m \le \sqd_m \; \text{ and }\; \rto_m \le \sq_m \le \FieldC^{2/p} A_p^2 \rto_m \dem_m.
\end{equation}
\end{proposition}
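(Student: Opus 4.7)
The plan is to prove the three inequalities separately, using the definition of $\sqd_m$ (for the leftmost), Lemma~\ref{lem:SSParameters}~\ref{SSParameters:3} (for the middle), and the specialization of Lemma~\ref{lem:SSDemRTO} to $\YB = \XB$ (for the rightmost). For $\dem_m \le \sqd_m$, given $A, B \subseteq \NN$ with $|A| = |B| \le m$, I may assume $A \cap B = \emptyset$ via the standard identification of the democracy parameter with its disjoint counterpart. Setting $f = \Ind_B$, the set $B$ is itself a greedy set of $f$ with $\min_{n\in B}|\xx_n^*(f)| = 1$, and $A \cap \supp(f) = \emptyset$ by disjointness. The defining inequality of $\sqd_m$, applied with $\varepsilon \equiv 1$ on $A$, immediately produces $\Vert\Ind_A\Vert \le \sqd_m\,\Vert\Ind_B\Vert$, and taking suprema yields the bound.

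For $\rto_m \le \sq_m$, I take a greedy set $A$ of some $f \in \XX \setminus \{0\}$ with $|A| \le m$, and set $t = \min_{n \in A} |\xx_n^*(f)|$. By the definition of $\RTO_A$, one has $\RTO_A(f) = t\,\Ind_{\varepsilon(f), A}$, so applying Lemma~\ref{lem:SSParameters}~\ref{SSParameters:3} with $B = A$ and sign pattern $\varepsilon = \varepsilon(f)|_A$ gives $\Vert\RTO_A(f)\Vert = t\,\Vert\Ind_{\varepsilon(f), A}\Vert \le \sq_m\,\Vert f\Vert$; passing to the supremum over $f$ and $A$ then yields $\rto_m \le \sq_m$.

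Finally, for $\sq_m \le \FieldC^{2/p}A_p^2\, \rto_m\, \dem_m$, I invoke Lemma~\ref{lem:SSDemRTO} with the trivial self-domination $\YB = \XB$, $\YY = \XX$, $q = p$, and domination constant $C = 1$. In this setting the auxiliary quantity $\theta_m$ appearing in that lemma collapses to $\dem_m[\XB, \XX]$, while the prefactor $\FieldC^{1/p + 1/q} A_p A_q C$ becomes exactly $\FieldC^{2/p} A_p^2$, so the conclusion of the lemma is precisely the stated bound. The only mildly delicate point in the whole proof is the reduction to disjoint pairs $A,B$ in the first inequality, which is a standard observation for democracy-type parameters; the remaining two inequalities are essentially direct applications of machinery already at our disposal.
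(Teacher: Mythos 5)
Your treatment of the second and third inequalities is correct and coincides with what the paper does: $\rto_m\le\sq_m$ is an immediate application of Lemma~\ref{lem:SSParameters}~\ref{SSParameters:3} with $B=A$ and the sign pattern $\varepsilon(f)$, and the bound $\sq_m\le\FieldC^{2/p}A_p^2\,\rto_m\,\dem_m$ is obtained exactly as in the paper, by specializing Lemma~\ref{lem:SSDemRTO} to $\YB=\XB$, $\YY=\XX$, $q=p$ and $C=1$, in which case $\theta_m=\dem_m$ and the prefactor is $\FieldC^{2/p}A_p^2$.

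The proof of the first inequality, however, has a gap exactly at the point you flag as ``mildly delicate''. The definition of $\sqd_m$ requires $A\cap\supp(f)=\emptyset$, so your argument with $f=\Ind_B$ genuinely needs $A\cap B=\emptyset$, and there is no cost-free ``standard identification'' of $\dem_m$ with its disjoint counterpart: the standard disjointification (interpolating through a third set $E$ disjoint from $A\cup B$ with $|E|=|A|$) yields $\Vert\Ind_A\Vert\le\sqd_m\Vert\Ind_E\Vert\le(\sqd_m)^2\Vert\Ind_B\Vert$, i.e.\ only $\dem_m\le(\sqd_m)^2$. This is also what one gets by combining the disjointness-free estimate $\dem_m\le\sq_m$ (Lemma~\ref{lem:SSParameters}~\ref{SSParameters:3} applied directly to $f=\Ind_B$) with Lemma~\ref{lem:SLCSSD}. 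As written, your argument establishes $\Vert\Ind_A\Vert\le\sqd_m\Vert\Ind_B\Vert$ only for disjoint pairs; to obtain the stated $\dem_m\le\sqd_m$ for arbitrary pairs you must either produce an argument that handles the overlap $A\cap B$ without losing a factor, or weaken the claim to the disjoint democracy parameter (or to the squared bound). The paper dismisses this inequality as ``straightforward'' and supplies no argument, so it offers no cover here; the missing step is precisely the substance of the claim.
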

\begin{proof}
Using Lemma~\ref{lem:SSDemRTO} with $\XB=\YB$ and $\XX=\YY$ yields the right hand-side of the second inequality. The other two inequalities are straightforward.
\end{proof}

Proposition~\ref{prop:SSDemRTO} yields in particular that the squeeze symmetry parameters and the democracy parameters of truncation quasi-greedy bases are of the same order. Thus combining that with Theorem~\ref{thm:main} gives the following improvement of Theorem~\ref{GHOthm}.

\begin{theorem}\label{thm:GHOI}
Let $\XB$ be an truncation quasi-greedy basis of a quasi-Banach space $\XX$. There are constants $C_1$ and $C_2$ depending on the modulus of concavity of $\XX$ and the truncation quasi-greedy constant of $\XB$ such that
\[
\frac{1}{C_1}\leb_m \le \max\{\unc_m,\dem_m\} \le C_2 \leb_m, \quad, m\in\NN.
\]
\end{theorem}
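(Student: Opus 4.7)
The plan is to combine Theorem~\ref{thm:main} with the equivalence between $\sq_m$ and $\dem_m$ for truncation quasi-greedy bases supplied by Proposition~\ref{prop:SSDemRTO}. Since Theorem~\ref{thm:main} already bounds $\leb_m$ from above and below (up to constants depending only on the modulus of concavity of $\XX$) by $\max\{\sq_m,\unc_m\}$, the entire task reduces to verifying that, under the truncation quasi-greedy hypothesis, one can replace $\sq_m$ with $\dem_m$ inside the maximum without losing more than a multiplicative constant.

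To that end, I would first record the constant $\Ct_r:=\sup_m \rto_m[\XB,\XX]<\infty$ furnished by the truncation quasi-greedy hypothesis, so that $\rto_m \le \Ct_r$ for every $m\in\NN$. Assuming $\XX$ is a $p$-Banach space (which we may always do by Aoki--Rolewicz), Proposition~\ref{prop:SSDemRTO} yields the two-sided estimate
\[
\dem_m \le \sq_m \le \FieldC^{2/p} A_p^2 \, \rto_m \, \dem_m \le \FieldC^{2/p} A_p^2 \, \Ct_r \, \dem_m, \quad m\in\NN,
\]
where the left inequality follows from $\dem_m\le\sqd_m\le\sq_m$ (using Lemma~\ref{lem:SLCSSD} for the second step). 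Hence $\sq_m$ and $\dem_m$ are of the same order with a constant depending only on $p$ and $\Ct_r$.

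Taking the maximum with $\unc_m$ preserves this two-sided equivalence: on one hand $\max\{\dem_m,\unc_m\} \le \max\{\sq_m,\unc_m\}$, and on the other hand $\max\{\sq_m,\unc_m\} \le \FieldC^{2/p} A_p^2 \Ct_r \max\{\dem_m,\unc_m\}$. Plugging this into Theorem~\ref{thm:main} gives constants $C_1$ and $C_2$, depending only on the modulus of concavity of $\XX$ and on $\Ct_r$, such that
\[
\frac{1}{C_1}\leb_m \le \max\{\unc_m,\dem_m\} \le C_2\,\leb_m, \quad m\in\NN,
\]
which is the desired estimate.

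There is no genuine obstacle here; the only thing to be careful about is tracking how the constants depend on $p$ (and hence on the modulus of concavity through Aoki--Rolewicz) and on the truncation quasi-greedy constant $\Ct_r$, rather than on the full quasi-greedy constant. Observe in particular that we do not need the (generally much stronger) estimate \eqref{eq:RTOQG:p} relating $\rto_m$ and $\qg_m$: the argument uses $\rto_m$ directly, which is exactly what makes Theorem~\ref{thm:GHOI} a genuine strengthening of Theorem~\ref{GHOthm}.
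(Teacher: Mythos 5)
Your argument is correct and is exactly the paper's proof: the paper derives Theorem~\ref{thm:GHOI} by the same one-line combination of Theorem~\ref{thm:main} with Proposition~\ref{prop:SSDemRTO} (via $\dem_m\le\sqd_m\le\sq_m\le \FieldC^{2/p}A_p^2\,\rto_m\,\dem_m$), with the constants depending only on $p$ and the truncation quasi-greedy constant. Your additional bookkeeping of the constants and the remark that \eqref{eq:RTOQG:p} is not needed are accurate.
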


We close this section with the almost greedy counterpart of Theorem~\ref{thm:GHOI}.
\begin{theorem}\label{thm:GHOAGI}
Let $\XB$ be a truncation quasi-greedy basis of a quasi-Banach space $\XX$. There are constants $C_1$ and $C_2$ depending on the modulus of concavity of $\XX$ and the truncation quasi-greedy constant of $\XB$ such that
\[
\frac{1}{C_1}\leba_m \le \max\{\qg_m,\dem_m\} \le C_2 \leba_m, \quad m\in\NN.
\]
\end{theorem}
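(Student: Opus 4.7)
The plan is to deduce the result by combining Theorem~\ref{thm:MainNotMain}, which already provides an optimal sandwich for $\leba_m$ in terms of the pair $(\sqd_m,\qg_m)$, with Proposition~\ref{prop:SSDemRTO} and Lemma~\ref{lem:SLCSSD}, which under the truncation quasi-greedy hypothesis will let us swap $\sqd_m$ for $\dem_m$ up to a multiplicative constant. In effect, the statement is the almost-greedy mirror image of Theorem~\ref{thm:GHOI}, and the same two-step reduction should work verbatim.

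First, from Theorem~\ref{thm:MainNotMain} there are constants $C_1',C_2'$ depending only on the modulus of concavity of $\XX$ with
\[
\frac{1}{C_1'}\leba_m \le \max\{\sqd_m,\qg_m\} \le C_2'\leba_m, \quad m\in\NN.
\]
The left-hand part of \eqref{eq:SSDemRTO} in Proposition~\ref{prop:SSDemRTO} gives $\dem_m\le \sqd_m$, which combined with the right-hand inequality above yields at once
\[
\max\{\dem_m,\qg_m\}\le \max\{\sqd_m,\qg_m\}\le C_2'\leba_m,
\]
i.e., the left inequality in the theorem with $C_1=C_2'$.

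For the reverse direction, invoke Lemma~\ref{lem:SLCSSD} to get $\sqd_m\le \sq_m$ and then the right-hand part of \eqref{eq:SSDemRTO} to obtain, for some constant $K_p$ depending only on $p$,
\[
\sqd_m \le \sq_m \le K_p\, \rto_m\,\dem_m.
\]
Because $\XB$ is truncation quasi-greedy, $\rto:=\sup_m \rto_m<\infty$, so $\sqd_m\le K_p\rto\,\dem_m$. Therefore
\[
\max\{\sqd_m,\qg_m\}\le \max\{K_p\rto\,\dem_m,\qg_m\}\le K_p\rto\,\max\{\dem_m,\qg_m\},
\]
and plugging this into the left-hand inequality of Theorem~\ref{thm:MainNotMain} produces $\leba_m\le C_1'K_p\rto\max\{\dem_m,\qg_m\}$, which is the remaining bound with $C_2=C_1'K_p\rto$. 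Both constants depend, as claimed, only on the modulus of concavity of $\XX$ and on the truncation quasi-greedy constant of the basis.

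The only potentially subtle point is that the right-hand side of \eqref{eq:SSDemRTO} controls $\sq_m$ rather than $\sqd_m$ directly; Lemma~\ref{lem:SLCSSD} is exactly what is needed to pass between the two, and no further work is required. All other steps are purely algebraic manipulations of already established inequalities, so I do not foresee any serious obstacle beyond correctly tracking how the constants assemble.
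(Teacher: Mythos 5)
Your proof is correct and follows exactly the paper's (one-line) argument: combine Theorem~\ref{thm:MainNotMain} with Proposition~\ref{prop:SSDemRTO} (plus $\sqd_m\le\sq_m$ from Lemma~\ref{lem:SLCSSD}), using $\rto_m\le\sup_k\rto_k<\infty$ to absorb the truncation quasi-greedy constant. The only blemish is cosmetic: you have swapped the roles of $C_1$ and $C_2$ relative to the statement (the bound $\max\{\dem_m,\qg_m\}\le C_2'\leba_m$ is the right-hand inequality, and $\leba_m\le C_1'K_p\rto\max\{\dem_m,\qg_m\}$ is the left-hand one), which does not affect the validity of the argument.
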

\begin{proof}
It follows by combining Theorem~\ref{thm:MainNotMain} with Proposition~\ref{prop:SSDemRTO}.
\end{proof}

\section{Squeeze symmetry vs.\ unconditionality}\label{sect:SQUnc}\noindent
Proposition~\ref{prop:SSDemRTO} shows that squeeze symmetry and unconditionality are intertwined. This overlapping can be regarded from a different angle since squeezing a basis $\XB$ between two symmetric bases yields estimates for the unconditionality parameters of $\XB$. To give a precise formulation of this analysis we introduce some additional terminology.

Given two bases $\XB=(\xx_n)_{n=1}^\infty$ and $\YB=(\yy_n)_{n=1}^\infty$ of quasi-Banach spaces $\XX$ and $\YY$, $\dom_m[\XB, \YB]$ will denote for each $m\in \NN$ the smallest constant $C$ such that
\[
\left\Vert\sum_{n\in A} \yy_n^*(f) \, \xx_n\right\Vert \le C \Vert f \Vert, \quad |A|=m, \; f\in\YY.
\]
Notice that $\unc_m[\XB,\XX]=\dom_m[\XB,\XB]$.

Thanks to these parameters we can give an alternative reinterpretation of the fundamental function of a basis $\XB$. In fact, if $\XX$ is a $p$-Banach space, by Lemma~\ref{lem:SignVsChar},
\begin{equation}\label{eq:usdfdoml00}
\usdf[\XB,\XX](m) \le \dom_m[\XB,\ell_\infty] \le A_p\, \usdf[\XB,\XX](m), \quad m\in\NN.
\end{equation}
Here and subsequently, whenever the unit vector system $\BB=(\ee_{n})_{n=1}^{\infty}$ is a basis of a quasi-Banach space $\YY$, we will write $\dom_m[\XB,\YY]$ instead of $\dom_m[\XB,\BB]$; we will proceed analogously when $\XB$ is the unit vector system of $\XX$.

In the case when the bases $\XB$ and $\YB$ are $1$-symmetric or, more generally, $1$-subsymmetric (see \cite{Ansorena2018}), $\dom_m[\XB, \YB]$ is the smallest constant $C$ such that
\[
\left\Vert\sum_{n=1}^m a_n\, \yy_n\right\Vert \le C \left\Vert\sum_{n=1}^m a_n\, \xx_n\right\Vert, \quad a_n\ge 0.
\]

\begin{lemma}\label{lem:domunclocal}
Let $\XB$, $\XB_1$ and $\XB_2$ be bases of a quasi-Banach spaces $\XX$, $\XX_1$, and $\XX_2$ respectively. Suppose that $\XB=(\xx_n)_{n=1}^\infty$ $D$-dominates $\XB_2$. For each $A\subseteq\NN$ finite, let $T_A\colon \XX_1 \to \XX$ be the operator given by
\[
T_A(f)=\sum_{n\in A} a_n\, \xx_n, \quad (a_n)_{n=1}^\infty=\Fou[\XB_1,\XX_1](f).
\]
Set $\zeta_m=\sup_{|A|\le m} \Vert T_A \Vert$. Then, for $m\in\NN$,
\begin{align*}
\unc_m[\XB,\XX] & \le D \zeta_m \dom_m[\XB_1,\XB_2], \text{ and}\\
\sq_m[\XB,\XX]& \le D \zeta_m \sq_m[\XB_1,\XX_1] \, \dom_m[\XB_1,\XB_2].
\end{align*}
\end{lemma}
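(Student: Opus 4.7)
The plan is to factor through the auxiliary bases $\XB_2$ and $\XB_1$ in such a way that the three quantitative inputs supplied by the hypothesis---the domination constant $D$, the transfer constant $\dom_m[\XB_1,\XB_2]$, and the coefficient-reconstruction constant $\zeta_m$---contribute exactly one factor each to the final bound. The chain I follow is $f\in\XX\rightsquigarrow h\in\XX_2\rightsquigarrow g\in\XX_1\rightsquigarrow S_A(f)\in\XX$, where the passage $f\mapsto h$ is supplied by the domination, $h\mapsto g$ by the definition of $\dom_m[\XB_1,\XB_2]$, and $g\mapsto S_A(f)$ by the operator $T_A$ applied to a vector whose $\XB_1$-coefficients agree on $A$ with the $\XB$-coefficients of $f$.

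For the unconditionality bound, fix $f\in\XX$ and $A\subseteq\NN$ with $|A|\le m$, and set
\[
g=\sum_{n\in A}\xx_n^*(f)\,\xx_{1,n}\in\XX_1.
\]
By biorthogonality, $T_A(g)=S_A(f)$, so $\Vert S_A(f)\Vert_\XX\le\zeta_m\Vert g\Vert_{\XX_1}$. The domination hypothesis supplies an element $h\in\XX_2$ with $\xx_{2,n}^*(h)=\xx_n^*(f)$ for every $n$ and $\Vert h\Vert_{\XX_2}\le D\Vert f\Vert_\XX$; the coefficient identity comes from the adjoint of the domination map, which sends $\xx_n^*\mapsto\xx_{2,n}^*$ on the biorthogonal system. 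Applying $\dom_m[\XB_1,\XB_2]$ to $h$ on the support $A$ yields
\[
\Vert g\Vert_{\XX_1}=\left\Vert\sum_{n\in A}\xx_{2,n}^*(h)\,\xx_{1,n}\right\Vert_{\XX_1}\le\dom_m[\XB_1,\XB_2]\Vert h\Vert_{\XX_2}\le D\,\dom_m[\XB_1,\XB_2]\Vert f\Vert_\XX,
\]
which closes the first inequality.

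For the squeeze symmetry bound, I rely on the characterization of $\sq_m[\XB,\XX]$ in Lemma~\ref{lem:SSParameters}~\ref{SSParameters:3}. Fix $f\in\XX$, $A\subseteq\NN$, $\varepsilon\in\EE^A$, and a greedy set $B$ of $f$ with $|A|=|B|\le m$, and set $t=\min_{n\in B}|\xx_n^*(f)|$. First, factor the characteristic vector through $\XB_1$ via $T_A$, obtaining $\Vert\Ind_{\varepsilon,A}[\XB,\XX]\Vert_\XX\le\zeta_m\Vert\Ind_{\varepsilon,A}[\XB_1,\XX_1]\Vert_{\XX_1}$. Second, take $g=\sum_{n\in B}\xx_n^*(f)\,\xx_{1,n}\in\XX_1$, which is supported on $B$ and satisfies $\min_{n\in B}|\xx_{1,n}^*(g)|=t$, so $B$ is automatically a greedy set of $g$; the $\sq_m[\XB_1,\XX_1]$ characterization applied to $g,A,\varepsilon,B$ yields $t\,\Vert\Ind_{\varepsilon,A}[\XB_1,\XX_1]\Vert_{\XX_1}\le\sq_m[\XB_1,\XX_1]\,\Vert g\Vert_{\XX_1}$. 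Bounding $\Vert g\Vert_{\XX_1}$ exactly as in the first part then closes the second inequality.

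The main obstacle I foresee is producing the coefficient-preserving lift $h\in\XX_2$: everything else is a routine composition of three bounded operations whose constants appear directly in the statement. Checking that the domination map genuinely carries $\xx_n^*$ to $\xx_{2,n}^*$ on the biorthogonal system, and that the resulting coefficient identity extends from the finitely supported vectors to all of $\XX$, is the one delicate computation, handled through the adjoint together with the continuity of the quasi-norm.
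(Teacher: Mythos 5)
Your argument is correct and follows essentially the same route as the paper's proof: factor $S_A(f)=T_A(g)$ with $g$ built from the $\XB$-coefficients of $f$ on $A$, bound $\Vert g\Vert$ via $\dom_m[\XB_1,\XB_2]$ applied to the coefficient-preserving image $h\in\XX_2$ supplied by the domination, and for the second inequality transfer the greedy-set data to $\XX_1$ and invoke Lemma~\ref{lem:SSParameters}~\ref{SSParameters:3}. Your write-up is in fact somewhat more careful than the paper's (which compresses the same three steps and leaves the coefficient identity $\Fou[\XB_2,\XX_2](h)=\Fou[\XB,\XX](f)$ implicit).
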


\begin{proof}
We will only prove the second inequality because it is more general. Given $f\in\XX$, let $g\in\XX_1$ be such that $\Fou(g)=\Fou(f)\chi_A$, and let $h\in\XX_2$ be such that $\Fou(h)=\Fou(f)$.
We have
\[
\Vert g \Vert
\le \dom_m[\XB_1,\XB_2]\Vert h \Vert, \quad \Vert h \Vert
\le D \dom_m[\XB_1,\XB_2] \Vert f \Vert
\]
and, since $S_A(f)=T_A(g)$,
\[
\Vert S_A(f)\Vert\le \zeta_m \Vert g \Vert.\]
Finally, if $A$ is a greedy set of $f$ and $|B|=|A|$,
\[
\Vert \Ind_A[\XB,\XX]\Vert \le \zeta_m \Vert \Ind_A[\XB_1,\XX_1]\Vert\le \sq_m [\XB_1,\XX_1] \, \Vert T_A(g) \Vert.\qedhere
\]
\end{proof}

Despite the fact that we stated Lemma~\ref{lem:domunclocal} in wide generality, in practice we will only apply it in the case when $\XB_1$ and $\XB_2$ are $1$-symmetric, so that $\sq_m[\XB_1,\XX_1]=1$ so that the parameters $\dom_m$ are easy to compute. We also point out that the best-case scenario occurs when $\XB_1$ dominates $\XB$, so that $\sup_m \zeta_m<\infty$. With an eye to applying Lemma~\ref{lem:domunclocal} to estimating Lebesgue constants, we record the parameters $\dom_m$ in some important situations.
\begin{equation}\label{eq:domlplq}
\dom_m[\ell_p,\ell_q]=m^{1/p-1/q}, \quad m\in\NN, \; p\le q.
\end{equation}
Given a non-decreasing weight $\prim= (s_m)_{m=1}^\infty$ we set
\[
H_m[\prim]=\sum_{n=1}^m \frac{s_n-s_{n-1}}{s_n}.
\]
If $\ww$ is a weight whose primitive weight $\prim$ is doubling, then for any $0<p<\infty$,
\begin{equation}
\label{eq:domLorenz}
\dom_m[d_{1,p}(\ww), d_{1,\infty}(\ww)]=(H_m[\prim])^{1/p}, \quad m\in\NN.
\end{equation}

\begin{remark}
A weight $\prim= (s_m)_{m=1}^\infty$ is said to have the \emph{upper regularity property} (URP for short) if there is $r\in\NN$ such that
\[
s_{rm}\le \frac{1}{2} r s_m, \quad m\in\NN,
\]
and is said to have the \emph{lower regularity property} (LRP for short) if there is $r\in\NN$ such that
\[
s_{rm}\ge 2 s_m, \quad m\in\NN.
\]
$\prim$ has the LRP if and only if $\prim^*=(m/s_m)_{m=1}^\infty$ has the URP. Moreover, if $\prim$ has the URP, there is a constant $C$ such that
\[
\sum_{n=1}^m \frac{1}{s_n} \le C \frac{m}{s_m}, \quad m\in\NN,
\]
(see \cite{DKKT2003}*{\S4}). Hence, if $\prim$ has the LRP, there is a constant $C$ such that
\[
\sum_{n=1}^m \frac{s_n}{n} \le C s_m= \sum_{n=1}^m s_n-s_{n-1}, \quad m\in\NN.
\]
Using that $1/s_{n}$ is non-increasing we infer that
\[
H_m:=\sum_{n=1}^m \frac{1}{n}= \sum_{n=1}^m \frac{s_n}{n}\frac{1}{s_n} \le C H_m[\prim],\quad m\in\NN.
\]
The reverse inequality holds for general doubling weights. Indeed, since $\inf_n s_n/s_{n+1}>0$, for every $\alpha>0$ there is a constant $C_1$ such that
\[
\frac{s_n-s_{n-1}}{s_n} \le C_1 \frac{s_n^\alpha -s_{n-1}^\alpha}{s_n^\alpha}, \quad n\in\NN.
\]
Moreover, for $\alpha$ small enough,
\[
C_2:=\sup_{n\le m} \frac{n}{m} \frac{s_m^\alpha}{s_n^\alpha}<\infty.
\]
Therefore
\[
s_m^\alpha \le C_2 \sum_{n=1}^m \frac{s_n^\alpha}{n},\quad m\in\NN.
\] Summing up,
\begin{equation*}
H_m[\prim] \le C_1 \sum_{n=1}^m \frac{s_n^\alpha -s_{n-1}^\alpha}{s_n^\alpha} \le C_1 C_2 H_m, \quad m\in\NN.
\end{equation*}
\end{remark}

\section{The Thresholding Greedy Algorithm, greedy parameters and duality}\label{Applications:Duality}\noindent 
An important research topic in approximation theory by means of bases is the study of the duality properties of the TGA. This section is motivated by the attempt to make headway in the following general question: If a basis $\XB$ enjoys some greedy-like property, what can be said about its dual basis $\XB^*$ in this regard? To that end, we need introduce the
\emph{bidemocracy parameters} of $\XB$,
\[
\bid_m[\XB,\XX]= \frac{1}{m} \usdf[\XB,\XX](m) \usdf[\XB^*,\XX^*](m), \quad m\in\NN.
\]
The basis $\XB$ is \emph{bidemocratic} (\cite{DKKT2003}) if and only if $\sup_m \bid_m<\infty$ (see e.g. \cite{AABW2019}*{Lemma 5.5}).

\begin{proposition}\label{prop:SSBiDem}
Let $\XB$ be a basis of a quasi-Banach space $\XX$. Then
\[
\max\{ \sq_m[\XB,\XX] , \sq_m[\XB^*,\XX^*]\} \le \bid_m[\XB,\XX], \quad m\in\NN.
\]
\end{proposition}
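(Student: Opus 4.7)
The plan is to deduce both inequalities from a single auxiliary bound, namely
\[
\fss_m[\XB,\XX] \le \frac{1}{m}\, \usdf[\XB^*,\XX^*](m), \quad m\in\NN.
\]
Once this is in hand, multiplying by $\usdf[\XB,\XX](m)$ gives $\sq_m[\XB,\XX]\le \bid_m[\XB,\XX]$ by definition of $\sq_m$; and swapping the roles of $\XB$ and $\XB^*$ in the auxiliary bound yields $\fss_m[\XB^*,\XX^*]\le \usdf[\XB,\XX](m)/m$, which multiplied by $\usdf[\XB^*,\XX^*](m)$ delivers $\sq_m[\XB^*,\XX^*]\le \bid_m[\XB,\XX]$, since the bidemocracy parameter is symmetric in $\XB$ and $\XB^*$.

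To prove the auxiliary bound, I would fix $f\in\XX\setminus\{0\}$ with $\dr_m(f)=t$ and pick $B\subseteq\NN$ with $|B|=m$ such that $|\xx_n^*(f)|\ge t$ for every $n\in B$. I would then \emph{dualize} by forming the test functional
\[
g^*=\sum_{n\in B} \overline{\varepsilon_n(f)}\,\xx_n^*\in\XX^*,
\]
where $\varepsilon_n(f)=\sgn(\xx_n^*(f))$. The conjugate signs are chosen precisely so that $\overline{\varepsilon_n(f)}\,\xx_n^*(f)=|\xx_n^*(f)|$, hence
\[
g^*(f)=\sum_{n\in B}|\xx_n^*(f)|\ge m t.
\]
On the other hand, $(\overline{\varepsilon_n(f)})_{n\in B}$ is a family of unimodular scalars, so by the very definition of the fundamental function applied to the dual basis,
\[
\Vert g^*\Vert\le \usdf[\XB^*,\XX^*](m).
\]
Combining the two estimates yields $mt\le \usdf[\XB^*,\XX^*](m)\Vert f\Vert$, which is the desired bound on $\dr_m(f)/\Vert f\Vert$; taking the supremum over $f$ gives the auxiliary inequality.

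No serious obstacle is foreseen here: the argument is a short duality pairing, and the only mild subtlety is handling the complex case correctly by using conjugate signs, which is why $\overline{\varepsilon_n(f)}$ (rather than $\varepsilon_n(f)$) appears in the definition of $g^*$. Everything else follows from the definitions of $\sq_m$, $\fss_m$, $\usdf$, and $\bid_m$ recorded earlier in the paper, together with the symmetry $\bid_m[\XB,\XX]=\bid_m[\XB^*,\XX^*]$.
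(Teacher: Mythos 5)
Your proposal is correct and follows essentially the same route as the paper: the paper also pairs $f$ (resp.\ $f^*$) against the sign-indicator $\Ind_{\overline{\varepsilon(f)},B}[\XB^*,\XX^*]$ (resp.\ $\Ind_{\overline{\varepsilon(f^*)},B}[\XB,\XX]$) over a set of $m$ large coordinates, bounds that element's norm by the relevant fundamental function, and then invokes the characterization of $1/\fss_m$ from Lemma~\ref{lem:SSParameters}~\ref{SSParameters:1}. Your auxiliary bound $\fss_m[\XB,\XX]\le\usdf[\XB^*,\XX^*](m)/m$ is exactly the content of that step, just stated with a general threshold $t$ instead of normalizing to $1$.
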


\begin{proof}
Let $f\in\XX$, $f^*\in\XX^*$, and $B\subseteq \NN$ with $|B|=m$. We have
\begin{align*}
\Vert f^* \Vert
&\ge\frac{f^*( \Ind_{\overline{\varepsilon(f^*)},B}[\XB,\XX])}{\usdf[\XB,\XX](m)}
=\frac{\sum_{n\in B} |f^*(\xx_n)|}{\usdf[\XB,\XX](m)}, \text{ and }\\
\Vert f \Vert
&= \frac{ \Vert f \Vert\, \Vert \Ind_{\overline{\varepsilon(f)},B}[\XB^*,\XX^*] \Vert}{\Vert \Ind_{\overline{\varepsilon(f)},B}[\XB^*,\XX^*] \Vert}
\ge\frac{\Ind_{\overline{\varepsilon(f)},B}[\XB^*,\XX^*](f)}{\usdf[\XB^*,\XX^*](m)}
=\frac{\sum_{n\in B} |\xx_n^*(f)|}{\usdf[\XB^*,\XX^*](m)}.
\end{align*}
Thus, if $|\{n\colon |x_{n}^{\ast}(f)|\ge 1\}|\ge m$ and $|\{n\colon |f^{\ast}(x_{n})|\ge 1\}|\ge m$ we infer that
\[
\Vert f\Vert \ge \frac{m}{\usdf[\XB^*,\XX^*](m)}, \quad \Vert f^*\Vert \ge \frac{m}{\usdf[\XB,\XX](m)}.
\]
We conclude the proof by applying Lemma~\ref{lem:SSParameters}~\ref{SSParameters:1}.
\end{proof}

Proposition~\ref{prop:SSBiDem} is a quantitative version of \cite{AABW2019}*{Proposition 5.7}. When combined with Theorem~\ref{thm:main} and Theorem~\ref{thm:MainNotMain} leads to linear estimates for the Lebesgue constants in terms of the bidemocracy parameters.

\begin{theorem}
Let $\XB$ be a basis of a quasi-Banach space $\XX$. Then, there are constants $C$ and $D$ such that for all $m\in\NN$,
\begin{equation}\label{eq:LebUncBid}
\leb_m \le C \max\{ \unc_m, \bid_m\},\end{equation}
and 
\[\leba_m \le D \max\{ \qg_m, \bid_m\}.\]
\end{theorem}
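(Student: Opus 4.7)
The plan is to obtain both inequalities as immediate consequences of the chain of results assembled in the previous sections; no new technical work is required. The key observation is that Proposition~\ref{prop:SSBiDem} already tells us that the squeeze symmetry parameters of $\XB$ are dominated by the bidemocracy parameters, so once we have the main Lebesgue-type estimates in terms of squeeze symmetry, we can simply substitute.

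For the first inequality, I would start from Theorem~\ref{thm:main}, which provides a constant $C_1$ (depending only on the modulus of concavity of $\XX$) such that
\[
\leb_m[\XB,\XX] \le C_1 \max\{\sq_m[\XB,\XX], \unc_m[\XB,\XX]\}, \quad m\in\NN.
\]
Then Proposition~\ref{prop:SSBiDem} yields $\sq_m[\XB,\XX] \le \bid_m[\XB,\XX]$, and replacing $\sq_m$ by $\bid_m$ on the right-hand side produces the claimed bound with $C=C_1$.

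For the second inequality, the argument is entirely parallel. Theorem~\ref{thm:MainNotMain} gives a constant $C_2$ such that
\[
\leba_m[\XB,\XX] \le C_2 \max\{\sqd_m[\XB,\XX], \qg_m[\XB,\XX]\}, \quad m\in\NN.
\]
Now Lemma~\ref{lem:SLCSSD} provides $\sqd_m \le \sq_m$, and Proposition~\ref{prop:SSBiDem} again gives $\sq_m \le \bid_m$. Chaining these two inequalities yields $\sqd_m \le \bid_m$, and substituting into the bound from Theorem~\ref{thm:MainNotMain} gives the desired estimate with $D=C_2$.

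There is no real obstacle here: the whole content of the statement is the observation that the upper halves of Theorem~\ref{thm:main} and Theorem~\ref{thm:MainNotMain}, combined with the dominance of the squeeze symmetry (respectively disjoint squeeze symmetry) parameters by the bidemocracy parameters, produce linear bounds for $\leb_m$ and $\leba_m$ in terms of $\bid_m$ and the appropriate unconditionality-type parameter. The only thing worth pointing out is that the constants $C$ and $D$ inherit their dependence on the modulus of concavity of $\XX$ from $C_1$ and $C_2$, and are otherwise universal.
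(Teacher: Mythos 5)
Your proposal is correct and coincides with the paper's own argument: the theorem is obtained exactly by combining the upper estimates of Theorem~\ref{thm:main} and Theorem~\ref{thm:MainNotMain} with the bound $\max\{\sq_m,\sqd_m\}\le\bid_m$ furnished by Proposition~\ref{prop:SSBiDem} (via Lemma~\ref{lem:SLCSSD} for the disjoint parameter). The remark about the constants depending only on the modulus of concavity is also as in the paper.
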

Inequality \eqref{eq:LebUncBid} was proved in the locally convex setting in \cite{AAB2020}*{Theorems 2.3 and 1.3} with the purpose of finding bounds for the growth of the greedy constant of the $L_p$-normalized Haar system as $p$ either increases $\infty$ or decreases to $1$.

Since the unconditionality parameters are defined in terms of linear operators, they dualize as expected, i.e.,
\[
\unc_m[\XB^*,\XX^*]\le \unc_m[\XB,\XX], \quad m\in\NN.
\]
The reverse inequality also holds in the case when $\XX$ is a Banach space. Consequently, by Theorem~\ref{thm:main}, for $m\in \NN$,
\begin{equation}\label{eq:GreedyDual}
\leb_m[\XB^*,\XX^*]\le C\max\{ \sq_m[\XB^*,\XX^*], \sq_m[\XB,\XX], \unc_m[\XB,\XX]\},
\end{equation}
where the constant $C$ depends only on the modulus of concavity of $\XX$. Our next goal is to obtain duality results for the almost greedy and quasi-greedy parameters.

\begin{proposition}\label{prop:QGDual}
Let $\XB$ be a basis of a quasi-Banach space $\XX$. Then for $m\in\NN$,
\begin{align*}
\qgc_m[\XB^*,\XX^*] &\le \qg_m[\XB,\XX] +\sq_m[\XB,\XX]+\sq_m[\XB^*,\XX^*]\text{ and }\\
\qg_m[\XB^*,\XX^*] &\le \qgc_m[\XB,\XX] +\sq_m[\XB,\XX]+\sq_m[\XB^*,\XX^*], \quad \end{align*}
\end{proposition}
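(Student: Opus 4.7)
The plan is to pass to the dual: for any finite $A\subseteq\NN$, the dual-basis coordinate projection is $S_A[\XB^*,\XX^*](f^*)=\sum_{n\in A}f^*(\xx_n)\,\xx_n^*$, and biorthogonality gives $(S_A[\XB^*,\XX^*](f^*))(g)=f^*(S_A(g))$ for every $g\in\XX$. Consequently,
\[
\Vert f^*-S_A[\XB^*,\XX^*](f^*)\Vert=\sup_{\Vert g\Vert\le 1}|f^*(g-S_A(g))|,\qquad \Vert S_A[\XB^*,\XX^*](f^*)\Vert=\sup_{\Vert g\Vert\le 1}|f^*(S_A(g))|.
\]
Thus, given a greedy set $A$ of $f^*\in\XX^*$ with $|A|\le m$, it suffices to estimate these inner products for each $g\in\XX$ with $\Vert g\Vert\le 1$. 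I would pick a greedy set $B$ of $g$ with $|B|=|A|$ and use the two companion identities
\[
g-S_A(g)=(g-S_B(g))+S_{B\setminus A}(g)-S_{A\setminus B}(g),\qquad S_A(g)=S_B(g)-S_{B\setminus A}(g)+S_{A\setminus B}(g).
\]

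For the first inequality, apply $f^*$ to the first decomposition. The piece $|f^*(g-S_B(g))|\le\Vert f^*\Vert\,\Vert g-S_B(g)\Vert$ is controlled by $\qg_m[\XB,\XX]$ via $\Vert g-S_B(g)\Vert^p\le\Vert g\Vert^p+\Vert S_B(g)\Vert^p\le(1+(\qg_m[\XB,\XX])^p)\Vert g\Vert^p$. For the piece $|f^*(S_{B\setminus A}(g))|=|(S_{B\setminus A}[\XB^*,\XX^*](f^*))(g)|$, the key observation is that $B\setminus A$ is disjoint from the greedy set $A$ of $f^*$, whence $|f^*(\xx_n)|\le t:=\min_{n\in A}|f^*(\xx_n)|$ for every $n\in B\setminus A$; applying Lemma~\ref{lem:SignVsChar} to the dual basis replaces $S_{B\setminus A}[\XB^*,\XX^*](f^*)$ by a supremum of $t\,\Vert\Ind_{\eta,E}[\XB^*,\XX^*]\Vert$ over $E\subseteq B\setminus A$ and signs $\eta$, and Lemma~\ref{lem:SSParameters}~\ref{SSParameters:3} applied to $f^*$ with $A$ as its greedy set gives $t\,\Vert\Ind_{\eta,E}[\XB^*,\XX^*]\Vert\le\sq_m[\XB^*,\XX^*]\Vert f^*\Vert$. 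The third piece $|f^*(S_{A\setminus B}(g))|\le\Vert f^*\Vert\,\Vert S_{A\setminus B}(g)\Vert$ is handled symmetrically in the primal: $A\setminus B$ is disjoint from the greedy set $B$ of $g$, so $|\xx_n^*(g)|\le s:=\min_{n\in B}|\xx_n^*(g)|$ for $n\in A\setminus B$, and Lemma~\ref{lem:SignVsChar} combined with Lemma~\ref{lem:SSParameters}~\ref{SSParameters:3} applied to $g$ bounds $\Vert S_{A\setminus B}(g)\Vert$ by a constant multiple of $\sq_m[\XB,\XX]\Vert g\Vert$.

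The second inequality uses the second decomposition. The only new piece to estimate is $|f^*(S_B(g))|\le\Vert f^*\Vert\,\Vert S_B(g)\Vert$, and now $\Vert S_B(g)\Vert^p\le\Vert g\Vert^p+\Vert g-S_B(g)\Vert^p\le(1+(\qgc_m[\XB,\XX])^p)\Vert g\Vert^p$ delivers the $\qgc_m[\XB,\XX]$ contribution, while the other two pieces are bounded verbatim as before. The $\qg_m\leftrightarrow\qgc_m$ swap between the two inequalities is precisely the swap between $g-S_B(g)$ and $S_B(g)$ in pieces 1.

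The main obstacle I anticipate is bookkeeping of constants: the two applications of Lemma~\ref{lem:SignVsChar} introduce factors of $A_p$ and $\FieldC^{1/p}$, and the $p$-subadditivity bounds on $\Vert g-S_B(g)\Vert$ and $\Vert S_B(g)\Vert$ generate a harmless additive $1$ which must be absorbed into a modulus-of-concavity--dependent factor or into the squeeze-symmetry parameters. The natural output of this strategy is a $p$-sum inequality of the shape $(\qgc_m[\XB^*,\XX^*])^p\lesssim(\qg_m[\XB,\XX])^p+(\sq_m[\XB,\XX])^p+(\sq_m[\XB^*,\XX^*])^p$, from which the linear-sum bound stated in the proposition follows in the Banach setting ($p=1$) and, up to the modulus-of-concavity factors implicit throughout the paper, in the general quasi-Banach case.
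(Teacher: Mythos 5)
Your proof is essentially the paper's: you dualize the projections via $(S_A[\XB^*,\XX^*](f^*))(g)=f^*(S_A(g))$, introduce a greedy set $B$ of $g$ with $|B|=|A|$, and use the same three-piece decomposition, with the two cross terms over $B\setminus A$ and $A\setminus B$ controlled by $\sq_m[\XB^*,\XX^*]$ and $\sq_m[\XB,\XX]$ respectively, exactly as in the paper (your appeal to Lemma~\ref{lem:SSParameters} for sets $E$ with $|E|\le |A|$ rather than $|E|=|A|$ is covered by part~\ref{SSParameters:2} of that lemma, and the paper does the same). The only real divergence is quantitative: the paper keeps the cross terms as scalar sums $\sum_{n} f^*(\xx_n)\,\xx_n^*(g)$ and pairs the signs directly with the functional, so each piece is bounded by its parameter times $\Vert f^*\Vert\,\Vert g\Vert$ with constant exactly $1$ and the constant-free additive bound follows from the genuine triangle inequality in the Banach space $\XX^*$, whereas your detour through vector norms in $\XX$ and $\XX^*$, Lemma~\ref{lem:SignVsChar}, and the $p$-subadditive swap between $\qg_m$ and $\qgc_m$ delivers the inequality only up to a modulus-of-concavity constant --- a loss you correctly flag and which is harmless for every downstream use.
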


\begin{proof}
Given $D\subseteq \NN$, put $S_D=S_D[\XB,\XX]$. Let $A$ be a greedy set of $f^*\in\XX^*$, and let $B$ be a greedy set of $f\in\XX$. Assume that $|A|=|B|\le m$. Then
\begin{align*}
|S_{B}^*(f^*)(S_{A^c}(f))|
&=\left|\sum_{n\in B\setminus A} f^*(\xx_n) \, \xx_n^*(f)\right|\\
&\le\min_{n\in A} | f^*(\xx_n)| \sum_{n\in B\setminus A} |\xx_n^*(f)|\\
&=\min_{n\in A} | f^*(\xx_n)| \, \Vert\Ind_{\varepsilon(f),B\setminus A}[\XB^*,\XX^*](f) \Vert\\
&\le \min_{n\in A} | f^*(\xx_n)| \, \Vert \Ind_{\varepsilon,B\setminus A}[\XB^*,\XX^*]\Vert \, \Vert f\Vert \\
&\le \sq_m[\XB^*,\XX^*]\, \Vert f^*\Vert \, \Vert f\Vert.
\end{align*}
Similarly, switching the roles of $\XB$ and $\XB^*$, we obtain
\[
|S_{A}^*(f^*)(S_{B^c}(f))|\le \sq_m[\XB,\XX] \Vert f^*\Vert \, \Vert f\Vert.
\]
Applying these inequalities to the identities
\begin{align*}\label{eq:DKKTIdea}
S_{A^c}^*(f^*)(f) &=f^*(S_{B^c}(f))+ S_{A^c}^*(f^*)(S_B(f))- S_{A}^*(f^*)(S_{B^c}(f))\\
S_{A}^*(f^*)(f)&= f^*(S_{B}(f))- S_{A^c}^*(f^*)(S_B(f)) + S_{A}^*(f^*)(S_{B^c}(f))
\end{align*}
leads to the desired inequalities.
\end{proof}

\begin{proposition}\label{cor:AGDual}
Let $\XB$ be a basis of a quasi-Banach space $\XX$. Threre are constant $C$, depending only on the modulus of concavity of $\XX$ such that
\[
\leba_m[\XB^*,\XX^*]\le C\max\{ \sq_m[\XB^*,\XX^*], \sq_m[\XB,\XX], \qg_m[\XB,\XX]\}, \quad m\in\NN.
\]
\end{proposition}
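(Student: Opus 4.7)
The proof should assemble three earlier results in a nearly mechanical way; the only delicate point is handling the quasi-greedy vs.\ complemented quasi-greedy conversion so that the final constant only depends on the modulus of concavity of $\XX$.

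First, I would apply Theorem~\ref{thm:MainNotMain} with the basis $\XB^*$ in place of $\XB$ and with the ambient space $\XX^*$. Since $\XX^*$ is always a Banach space (regardless of whether $\XX$ is only quasi-Banach), its modulus of concavity equals $1$ and the resulting constant is absolute:
\[
\leba_m[\XB^*,\XX^*]\le C_0\,\max\bigl\{\sqd_m[\XB^*,\XX^*],\,\qg_m[\XB^*,\XX^*]\bigr\}.
\]
I would then bound each of the two terms inside the maximum. For the first, Lemma~\ref{lem:SLCSSD} applied to $\XB^*$ gives directly $\sqd_m[\XB^*,\XX^*]\le\sq_m[\XB^*,\XX^*]$.

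For the second, I would invoke the second inequality of Proposition~\ref{prop:QGDual}:
\[
\qg_m[\XB^*,\XX^*]\le \qgc_m[\XB,\XX]+\sq_m[\XB,\XX]+\sq_m[\XB^*,\XX^*].
\]
To get rid of the factor $\qgc_m[\XB,\XX]$ (which does not appear in the target), I would use the $p$-Banach inequality $(\qgc_m)^p\le 1+(\qg_m)^p$ recorded in Section~\ref{term}. Since $\qg_m[\XB,\XX]\ge 1$, this gives $\qgc_m[\XB,\XX]\le 2^{1/p}\,\qg_m[\XB,\XX]$, with a constant depending only on $p$, i.e.\ only on the modulus of concavity of $\XX$. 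Combining yields
\[
\qg_m[\XB^*,\XX^*]\lesssim \qg_m[\XB,\XX]+\sq_m[\XB,\XX]+\sq_m[\XB^*,\XX^*],
\]
with an implicit constant depending only on the modulus of concavity of $\XX$.

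Substituting these two bounds back into the estimate from Theorem~\ref{thm:MainNotMain} and taking the maximum yields
\[
\leba_m[\XB^*,\XX^*]\le C\,\max\bigl\{\sq_m[\XB^*,\XX^*],\,\sq_m[\XB,\XX],\,\qg_m[\XB,\XX]\bigr\},
\]
as desired. The only real subtlety in this plan is the need to pass from $\qgc$ to $\qg$ in the quasi-Banach setting, where the cost of the Aoki-Rolewicz reduction enters through the exponent $1/p$; every other step is a direct invocation of a previously established result.
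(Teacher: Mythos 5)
Your proposal is correct and follows essentially the same route as the paper, whose proof consists precisely of combining Theorem~\ref{thm:MainNotMain} (applied to $\XB^*$ in $\XX^*$) with Proposition~\ref{prop:QGDual}; your additional steps --- using $\sqd_m\le\sq_m$ from Lemma~\ref{lem:SLCSSD} and the conversion $\qgc_m\le 2^{1/p}\qg_m$ --- are the natural way to make that combination explicit.
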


\begin{proof}
Just combine Theorem~\ref{thm:MainNotMain} with Proposition~\ref{prop:QGDual}.
\end{proof}

We make a stop \emph{en route} to gather some consequences of combining Theorem~\ref{thm:main}, Theorem~\ref{thm:MainNotMain}, Proposition~\ref{prop:SSBiDem}, Proposition~\ref{prop:QGDual}, Proposition~\ref{cor:AGDual} and inequality \eqref{eq:GreedyDual}.
\begin{theorem}
Let $\XB$ be a basis of a quasi-Banach space $\XX$. There are constants $C_1$, $C_2$, and $C_3$, depending only on the modulus of concavity of $\XX$, such that
\begin{align}
\qg_m[\XB^*,\XX^*] &\le C_1 \max\{\bid_m[\XB,\XX],\qg_m[\XB,\XX]\},\nonumber \\
\leba_m[\XB^*,\XX^*] &\le C_2 \max\{\bid_m[\XB,\XX],\leba_m[\XB,\XX]\}, \text{ and } \label{eq:AGDualBid}\\
\leb_m[\XB^*,\XX^*] &\le C_3 \max\{\bid_m[\XB,\XX],\leb_m[\XB,\XX]\}\nonumber.
\end{align}
\end{theorem}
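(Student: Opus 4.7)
The plan is to prove the three inequalities by assembling results already established in the paper. In each case the scheme is the same: (i) start with a duality bound that expresses the dual parameter in terms of the squeeze symmetry parameters of $\XB$ and $\XB^*$ plus a primal greedy/unconditionality parameter; (ii) apply Proposition~\ref{prop:SSBiDem} to replace both $\sq_m[\XB,\XX]$ and $\sq_m[\XB^*,\XX^*]$ by $\bid_m[\XB,\XX]$; (iii) invoke Theorem~\ref{thm:main} or Theorem~\ref{thm:MainNotMain} to absorb the remaining primal parameter into the target Lebesgue-type parameter. Throughout I will assume $\XX$ is a $p$-Banach space, $0<p\le 1$, so that the elementary estimate $(a^p+b^p+c^p)^{1/p}\le 3^{1/p}\max\{a,b,c\}$ lets me trade $p$-sums for maxima without affecting how the constants depend on the modulus of concavity.

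For the first bound, Proposition~\ref{prop:QGDual} provides
\[
\qg_m[\XB^*,\XX^*] \le \qgc_m[\XB,\XX] + \sq_m[\XB,\XX] + \sq_m[\XB^*,\XX^*].
\]
The inequality $(\qgc_m)^p\le 1+(\qg_m)^p$ recorded in the introduction yields $\qgc_m[\XB,\XX]\lesssim \max\{1,\qg_m[\XB,\XX]\}$, and Proposition~\ref{prop:SSBiDem} bounds the two squeeze symmetry terms by $\bid_m[\XB,\XX]$. Taking the $p$-sum and converting to a maximum produces a constant $C_1$ depending only on the modulus of concavity of $\XX$, as required.

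For the second inequality, I start from Proposition~\ref{cor:AGDual}, which gives
\[
\leba_m[\XB^*,\XX^*] \le C\max\{\sq_m[\XB^*,\XX^*],\, \sq_m[\XB,\XX],\, \qg_m[\XB,\XX]\}.
\]
Theorem~\ref{thm:MainNotMain} provides $\qg_m[\XB,\XX]\le C'\,\leba_m[\XB,\XX]$ with $C'$ depending only on the modulus of concavity, and Proposition~\ref{prop:SSBiDem} handles the two squeeze symmetry factors. For the third inequality, inequality \eqref{eq:GreedyDual} gives
\[
\leb_m[\XB^*,\XX^*] \le C\max\{\sq_m[\XB^*,\XX^*],\, \sq_m[\XB,\XX],\, \unc_m[\XB,\XX]\};
\]
by Theorem~\ref{thm:main} we have $\uncc_m[\XB,\XX]\le \leb_m[\XB,\XX]$, and combined with $(\unc_m)^p\le 1+(\uncc_m)^p$ this yields $\unc_m[\XB,\XX]\lesssim \leb_m[\XB,\XX]$. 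Proposition~\ref{prop:SSBiDem} once more disposes of the squeeze symmetry terms.

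This is essentially a bookkeeping synthesis; there is no real obstacle. The closest thing to a subtlety is verifying at each step that the constants produced depend only on the modulus of concavity of $\XX$, but this follows immediately from the corresponding statements in the cited ingredients. The genuine work has already been carried out in Propositions~\ref{prop:SSBiDem}, \ref{prop:QGDual}, \ref{cor:AGDual}, Theorems~\ref{thm:main} and \ref{thm:MainNotMain}, and inequality \eqref{eq:GreedyDual}; the present theorem is the natural linear-in-$\max$ statement that emerges when all of these are combined.
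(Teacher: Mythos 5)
Your proposal is correct and follows exactly the route the paper intends: the theorem is stated as the result of combining Propositions~\ref{prop:SSBiDem}, \ref{prop:QGDual}, \ref{cor:AGDual}, Theorems~\ref{thm:main} and \ref{thm:MainNotMain}, and inequality \eqref{eq:GreedyDual}, which is precisely your assembly. The bookkeeping steps (passing from $\qgc_m$ to $\qg_m$ and from $\unc_m$ to $\leb_m$, and tracking that all constants depend only on the modulus of concavity) are handled correctly.
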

Note that \eqref{eq:AGDualBid} is a quantitative version of \cite{DKKT2003}*{Theorem 5.4} (see also \cite{AABW2019}*{Corollary 6.8}).

Inequality~\eqref{eq:GreedyDual}, Proposition~\ref{prop:QGDual} and Proposition~\ref{cor:AGDual} justify the quest to find upper estimates for the squeeze symmetry parameters of the dual basis. We tackle this problem with the help of the bidemocracy paremeters.

Given a non-decreasing sequence $\prim= (s_m)_{m=1}^\infty$ we set
\[
R_m[\prim]=\frac{s_m}{m}\sum_{n=1}^m \frac{1}{s_n}, \quad m\in\NN.
\]
If $\prim$ has the URP, $\sup_m R_m[\prim]<\infty$. Thus the sequence $(R_m[\prim])_{m=1}^\infty$ could be said to measure the regularity of $\prim$. Note that, in the case when the dual weight $\prim^*$ is also non-decreasing,
\[
R_m[\prim]\le H_m, \quad m\in\NN.
\]
\begin{theorem}\label{eq:dualizeSS}
Let $0<p\le 1$, and let $\prim$ be the fundamental function of a basis $\XB=(\xx_n)_{n=1}^\infty$ of a $p$-Banach space $\XX$. Then, there are constants $C_1$ and $C_2$ depending only on $p$ such that
\begin{align*}
\bid_m[\XB,\XX] & \le C_1 \sq_m[\XB,\XX] R_m[\prim], \text{ and }\\
\unc_m[\XB,\XX] &\le C_2 \sq_m[\XB,\XX] (H_m[\prim])^{1/p}, \quad m\in\NN.
\end{align*}
\end{theorem}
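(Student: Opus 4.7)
The plan is to establish the two inequalities by separate arguments, both grounded in the pointwise estimate
\[
\dr_k(f)\le (\sq_k[\XB,\XX]/s_k)\,\Vert f\Vert, \quad k\le m,
\]
which is Lemma~\ref{lem:SSParameters}\ref{SSParameters:2} rewritten via $\sq_k = \fss_k\, s_k$ and then weakened by replacing $\sq_k$ with $\sq_m$ using the monotonicity of $(\sq_k)_{k=1}^\infty$.

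For the bidemocracy inequality, I would start by observing that $\bid_m = s_m\,\usdf[\XB^*,\XX^*](m)/m$ and $R_m[\prim]=(s_m/m)\sum_{n=1}^m 1/s_n$, so the claim reduces to $\usdf[\XB^*,\XX^*](m)\le C_1\sq_m\sum_{n=1}^m 1/s_n$. Fixing $A$ with $|A|=m$, $\varepsilon\in\EE^A$, and $f\in\XX$ with $\Vert f\Vert\le 1$, I would simply estimate
\[
\Bigl|\sum_{n\in A}\varepsilon_n\,\xx_n^*(f)\Bigr|\le\sum_{n\in A}|\xx_n^*(f)|\le\sum_{k=1}^m \dr_k(f)\le \sq_m\sum_{k=1}^m 1/s_k
\]
and take the supremum over $f$ to finish with $C_1=1$.

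For the unconditionality inequality, I would fix $A$ with $|A|\le m$ and $f\in\XX$, set $D=\sq_m\Vert f\Vert$, and enumerate the support of $S_A(f)$ as $n_1,n_2,\dots$ so that $b_k:=|\xx_{n_k}^*(f)|$ is non-increasing, with $\varepsilon_k:=\varepsilon_{n_k}(f)$. The bound $b_k s_k\le D$ motivates a dyadic decomposition by the size of $s_k$: for $\ell\in\ZZ$ let
\[
J_\ell:=\{k\in[1,m]:s_k\in[2^\ell,2^{\ell+1})\},
\]
each an interval of $\NN$ because $\prim$ is non-decreasing. On $J_\ell$ the rescaled coefficients $(b_k\,2^\ell/D)_{k\in J_\ell}$ have modulus at most $1$, and Lemma~\ref{lem:SignVsChar} (with $f=0$) combined with $\usdf(|J_\ell|)\le s_{\max J_\ell}<2^{\ell+1}$ gives
\[
\Bigl\Vert\sum_{k\in J_\ell} b_k\varepsilon_k\xx_{n_k}\Bigr\Vert \le A_p\,(D/2^\ell)\,\usdf(|J_\ell|)\le 2A_p D.
\]
Summing over $\ell$ via $p$-subadditivity produces $\Vert S_A(f)\Vert^p\le (2A_p)^p D^p\,L$, where $L$ is the number of non-empty layers.

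The main obstacle is to bound $L$ by a constant multiple of $H_m[\prim]$. For this I would use that in any $p$-Banach space $\prim$ is automatically doubling: the inequality $s_{n+1}^p\le s_n^p+s_1^p\le 2 s_n^p$ forces $s_{n+1}/s_n\le 2^{1/p}$. Then the chain $(s_n-s_{n-1})/s_n\ge (s_{n-1}/s_n)\ln(s_n/s_{n-1})\ge 2^{-1/p}\ln(s_n/s_{n-1})$ (using $x-1\ge\ln x$) telescopes to $H_m[\prim]\ge 1+2^{-1/p}\ln(s_m/s_1)$; combined with $L\le 2+\log_2(s_m/s_1)$ and $H_m[\prim]\ge 1$ for $m\ge 1$, this yields $L\le C_p H_m[\prim]$ for a constant $C_p$ depending only on $p$. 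Substituting back gives $\Vert S_A(f)\Vert\le C'_p\,\sq_m\, H_m[\prim]^{1/p}\Vert f\Vert$, which is the desired bound on $\unc_m$.
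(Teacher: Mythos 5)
Your proof is correct, and for the unconditionality estimate it takes a genuinely different route from the paper. For the bidemocracy bound the two arguments are essentially the same: the paper dualizes $\Fou_A$ and invokes the duality between $d_{1,\infty}(\ww)$ and $d_{1,1}(1/\prim)$, while you carry out the underlying computation $\sum_{n\in A}|\xx_n^*(f)|\le\sum_{k=1}^m \dr_k(f)\le \sq_m\sum_{k=1}^m 1/s_k$ by hand; both land on the same estimate for $\usdf[\XB^*,\XX^*](m)$. For the unconditionality bound the paper squeezes $\XB$ between the unit vector systems of $d_{1,p}(\ww)$ and $d_{1,\infty}(\ww)$ and then quotes Lemma~\ref{lem:domunclocal} together with the identity $\dom_m[d_{1,p}(\ww),d_{1,\infty}(\ww)]=(H_m[\prim])^{1/p}$, whereas you give a direct dyadic layer decomposition of $S_A(f)$ according to the size of $s_k$, using $b_k s_k\le \sq_m\Vert f\Vert$ on each layer and Lemma~\ref{lem:SignVsChar} to control the layer norms by $2A_p\sq_m\Vert f\Vert$. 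The remaining work, bounding the number of layers by $C_pH_m[\prim]$ via the automatic $2^{1/p}$-doubling of $\prim$ in a $p$-Banach space and the telescoping inequality $H_m[\prim]\ge 1+2^{-1/p}\ln(s_m/s_1)$, is in effect a self-contained proof of the lower half of \eqref{eq:domLorenz} specialized to what is needed here. What your approach buys is independence from the embedding machinery of \cite{AABW2019} (Theorem 9.12 and Lemma~\ref{lem:domunclocal}) and from the Lorentz-space norm computations; what it gives up is the structural information that the estimate is really a domination statement between Lorentz sequence spaces, and the constants are marginally less tidy. All the individual steps check out: $b_k\le\dr_k(f)$, the monotonicity of $(\sq_k)$ and of $\prim$, $\usdf(|J_\ell|)\le s_{\max J_\ell}<2^{\ell+1}$, and $s_0=0$ so that $H_m[\prim]\ge 1$.
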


\begin{proof}
Let $A\subseteq\NN$ with $|A|\le m$. Dualizing the operator $\Fou_A$ and taking into consideration \cite{CRS2007}*{Theorem 2.4.14}, we obtain that the operator
\[
T_A \colon d_{1,1}(1/\prim) \to \XX^*, \quad (a_n)_{n=1}^\infty \mapsto \sum_{n\in A} a_n\, \xx_n^*
\]
satisfies $ \Vert T_A\Vert \le \sq_m$. In particular,
\[
\Vert \Ind_{\varepsilon, A}[\XB^*,\XX^*] \Vert\le \sq_m \sum_{n=1}^m \frac{1}{s_n}, \quad \varepsilon\in\EE^A.
\]
This yields the estimate for the bidemocracy parameters. Now, by \cite{AABW2019}*{Theorem 9.12}, the unit vector system of $d_{1,p}(\ww)$ dominates $\XB$. Appealing to Lemma~\ref{lem:domunclocal} and the to the identity \eqref{eq:domLorenz} we obtain the estimate for the unconditionality parameters.
\end{proof}

To finish this section we will obtain estimates for the squeeze symmetric parameters in some particular situations that occur naturally in applications. Let us introduce a mild condition on bases.

\begin{definition}\label{def:UGH}
We say that a basis has the \emph{upper gliding hump property for constant coefficients} if there is a constant $C$ such that for every $A$ and $D\subseteq \NN$ finite there is $B\subseteq\NN$ with $A\cap D=\emptyset$, $|B|\le |A|$, and $\Vert \Ind_{A}\Vert \le C\Vert \Ind_{B}\Vert$.
\end{definition}

For instance, the trigonometric system in $L_1(\TT)$ or, more generally, in any translation invariant quasi-Banach space over $\TT$, has the upper gliding hump property for constant coefficients. Similarly any wavelet basis in a translation invariant quasi-Banach space over $\RR^d$ has the upper gliding hump property for constant coefficients.

\begin{lemma}\label{lem:UGH}
Suppose that a basis $\XB$ of a quasi-Banach space $\XX$ has the upper gliding hump property for constant coefficients. Then there is a constant $C$ such that
$\sq_m[\XB,\XX]\le C \sqd_m[\XB,\XX]$ for all $m\in\NN$.
\end{lemma}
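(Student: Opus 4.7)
The plan is to verify the bound through the characterization in Lemma~\ref{lem:SSParameters}~\ref{SSParameters:3}. I fix $A\subseteq\NN$ finite, $\varepsilon\in\EE^A$, $f\in\XX$, and a greedy set $B$ of $f$ with $|A|=|B|\le m$, and aim to bound $t\,\Vert\Ind_{\varepsilon,A}\Vert$, where $t:=\min_{n\in B}|\xx_n^*(f)|$, by a constant multiple of $\sqd_m\Vert f\Vert$. A standard density argument lets me assume $f$ is finitely supported, so $D:=B\cup\supp(f)$ is a finite set to which the upper gliding hump property (UGHCC, with constant $C_0$) can be applied. I split $A=A_1\sqcup A_2$ with $A_1:=A\cap\supp(f)$ and $A_2:=A\setminus\supp(f)$, so that by $p$-subadditivity
\[
\Vert\Ind_{\varepsilon,A}\Vert^p\le \Vert\Ind_{\varepsilon|_{A_1},A_1}\Vert^p+\Vert\Ind_{\varepsilon|_{A_2},A_2}\Vert^p.
\]

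The $A_2$ piece already fits the hypotheses of $\sqd_m$: taking $B_2\subseteq B$ to be the $|A_2|$ members of $B$ with largest $|\xx_n^*(f)|$ produces a greedy subset of cardinality $|B_2|=|A_2|$ with $\min_{n\in B_2}|\xx_n^*(f)|\ge t$, and $\sqd_m$ yields $t\,\Vert\Ind_{\varepsilon|_{A_2},A_2}\Vert\le \sqd_m\Vert f\Vert$. For the $A_1$ piece the sign $\varepsilon|_{A_1}$ obstructs a direct application of UGHCC, so I first strip signs via Lemma~\ref{lem:SignVsChar}: successive use of $K[A_1,0]\le\FieldC^{1/p}M[\mathbf{1},A_1,0]$ and $M[\mathbf{1},A_1,0]\le A_p\,N[\mathbf{1},A_1,0]$ (with $\mathbf{1}$ the all-ones sign) yields
\[
\Vert\Ind_{\varepsilon|_{A_1},A_1}\Vert\le K[A_1,0]\le \FieldC^{1/p}A_p\sup_{F\subseteq A_1}\Vert\Ind_F\Vert.
\]
For each $F\subseteq A_1$, UGHCC applied with forbidden set $D$ furnishes $F'\subseteq\NN$ disjoint from $D$ with $|F'|\le|F|\le m$ and $\Vert\Ind_F\Vert\le C_0\Vert\Ind_{F'}\Vert$. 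Since $F'$ is disjoint from $\supp(f)$ and $|F'|\le|B|$, selecting the greedy subset of $B$ of cardinality $|F'|$ with largest coefficients and invoking $\sqd_m$ gives $t\,\Vert\Ind_{F'}\Vert\le \sqd_m\Vert f\Vert$, hence $t\,\Vert\Ind_F\Vert\le C_0\sqd_m\Vert f\Vert$. Taking the supremum over $F$ delivers $t\,\Vert\Ind_{\varepsilon|_{A_1},A_1}\Vert\le\FieldC^{1/p}A_pC_0\sqd_m\Vert f\Vert$.

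Combining both estimates through $p$-subadditivity,
\[
t^p\,\Vert\Ind_{\varepsilon,A}\Vert^p\le\bigl(\FieldC A_p^pC_0^p+1\bigr)(\sqd_m)^p\Vert f\Vert^p,
\]
and Lemma~\ref{lem:SSParameters}~\ref{SSParameters:3} then yields $\sq_m\le\bigl(\FieldC A_p^pC_0^p+1\bigr)^{1/p}\sqd_m$. The anticipated obstacle is handling the signs: UGHCC as stated controls only unsigned indicator norms $\Vert\Ind_F\Vert$, whereas $\sq_m$ is defined through signed ones $\Vert\Ind_{\varepsilon,A}\Vert$. Lemma~\ref{lem:SignVsChar} circumvents this by bounding the signed norm by a supremum of unsigned norms on subsets of $A_1$, which is precisely the form to which UGHCC can be applied one subset at a time.
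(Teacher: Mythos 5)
Your argument is correct and follows essentially the same route as the paper's proof: reduce to finitely supported $f$, glide the relevant index set off $\supp(f)$ via the upper gliding hump property, invoke $\sqd_m$ on the glided set, and mediate between signed and unsigned sums with Lemma~\ref{lem:SignVsChar}. The only differences are cosmetic --- the paper glides all of $A$ at once rather than splitting off $A\cap\supp(f)$ and treating the two pieces separately, and it spells out the perturbation (choosing a finitely supported $g$ with $\Vert f-g\Vert$ small enough that $B$ remains a greedy set of $g$ and the norm increases by at most a prescribed $\epsilon$) that you compress into ``a standard density argument.''
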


\begin{proof}
Let $C$ be as in Definition~\ref{def:UGH}, and set $C_1=\Vert \Fou\Vert_{\XX\to c_0}$. Given $f\in\XX$ and $t>0$, let $B=\{n\in\NN \colon |\xx_n^*(f)|\ge t\}$, $s=\max\{ n\in\NN \setminus B \colon |\xx_n^*(f)|\}$. Suppose that $|B|\ge m$. Pick $A\subseteq\NN$ finite with $|A|\le |B|$ and $\epsilon>0$. There is $0<\epsilon_1 < (t-s)/(2C_1)$ such that $\Vert g\Vert \le \Vert f \Vert+\epsilon/(C\sqd_m)$ whenever $\Vert f -g\Vert\le \epsilon_1$. Use density to choose $g\in\XX$ with finite support. Then $B$ is a greedy set of $g$. Moreover, there is $D\subseteq\NN$ with $|D|\le |A|$, $D\cap\supp(g)=\emptyset$ and $\Vert \Ind_A\Vert \le C\Vert \Ind_D\Vert$. Therefore,
\[
t\Vert \Ind_A\Vert \le C \sqd_m \Vert g \Vert\le C \sqd_m \Vert f \Vert +\epsilon.
\]
Since $\epsilon$ is arbitrary, applying Lemma~\ref{lem:SignVsChar} puts an end to the proof.
\end{proof}

\begin{proposition}\label{prop:UGHPCC}
Let $\XB=(\xx_n)_{n=1}^\infty$ be a basis of a $p$-Banach space $\XX$, $0<p\le 1$. Suppose that $\XB$ has the upper gliding hump property for constant coefficients. Then, there is a constant $C$ depending only on $p$ such that $\leb_m \le C \leba_m (\log m)^{1/p}$ for all $m\ge 2$.
\end{proposition}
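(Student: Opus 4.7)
The plan is to invoke Theorem~\ref{thm:main} to reduce the Lebesgue parameter to the maximum of $\sq_m$ and $\unc_m$, and then bound these two quantities separately by $\leba_m(\log m)^{1/p}$, using the UGH hypothesis together with the almost greedy tools already in place. Concretely, Theorem~\ref{thm:main} provides a constant $C_1$ depending only on $p$ (via the modulus of concavity of the $p$-Banach space $\XX$) such that
\[
\leb_m\le C_1 \max\{\sq_m[\XB,\XX],\, \unc_m[\XB,\XX]\},\quad m\in\NN.
\]
So the task reduces to bounding each of $\sq_m$ and $\unc_m$ by a constant multiple of $\leba_m(\log m)^{1/p}$.

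For $\sq_m$, the UGH hypothesis is precisely what is needed: Lemma~\ref{lem:UGH} supplies the nontrivial inequality $\sq_m\lesssim \sqd_m$, after which the right-hand inequality in Theorem~\ref{thm:MainNotMain} produces $\sqd_m\le C_2 \leba_m$. In particular, $\sq_m\lesssim \leba_m\le \leba_m(\log m)^{1/p}$ for $m\ge 2$. For $\unc_m$, Theorem~\ref{thm:MainNotMain} gives $\qg_m\le C_2\, \leba_m$, and then the classical logarithmic bound for the unconditionality parameters of a basis in terms of its quasi-greedy parameters (the quantitative version in the $p$-Banach setting of \cite{DKK2003}*{Lemma 8.2} and \cite{GHO2013}*{Theorem 5.1} given by \cite{AAW2020}*{Theorem 5.1} and already cited in the introduction) furnishes $\unc_m\lesssim \qg_m(\log m)^{1/p}$ with constant depending only on $p$. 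Chaining the two inequalities yields $\unc_m\lesssim \leba_m(\log m)^{1/p}$, and assembling both bounds completes the proof.

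The main obstacle is the step $\sq_m\lesssim \sqd_m$, which is the only place where the UGH hypothesis actually enters. Passing from the full squeeze symmetry parameter to its disjoint counterpart is not possible for a general basis: by Lemma~\ref{lem:SLCSSD}, the best general inequality is the quadratic $\sq_m\le (\sqd_m)^2$, which would degrade the final estimate by a factor of $\leba_m$ and thereby destroy the advertised linear dependence on $\leba_m$. The UGH property, through Lemma~\ref{lem:UGH}, is exactly the ingredient that bridges this gap and makes the approach work.
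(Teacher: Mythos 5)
Your reduction via Theorem~\ref{thm:main} and your treatment of the $\sq_m$ branch (Lemma~\ref{lem:UGH} to pass to $\sqd_m$, then Theorem~\ref{thm:MainNotMain}) coincide with the paper's argument, and your remark that the generic quadratic bound $\sq_m\le(\sqd_m)^2$ would destroy the linear dependence on $\leba_m$ is exactly the right diagnosis of where UGH is needed. The gap is in the $\unc_m$ branch. You invoke an inequality of the form $\unc_m\le C(p)\,\qg_m\,(\log m)^{1/p}$ with $C(p)$ depending only on $p$, i.e.\ with \emph{linear} dependence on the $m$th quasi-greedy parameter. Neither the paper nor the references you point to establish this. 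What \cite{DKK2003}*{Lemma 8.2}, \cite{GHO2013}*{Theorem 5.1} and \cite{AAW2020}*{Theorem 5.1} give is that a \emph{quasi-greedy} basis (one with $\sup_m\qg_m<\infty$) satisfies $\unc_m=O((\log m)^{1/p})$, with an implied constant whose dependence on the quasi-greedy constant is not controlled linearly: those proofs run through the truncation operators, and already the passage from $\qg_m$ to $\rto_m$ costs $\rto_m\le C(\qg_m)^{1+1/p}$ when $p<1$ (inequality \eqref{eq:RTOQG:p}); even for $p=1$ the standard level-set argument produces at least a quadratic dependence on the quasi-greedy constant. With a superlinear dependence your chain only yields $\leb_m\lesssim(\leba_m)^{\alpha}(\log m)^{1/p}$ for some $\alpha>1$, which is strictly weaker than the statement.

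The paper avoids this by routing the unconditionality parameters through the squeeze symmetry parameters instead of the quasi-greedy ones: Theorem~\ref{eq:dualizeSS} gives $\unc_m\le C\,\sq_m\,(H_m[\prim])^{1/p}$, which is \emph{linear} in $\sq_m$, and the Remark following \eqref{eq:domLorenz} shows $H_m[\prim]\lesssim\log m$ because the fundamental function $\prim$ is doubling. One then applies the UGH hypothesis a second time, via Lemma~\ref{lem:UGH} and Theorem~\ref{thm:MainNotMain}, to obtain $\unc_m\lesssim\sqd_m(\log m)^{1/p}\lesssim\leba_m(\log m)^{1/p}$. Replacing your $\unc_m$ step by this one repairs the argument; the rest of your proof stands.
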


\begin{proof}
Just combine Lemma~\ref{lem:UGH}, Theorem~\ref{eq:dualizeSS}, Theorem~\ref{thm:main} and Theorem~\ref{thm:MainNotMain}.
\end{proof}

Given a basis $\XB$ of a quasi-Banach space $\XX$, the identity
\[
|A|=\Ind_{\varepsilon,A}[\XB^*,\XX^*] (\Ind_{\overline\varepsilon,A}[\XB,\XX]), \quad A\subseteq\NN,\; \varepsilon\in\EE^A,
\]
yields
\[
m \le \lsdf[\XB,\XX](m) \usdf[\XB^*,\XX^*](m), \quad m\in\NN.
\]
To quantify the optimality of this inequality, we introduce the parameters
\[
\wbid_m[\XB,\XX]= \frac{1}{m} \lsdf[\XB,\XX](m) \usdf[\XB^*,\XX^*](m), \quad m\in\NN.
\]
Since there are quite a few bases that satisfy the condition $\sup_m \wbid_m<\infty$, called property ($\bm{D}^*$) in \cite{BBGHO2018}, the following elementary lemma could be of interest.
\begin{lemma}
Let $\XB=(\xx_n)_{n=1}^\infty$ be a basis of a quasi-Banach space $\XX$. Then,
\[
\sq_m[\XB,\XX] \le \wbid_m[\XB,\XX]\, \sdem_m[\XB,\XX], \quad m \in\NN.
\]
\end{lemma}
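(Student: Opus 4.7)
My plan is to prove $\sq_m\le \wbid_m\sdem_m$ by establishing two intermediate inequalities that factor through the bidemocracy parameter: first $\sq_m\le \bid_m$ (a bidemocracy bound) and then $\bid_m\le \sdem_m\wbid_m$ (a structural inequality between the fundamental functions).

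For the first step I would invoke Lemma~\ref{lem:SSParameters}~\ref{SSParameters:1}. Given $f\in\XX$ with $|\{n\in\NN\colon|\xx_n^*(f)|\ge 1\}|\ge m$, pick any $B\subseteq\{n\colon|\xx_n^*(f)|\ge 1\}$ with $|B|=m$. Evaluating the functional $\Ind_{\overline{\varepsilon(f)}|_B, B}[\XB^*,\XX^*]$ on $f$ gives
\[
m \le \sum_{n\in B}|\xx_n^*(f)| = \Ind_{\overline{\varepsilon(f)}|_B, B}[\XB^*,\XX^*](f) \le \usdf[\XB^*,\XX^*](m)\,\Vert f\Vert,
\]
so $\Vert f\Vert\ge m/\usdf[\XB^*,\XX^*](m)$. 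Taking the infimum over such $f$ and using Lemma~\ref{lem:SSParameters}~\ref{SSParameters:1} yields $\fss_m\le \usdf[\XB^*,\XX^*](m)/m$, and multiplying through by $\usdf(m)$ gives $\sq_m=\fss_m\usdf(m)\le \bid_m$.

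For the second step, I would unpack definitions: $\bid_m\le \sdem_m\wbid_m$ is equivalent to $\usdf(m)\le \sdem_m\,\lsdf(m)$. Fix $(A,\varepsilon)$ with $|A|\le m$, $\varepsilon\in\EE^A$, and $\Vert \Ind_{\varepsilon,A}\Vert$ close to $\usdf(m)$. For every $B'\subseteq\NN$ with $|B'|=|A|$ and $\delta\in\EE^{B'}$, the very definition of $\sdem_m$ gives $\Vert \Ind_{\varepsilon,A}\Vert\le \sdem_m\,\Vert \Ind_{\delta,B'}\Vert$. Taking the infimum over admissible $(B',\delta)$ produces $\Vert \Ind_{\varepsilon,A}\Vert\le \sdem_m\,\lsdf(|A|)\le \sdem_m\,\lsdf(m)$, where the last step uses the monotonicity of the lower super-democracy function. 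Sweeping over $(A,\varepsilon)$ gives $\usdf(m)\le \sdem_m\,\lsdf(m)$, and multiplying by $\usdf[\XB^*,\XX^*](m)/m$ yields the promised inequality $\bid_m\le \sdem_m\wbid_m$. Chaining the two steps completes the proof.

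The main obstacle is securing the bound $\lsdf(|A|)\le \lsdf(m)$ for $|A|\le m$, i.e., the monotonicity of $\lsdf$. In Banach spaces this is routine through a sign-averaging argument; in the $p$-Banach setting, a slightly more careful application of Lemma~\ref{lem:SignVsChar} (possibly at the cost of an Aoki--Rolewicz-type constant that can be absorbed into $\sdem_m$) is needed, but the structural content of the argument is unchanged.
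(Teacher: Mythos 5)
Your strategy---factoring the inequality as $\sq_m\le\bid_m\le\wbid_m\,\sdem_m$---is genuinely different from the paper's, which bounds $t\Vert\Ind_{\varepsilon,A}\Vert$ directly by pairing $f$ with the dual functional $\Ind_{\overline{\varepsilon(f)},B}[\XB^*,\XX^*]$, applying super-democracy to pass from $A$ to an arbitrary set $D$ of the same cardinality, and then invoking Lemma~\ref{lem:SSParameters}. Your first step is exactly (half of) Proposition~\ref{prop:SSBiDem} and is correct, as is the reduction of the second step to $\usdf(m)\le\sdem_m\,\lsdf(m)$.

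The gap is the step $\lsdf(|A|)\le\lsdf(m)$. The lower super-democracy function is an infimum over sets of cardinality \emph{exactly} $k$, and it is not non-decreasing: the paper pointedly omits $\lsdf$ from its list of monotone parameters, and the monotone envelope $\ldf$ is introduced in Section~\ref{examplesdeleixample} precisely because this infimum need not increase with $k$ (cf.\ \cite{Woj2014}). Sign-averaging does not rescue this: it shows that the \emph{averages} $\Ave_{\varepsilon\in\EE^{B}}\Vert\Ind_{\varepsilon,B}\Vert$ are monotone under inclusion, but the infimum over signs on a set of cardinality $m$ can be far below that average, so no comparison with $\lsdf(k)$ for $k<m$ follows. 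The correct repair bypasses $\lsdf(k)$ entirely: enlarge $A$ to a set $A'$ with $|A'|=m$, write $\Ind_{\varepsilon,A}=\Ave_{\delta\in\EE^{A'\setminus A}}\Ind_{(\varepsilon,\delta),A'}$ so that in a Banach space $\Vert\Ind_{\varepsilon,A}\Vert\le\sup_{\delta}\Vert\Ind_{(\varepsilon,\delta),A'}\Vert\le\sdem_m\,\lsdf(m)$, and only then infimize. This closes your argument with constant $1$ in the locally convex case; in a $p$-Banach space the same enlargement via Lemma~\ref{lem:SignVsChar} costs a factor $A_p$, so your route yields $\sq_m\le A_p\,\wbid_m\,\sdem_m$ rather than the constant-free inequality in the statement. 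The extra constant cannot simply be ``absorbed into $\sdem_m$'' if the lemma is to be proved as stated.
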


\begin{proof}
Let $B$ be a greedy set of cardinality $m$ of $f\in\XX$. Set $t=\min_{n\in B} |\xx_n^*(f)|$. Let $A$ and $D$ be subsets of $\NN$ of cardinality $m$, and let $\varepsilon\in\EE^A$ and $\delta\in\EE^D$. Then,
\begin{align*}
t \Vert \Ind_{\varepsilon,A}[\XB,\XX] \Vert
&=\frac{tm\Vert \Ind_{\varepsilon,A}[\XB,\XX] \Vert \, \Vert \Ind_{\delta,D}[\XB,\XX] \Vert \, \Vert \Ind_{\overline{\varepsilon(f)},B}[\XB^*,\XX^*] \Vert }
{m\Vert \Ind_{\delta,D}[\XB,\XX] \Vert \, \Vert \Ind_{\overline{\varepsilon(f)},B}[\XB^*,\XX^*]\Vert}\\
&\le \sdem_m \frac{\Vert \Ind_{\delta,D} [\XB,\XX]\Vert \usdf[\XB^*,\XX^*](m) }{m}
\frac{ | \Ind_{\overline{\varepsilon(f)},B}[\XB^*,\XX^*](f) |}{\Vert \Ind_{\overline{\varepsilon(f)},B}[\XB^*,\XX^*]\Vert}\\
&\le \sdem_m \frac{\Vert \Ind_{\delta,D} [\XB,\XX] \Vert \usdf[\XB^*,\XX^*](m) }{m} \Vert f\Vert.
\end{align*}
Taking the infimum on $D$ and $\delta$ we obtain the desired inequality.
\end{proof}

Let us record an easy criterium for property ($\bm{D}^*$).
\begin{lemma}
Let $\XB$ a basis of a quasi-Banach space $\XX$ wich dominates a symmetric basis $\XB_1$ of a Banach space $\XX_1$. Suppose that there is a sequence $(A_m)_{m=1}^\infty$ in $\NN$ with $|A_m|=m$ for all $m\in\NN$, and
\[
\sup_m \frac{ \Vert \Ind_{A_m}[\XB,\XX]\Vert }{\usdf[\XB_1,\XX_1](m)}<\infty.
\]
Then $\XB$ has the property ($\bm{D}^*$).
\end{lemma}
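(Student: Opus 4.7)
The plan is to control the two factors in $\wbid_m[\XB,\XX]=\tfrac{1}{m}\lsdf[\XB,\XX](m)\,\usdf[\XB^*,\XX^*](m)$ separately against the fundamental functions of $\XB_1$ and its dual, and then to exploit the bidemocracy enjoyed by symmetric bases of Banach spaces.

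For the first factor, the hypothesis does the work directly: the all-ones sign vector on $A_m$ is admissible in the infimum defining the lower super-democracy function, so
\[
\lsdf[\XB,\XX](m)\le \Vert \Ind_{A_m}[\XB,\XX]\Vert \le K\,\usdf[\XB_1,\XX_1](m),
\]
where $K$ is the finite supremum in the hypothesis. For the second factor, I dualize the domination: if $S\colon\XX\to\XX_1$ is bounded with $S(\xx_n)=\yy_n$, then biorthogonality forces $S^*(\yy_n^*)=\xx_n^*$, so $\XB_1^*$ dominates $\XB^*$ via $S^*$. Since the dual basis of a symmetric basis in a Banach space is itself symmetric in its closed linear span, for every $|A|\le m$ and $\varepsilon\in\EE^A$ we obtain
\[
\Vert \Ind_{\varepsilon,A}[\XB^*,\XX^*]\Vert = \Vert S^*(\Ind_{\varepsilon,A}[\XB_1^*,\XX_1^*])\Vert \lesssim \Vert S\Vert\, \usdf[\XB_1^*,\XX_1^*](m),
\]
so $\usdf[\XB^*,\XX^*](m)\lesssim \usdf[\XB_1^*,\XX_1^*](m)$.

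Multiplying the two bounds, property $(\bm{D}^*)$ reduces to the estimate $\usdf[\XB_1,\XX_1](m)\,\usdf[\XB_1^*,\XX_1^*](m)\lesssim m$, i.e., to the bidemocracy of $\XB_1$. This is a standard consequence of the $1$-symmetric renorming of $\XX_1$: under that renorming all the norms $\Vert\Ind_{\varepsilon,A}[\XB_1,\XX_1]\Vert$ with $|A|=m$ collapse to $\usdf[\XB_1,\XX_1](m)$, and a permutation-averaging argument produces a dual functional that realizes the natural pairing $\Ind_A^*(\Ind_A)=m$ nearly optimally. The main obstacle I anticipate is precisely this last step, which fails outside the locally convex setting; this is why the hypothesis requires $\XX_1$ to be a Banach space rather than merely a quasi-Banach space. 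Once it is granted, $\sup_m \wbid_m[\XB,\XX]<\infty$ is immediate.
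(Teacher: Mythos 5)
Your proof is correct and follows the same route as the paper's own (two-line) argument: dualize the domination operator $S\colon\XX\to\XX_1$ to get $\usdf[\XB^*,\XX^*](m)\le \Vert S\Vert\,\usdf[\XB_1^*,\XX_1^*](m)$, bound $\lsdf[\XB,\XX](m)$ by $\Vert \Ind_{A_m}[\XB,\XX]\Vert$ via the hypothesis, and invoke the bidemocracy of symmetric bases of locally convex spaces (Lindenstrauss--Tzafriri, Proposition 3.a.6) -- which, as you correctly identify, is exactly where the assumption that $\XX_1$ is a Banach space is used. The paper merely sketches these steps, so your write-up supplies the same proof with the details filled in.
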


\begin{proof}
Just dualize the operator from $\XX$ into $\XX_1$ provided by the domination hypothesis, and use that any symmetric basis of a locally convex space is bidemocratic (see \cite{LinTza1977}*{Proposition 3.a.6}).
\end{proof}

For instance, in the case when $\max\{p,q\}\ge 1$, the unit vector system of the mix-norm spaces $\ell_p\oplus \ell_q$, $(\bigoplus_{n=1}^\infty \ell_q^n)_{\ell_p}$ and $\ell_p(\ell_q)$ fulfils the above criterium. The trigonometric system in $L_p(\TT)$, $1<p\le \infty$, also does.

\begin{remark}
If a basis is democratic, its squeeze symmetry parameters and the truncation quasi-greedy parameters are of the same order. If a basis is either truncation quasi-greedy or has the property ($\bm{D}^*$), then its squeeze symmetry parameters and the superdemocracy parameters are of the same order. Hence, combining Theorem~\ref{thm:main}, Theorem~\ref{thm:MainNotMain}, Proposition~\ref{prop:SSBiDem}, Proposition~\ref{prop:QGDual}, Proposition~\ref{cor:AGDual}, \eqref{eq:GreedyDual} and Theorem~\ref{eq:dualizeSS} overrides \cite{BBGHO2018}*{Corollaries 7.2, 7.5 and 7.6}.
\end{remark}

\section{The spectrum of Lebesgue-type parameters associated to democratic bases}\label{sect:DLP}\noindent 
In greedy approximation theory, democracy can be regarded as an \emph{atomic} property of bases, in the sense that it cannot be broken into (or that it implies) other properties of interest in the theory. When combined with other properties of bases (atomic or otherwise) democracy gives rise to a spectrum of more complex types of bases whose hierarchy can be summarized in the following diagram:

\begin{equation}\label{eq:Implications}\tag{$\maltese$}
\begin{gathered}
\xymatrix{
\text{Greedy} \ar@<3pt>@{=>}[r]\ar@{=>}[d]&\text{Unconditional} \ar@<3pt>@{==>}[l] \ar@{=>}[d]\\
\text{Almost greedy}\ar@<3pt>@{=>}[r]\ar@{=>}[d]&\text{Quasi-greedy}\ar@<3pt>@{==>}[l] \ar@{=>}[d]\\
\text{Squeeze symmetric}\ar@<3pt>@{=>}[r]\ar@{=>}[d]&\text{truncation quasi-greedy} \ar@<3pt>@{==>}[l] \ar@{=>}[d]\\
\text{SLC}\ar@<3pt>@{=>}[r]\ar@{=>}[d]&\text{QGLC} \ar@<3pt>@{==>}[l] \ar@{=>}[d]\\
\text{Super-democratic} \ar@<3pt>@{=>}[r]&\text{UCC} \ar@<3pt>@{==>}[l]\\
}
\end{gathered}
\end{equation}
When a property on the right hand-side column amalgamates with democracy it is transformed in the corresponding property on its left. For instance, democratic + unconditional $\Rightarrow$ greedy, democratic + quasi-greedy $\Rightarrow$ almost greedy, and so on.

Quantitatively, each implication in \eqref{eq:Implications} follows as a result of an estimate between the Lebesgue-type parameters associated to each property. Let us write down the relations between any two parameters associated to the properties from the left column of \eqref{eq:Implications}.

It is clear that
\begin{align}
\dem_m\le \sdem_m\le & \slc_m,\; \text{ and that } \label{eq:DLChain1}\\
&\slcd_m \le \leba_m \le \leb_m,\quad m\in\NN \label{eq:DLChain2}.
\end{align}
Inequalities \eqref{eq:DLChain1} and \eqref{eq:DLChain2} will be connected using the equivalence between the SLC parameters and the disjoint SLC parameters provided by the following proposition.

\begin{proposition}\label{prop:SLCDisjoint}
Let $\XB$ be a basis of a $p$-Banach space $\XX$, $0<p\le 1$. Then
\begin{equation} \label{eq:SLCDisjoint}
\slc_m[\XB,\XX]\le 2^{1/p-1} \FieldC^{1/p} A_p \slcd_m[\XB,\XX], \quad m\in\NN.
\end{equation}
\end{proposition}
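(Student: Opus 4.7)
My plan is to deduce the estimate $\slc_m\le 2^{1/p-1}\FieldC^{1/p}A_p\slcd_m$ by reducing the general SLC situation to the disjoint SLC one. The reduction splits $A$ and $B$ along their intersection and handles the overlap via the sign-change machinery of Lemma~\ref{lem:SignVsChar}.

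By a standard density/perturbation argument we may assume $f$ has finite support. Fix $A,B\subseteq\NN$ with $|A|=|B|\le m$, signs $\varepsilon\in\EE^A$, $\delta\in\EE^B$, and $f\in\XX$ with $\Vert f\Vert_\infty\le 1$ and $\supp(f)\cap (A\cup B)=\emptyset$. Decompose
\[
A=A_1\sqcup A_2,\qquad B=A_1\sqcup B_2,\qquad A_1:=A\cap B,\; A_2:=A\setminus B,\; B_2:=B\setminus A,
\]
so $|A_2|=|B_2|\le m$. The ``free half'' $(A_2,B_2)$ is already disjoint and will be handled by one application of $\slcd_m$; the ``locked half'' $A_1$ carries the sign mismatch $\varepsilon|_{A_1}$ versus $\delta|_{A_1}$ that must be absorbed into the constant.

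The first step is to install the \emph{correct} signs $\delta|_{A_1}$ on the overlap. Setting $\tilde f:=f+\Ind_{\delta|_{A_1},A_1}$, one has $\Vert\tilde f\Vert_\infty\le 1$ and $\supp(\tilde f)\cap(A_2\cup B_2)=\emptyset$, so the disjoint SLC parameter gives
\[
\Vert \Ind_{\varepsilon|_{A_2},A_2}+\tilde f\Vert\;\le\;\slcd_m\,\Vert \Ind_{\delta|_{B_2},B_2}+\tilde f\Vert=\slcd_m\,\Vert\Ind_{\delta,B}+f\Vert.
\]
Writing $g:=\Ind_{\varepsilon|_{A_2},A_2}+f$ (which also satisfies $\Vert g\Vert_\infty\le 1$ and $\supp(g)\cap A_1=\emptyset$), the original quantity $\Vert\Ind_{\varepsilon,A}+f\Vert=\Vert g+\Ind_{\varepsilon|_{A_1},A_1}\Vert$ and the expression above $\Vert \Ind_{\varepsilon|_{A_2},A_2}+\tilde f\Vert=\Vert g+\Ind_{\delta|_{A_1},A_1}\Vert$ differ only through the sign pattern on $A_1$. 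I would therefore insert $\Ind_{\delta|_{A_1},A_1}$ by the $p$-subadditivity
\[
\Vert g+\Ind_{\varepsilon|_{A_1},A_1}\Vert^p\;\le\;\Vert g+\Ind_{\delta|_{A_1},A_1}\Vert^p+\bigl\Vert\textstyle\sum_{n\in A_1}(\varepsilon_n-\delta_n)\xx_n\bigr\Vert^p,
\]
where the modulus of concavity $2^{1/p-1}$ of the $p$-Banach space appears when the $p$-sum is converted back to a sum of norms. The residual $\sum_{n\in A_1}(\varepsilon_n-\delta_n)\xx_n$ has coefficients of modulus at most $2$, hence equals $2\sum_{n\in A_1}a_n\xx_n$ with $|a_n|\le 1$; so it is bounded by $2K[A_1,0]$, and I would invoke Lemma~\ref{lem:SignVsChar} in the sharp form $K[A_1,0]\le\FieldC^{1/p}\,M[\delta|_{A_1},A_1,0]\le\FieldC^{1/p}A_p\,N[\delta|_{A_1},A_1,0]$, which is valid with constant $\FieldC^{1/p}$ precisely because here $f=0$.

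The main obstacle is Step~4: to keep the factor of $\slcd_m$ \emph{linear}, the partial-sum supremum
\[
N[\delta|_{A_1},A_1,0]=\sup_{D\subseteq A_1}\Vert\Ind_{\delta|_D,D}\Vert
\]
must be controlled by $\slcd_m\,\Vert\Ind_{\delta,B}+f\Vert$ without spawning a second power of $\slcd_m$. My plan here is to note that each $D\subseteq A_1\subseteq B$ has $|D|\le m$ and $D\cap\supp(f)=\emptyset$, so $\slcd_m$ applied once (with $f$ in the r\^ole of the perturbation and a disjoint bridge set of cardinality $|D|$ chosen inside $\NN\setminus(A\cup B\cup\supp(f))$, possible because $f$ has finite support) reduces each $\Vert\Ind_{\delta|_D,D}\Vert$ to a quantity directly comparable with $\Vert\Ind_{\delta,B}+f\Vert$. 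Combining the two estimates with the $p$-subadditivity and gathering the constants $2^{1/p-1}$ (from the quasi-triangle), $\FieldC^{1/p}$ (from Lemma~\ref{lem:SignVsChar} with $f=0$), $A_p$ (from $M\le A_p N$), and $\slcd_m$ (from the single disjoint swap) yields the claimed inequality. The delicate point throughout is to make sure that the bound on $R$ produces the clean constant $\FieldC^{1/p}A_p\slcd_m$ rather than the weaker $(1+\FieldC)^{1/p}A_p\slcd_m$ one would get by applying Lemma~\ref{lem:SignVsChar} to a non-zero base; this forces us to isolate the sign mismatch from $g$ by the quasi-triangle first, which in turn is exactly why the factor $2^{1/p-1}$ appears.
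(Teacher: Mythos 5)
Your splitting of $A$ and $B$ along $A_1=A\cap B$ and the first application of $\slcd_m$ to the disjoint pair $(A_2,B_2)$ with perturbation $\tilde f=f+\Ind_{\delta|_{A_1},A_1}$ are fine, but the argument breaks at the step you yourself flag as the main obstacle. After the $p$-triangle inequality you must control the residual $R=\sum_{n\in A_1}(\varepsilon_n-\delta_n)\xx_n$, which via Lemma~\ref{lem:SignVsChar} you reduce to $N[\delta|_{A_1},A_1,0]=\sup_{D\subseteq A_1}\Vert\Ind_{\delta|_D,D}\Vert$ --- a \emph{bare} norm with no perturbation attached. The claim that a ``disjoint bridge set'' bounds this by $\slcd_m\,\Vert\Ind_{\delta,B}+f\Vert$ in one application is not substantiated and, as far as I can see, cannot be carried out: $\slcd_m$ only compares $\Vert\Ind_{\eta,A'}+h\Vert$ with $\Vert\Ind_{\delta',B'}+h\Vert$ for disjoint $A',B'$ and the \emph{same} $h$. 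A bridge set $E$ with $|E|=|D|$ gives $\Vert\Ind_{\delta|_D,D}\Vert\le\slcd_m\Vert\Ind_{\eta,E}\Vert$, which merely relocates the problem; taking $h=\Ind_{\delta,B\setminus D}+f$ instead bounds $\Vert\Ind_{\eta,E}+h\Vert$, not $\Vert\Ind_{\eta,E}\Vert$. Every attempt to strip the perturbation leads to comparing $\Vert\Ind_{\mu,B}-f\Vert$ with $\Vert\Ind_{\delta,B}+f\Vert$ on the \emph{same} set $B$ --- precisely the non-disjoint estimate being proved, so the argument is circular --- or else costs a second factor of $\slcd_m$, which only recovers the trivial bound $\slc_m\le(\slcd_m)^2$. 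Moreover, even granting this step with constant one, your accounting yields $\Vert\Ind_{\varepsilon,A}+f\Vert^p\le(1+2^p\FieldC A_p^p)(\slcd_m)^p\Vert\Ind_{\delta,B}+f\Vert^p$, and $1+2^p\FieldC A_p^p>2^{1-p}\FieldC A_p^p$ for all $0<p\le1$, so the stated constant is not reached.

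The paper's proof avoids the bare residual altogether. Fixing $B$, $\delta$ (extended arbitrarily to $\EE^{\NN}$) and $f$, it shows directly that $\Vert\Ind_{\delta,D}+f\Vert\le 2^{1/p-1}\slcd_m\Vert\Ind_{\delta,B}+f\Vert$ for \emph{every} $D$ with $|D|\le|B|$ and $D\cap\supp(f)=\emptyset$, overlap with $B$ allowed: absorb $D\cap B$ into the perturbation by setting $g=\Ind_{\delta,D\cap B}+f$, pad $D\setminus B$ with a fresh set $E$ of cardinality $|B|-|D|$ so that $|(D\setminus B)\cup E|=|B\setminus D|$, and use the averaging identity $2h=(h+u)+(h-u)$ with $u=\Ind_{\delta,E}$; each of the two resulting terms is then a single legitimate disjoint comparison of $(D\setminus B)\cup E$ against $B\setminus D$ relative to $g$. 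This controls all of $N[\delta|_A,A,f]$ \emph{with the perturbation $f$ attached} --- which is exactly what makes one factor of $\slcd_m$ suffice --- and a single concluding application of Lemma~\ref{lem:SignVsChar} then passes to arbitrary signs $\varepsilon$ on all of $A$ at once, with no separate treatment of $A\cap B$.
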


\begin{proof}
Let $B\subseteq\NN$ with $|B|\le m$, $\delta\in\EE^B$, and $f\in\XX$ be finitely supported with $B\cap\supp(f)=\emptyset$. Pick an arbitrary extension of $\delta$ to $\EE^{\NN}$, which we still denote by $\delta$. Given $D\subseteq \NN$ with $|D|\le |B|$ and $D\cap \supp(f)=\emptyset$, we pick $E\subseteq \NN\setminus (B\cup D\cup \supp(f))$ with $|E|=|B|-|D|$. Set $g=\Ind_{\delta,D\cap B}+f$. Since the sets $D\setminus B$, $E$, $B\setminus D$ and $\supp(g)$ are pairwise disjoint, and $|D\setminus B|+|E|=|D\setminus B|$,
\begin{align*}
\Vert \Ind_{\delta,D}+ f\Vert^p
&\le 2^{-p} (\Vert \Ind_{\delta,D}+\Ind_{\delta,E}+ f\Vert^p+ \Vert \Ind_{\delta,D}-\Ind_{\delta,E}+ f\Vert^p)\\
&= 2^{-p} (\Vert \Ind_{\delta,D\setminus B}+\Ind_{\delta,E}+ g\Vert^p+ \Vert \Ind_{\delta,D\setminus B}-\Ind_{\delta,E}+ g\Vert^p)\\
&\le 2^{1-p} (\slcd_m)^p\Vert \Ind_{\delta,B\setminus D}+g\Vert^p\\
&= 2^{1-p}(\slcd_m)^p \Vert \Ind_{\delta,B}+f\Vert^p.
\end{align*}
Therefore, applying Lemma~\ref{lem:SignVsChar} gives the desired inequality.
\end{proof}


Next we compare the SLC parameters and the squeeze symmetry parameters. To that end we will use the relation between the QGLC and the truncation quasi-greedy parameters. Note that the parameters on the left column of \eqref{eq:Implications} (i.e., the unconditionality-like parameters) of a basis of a $p$-Banach space, $0<p\le 1$, satisfy
\begin{equation}\label{eq:UTChain}
\ucc_m\le \qglc_m \le \min\{\rto_m, \qg_m\}, \quad \max\{ A_p^{-1} \rto_m, \qg_m\}\le \unc_m.
\end{equation}
Inequalities \eqref{eq:RTOQG:1} and \eqref{eq:RTOQG:p} complete the quantitative estimates associated with the right column of \eqref{eq:Implications}.
\begin{proposition}
Let $\XB$ be a basis of a $p$-Banach space $\XX$, $0<p\le 1$. Then,
\begin{equation}\label{eq:SLCSQ}
\slc_m[\XB,\XX]\le 2^{1/p} \sq_m[\XB,\XX], \quad m\in\NN.
\end{equation}
\end{proposition}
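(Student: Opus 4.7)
My plan is to invoke Lemma~\ref{lem:SSParameters}\ref{SSParameters:3} twice---once on each of the two sign-indicators appearing in the SLC ratio---and then combine the outputs with the $p$-subadditivity of the quasi-norm on the $p$-Banach space $\XX$.

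I would fix admissible data $A,B\subseteq\NN$ with $|A|=|B|\le m$, signs $\varepsilon\in\EE^A$ and $\delta\in\EE^B$, and $f\in\XX$ with $\|\Fou(f)\|_\infty\le 1$ and $\supp(f)\cap(A\cup B)=\emptyset$. Setting $g=\Ind_{\delta,B}+f$ and $g'=\Ind_{\varepsilon,A}+f$, the hypothesis $\|\Fou(f)\|_\infty\le 1$ together with $\supp(f)\cap B=\emptyset$ ensures that $B$ is a greedy set of $g$ with $\min_{n\in B}|\xx_n^*(g)|=1$. Applying Lemma~\ref{lem:SSParameters}\ref{SSParameters:3} to $g$ with the matched set $A$ (note $|A|=|B|\le m$) gives $\|\Ind_{\varepsilon,A}\|\le\sq_m\|g\|$; a second application, with $A$ replaced by $B$ and $\varepsilon$ replaced by $\delta$, yields $\|\Ind_{\delta,B}\|\le\sq_m\|g\|$.

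Next I would use the identity $f=g-\Ind_{\delta,B}$ together with $p$-subadditivity to derive the bound $\|f\|^p\le\|g\|^p+\|\Ind_{\delta,B}\|^p\le(1+\sq_m^p)\|g\|^p$. A final application of $p$-subadditivity to the disjoint decomposition $g'=\Ind_{\varepsilon,A}+f$ then produces
\[
\|g'\|^p\le\|\Ind_{\varepsilon,A}\|^p+\|f\|^p\le(1+2\sq_m^p)\|g\|^p.
\]
Since $\sq_m\ge 1$, the crude bookkeeping $1+2\sq_m^p\le 3\sq_m^p$ only yields the weaker estimate $\slc_m\le 3^{1/p}\sq_m$.

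The hard part will be to absorb the spurious $\|g\|^p$ and strengthen the estimate on $\|f\|$ to the cleaner inequality $\|f\|^p\le\sq_m^p\|g\|^p$; once that is in hand, the final $p$-subadditivity step gives $\|g'\|^p\le 2\sq_m^p\|g\|^p$ and the target constant $2^{1/p}$ falls out. I expect this sharpening to follow either from a direct invocation of Lemma~\ref{lem:SignVsChar} applied to the triple $(-\delta,B,g)$---observing that $N[-\delta,B,g]=\sup_{D\subseteq B}\|\Ind_{\delta,B\setminus D}+f\|$ is itself controlled by Lemma~\ref{lem:SSParameters}\ref{SSParameters:3} since each partial sum $\Ind_{\delta,B\setminus D}+f$ has $B\setminus D$ as a greedy set of size at most $m$---or from a sign-averaging identity in the spirit of the proof of Proposition~\ref{prop:SLCDisjoint}. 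Taking the supremum over admissible configurations in the resulting inequality $\|g'\|\le 2^{1/p}\sq_m\|g\|$ then yields $\slc_m\le 2^{1/p}\sq_m$.
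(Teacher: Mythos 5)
Your argument, as you yourself note, only delivers $\slc_m\le 3^{1/p}\sq_m$: writing $f=g-\Ind_{\delta,B}$ and applying $p$-subadditivity together with $\Vert\Ind_{\delta,B}\Vert\le\sq_m\Vert g\Vert$ inevitably leaves the extra summand $\Vert g\Vert^p$, and neither of the repairs you sketch closes that gap. The first is circular: Lemma~\ref{lem:SSParameters}~\ref{SSParameters:3} bounds $\Vert\Ind_{\eta,E}\Vert$ \emph{from above} by $\sq_m\Vert h\Vert$, i.e., it bounds $\Vert h\Vert$ from \emph{below}; it cannot control $N[-\delta,B,g]=\sup_{D\subseteq B}\Vert\Ind_{\delta,B\setminus D}+f\Vert$ from above by a multiple of $\Vert g\Vert$. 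Indeed each term $\Ind_{\delta,B\setminus D}+f=g-S_D(g)$ is precisely the kind of complemented greedy projection of $g$ whose norm you are trying to estimate in the first place. The second suggestion (a sign-averaging identity) is not carried out and would in any case introduce its own constants.

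The ingredient you are missing is the quasi-greedy-for-largest-coefficients machinery. The paper splits $\Ind_{\varepsilon,A}+f=\Ind_{\varepsilon,A}+(g-S_B(g))$ and controls the second summand by the QGLC-type parameter of the basis, exploiting that $g$ has unimodular coefficients on its greedy set $B$; this produces $(\slc_m)^p\le(\sq_m)^p+(\qglc_m)^p$ with no spurious additive term. The stated constant $2^{1/p}$ then falls out of the chain $\qglc_m\le\rto_m\le\sq_m$ provided by \eqref{eq:UTChain} and \eqref{eq:SSDemRTO} (that is, by Proposition~\ref{prop:SSDemRTO}). In other words, the sharpened bound $\Vert f\Vert\le\sq_m\Vert g\Vert$ that you correctly identify as the crux is obtained not from the squeeze-symmetry parameter directly but by routing through the QGLC and truncation-quasi-greedy parameters; without bringing those into play, the estimate stalls at $3^{1/p}$.
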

\begin{proof}
Applying the $p$-triangle law gives $(\slc_m)^p \le (\sq_m)^p+(\qglc_m)^p$. Combining this inequality with \eqref{eq:UTChain} and \eqref{eq:SSDemRTO}, we are done.
\end{proof}

Combining \eqref{eq:DLChain1}, \eqref{eq:DLChain2}, \eqref{eq:SLCDisjoint} and \eqref{eq:SLCSQ} yields
\begin{equation}\label{eq:DLChain}
\dem_m\lesssim \sdem_m \lesssim \slc_m \lesssim\min\{\sq_m, \leba_m\} \le \max\{\sq_m, \leba_m\}\lesssim \leb_m.
\end{equation}

The only path for connecting with implications the squeeze symmetry parameters and the almost greediness parameters seems to be through the corresponding unconditionality-like properties. Indeed, combining \eqref{eq:SSDemRTO}, \eqref{eq:DLChain2}, \eqref{eq:RTOQG:1} and \eqref{eq:RTOQG:p} yields,
for every basis $\XB$ of a $p$-Banach space $\XX$,
\[
\sq_m[\XB,\XX] \le C (\leba_m[\XB,\XX])^{\alpha},\]
where
\[
\alpha=\begin{cases} 2 & \text{ if } p=1, \\ 2+1/p & \text{ if } p<1,\end{cases}\]
and the constant $C$ depends only on $p$. Thus, the question is whether this asymptotic estimate can be improved.
\begin{question}\label{qt:EA}
Given $0<p\le 1$, is there a constant $C$ such that $\sq_m\le C \leba_m$ for every basis of a $p$-Banach space?
\end{question}
Note that an (unlikely) positive answer to Question~\ref{qt:EA} would allow to replace $(\sqd_m)_{m=1}^\infty$ with $(\sq_m)_{m=1}^\infty$ in Theorem~\ref{thm:MainNotMain}. It would also provide an alternative proof to the estimate $\sq_m\lesssim \leb_m$ (see Theorem~\ref{thm:main}). In the same line of thought, we wonder about the relation between the squeeze symmetry parameters and their disjoint counterpart, as well as where to place the latter in inequality \eqref{eq:DLChain}.

\begin{question}
By Lemma~\ref{lem:SLCSSD}, $\sq_m \lesssim (\sqd_m)^2$. Hence, by \eqref{eq:DLChain}, $\slc_m\lesssim (\sqd_m)^2$, $\sdem_m\lesssim (\sqd_m)^2$ and $\dem_m\lesssim (\sqd_m)^2$. Can any of these asymptotic estimates be improved?
\end{question}

To finish this section we see the quantitative estimates associated with each row in \eqref{eq:Implications}. Inequalities \eqref{eq:UncG} and \eqref{eq:SSDemRTO} do the job for the first and the third rows, respectively. As far as the fifth row is concerned, it readily follows from \cite{AABW2019}*{Lemma 2.2} that
\begin{equation}\label{eq:DemSD}
\max\{ \dem_m, \ucc_m\} \lesssim \sdem_m \lesssim \dem_m \ucc_m,\quad m\in\NN.
\end{equation}

The following result takes care of the estimates involving the parameters in the fourth row.
\begin{proposition}
Let $\XB$ be a basis of a quasi-Banach space $\XX$. There are constants $C_1$ and $C_2$ depending only on the modulus of concavity of $\XX$ such that
\begin{equation*}
\frac{1}{C_2} \max\{ \dem_m, \qglc_m\} \le \slc_m \le C_1\dem_m \qglc_m, \quad m\in\NN.
\end{equation*}
\end{proposition}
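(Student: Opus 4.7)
The plan is to establish the two bounds separately, since they require quite different arguments. The inequality $\dem_m\le\slc_m$ is immediate by setting $f=0$ and choosing all-positive sign sequences in the definition of $\slc_m$. The harder half of the lower bound, $\qglc_m\lesssim\slc_m$, comes from a sign-reflection trick: given $f\in\XX$ with $A$ a greedy set, $|A|\le m$, and $|\Fou(f)|=t$ constant on $A$, I would decompose $f=t(\Ind_{\varepsilon(f),A}+g)$, where $g:=(f-S_A(f))/t$ satisfies $\|g\|_\infty\le 1$ and $\supp(g)\cap A=\emptyset$. Applying the definition of $\slc_m$ with $B=A$, opposite signs, and remainder $-g$ (still admissible, since $\|-g\|_\infty\le 1$ and the support is disjoint from $A$) yields $\|\Ind_{\varepsilon(f),A}-g\|\le \slc_m\|\Ind_{\varepsilon(f),A}+g\|$. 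Combining this with the identity $2\Ind_{\varepsilon(f),A}=(\Ind_{\varepsilon(f),A}+g)+(\Ind_{\varepsilon(f),A}-g)$ and the $p$-triangle law then delivers $\|S_A(f)\|=t\|\Ind_{\varepsilon(f),A)}\|\lesssim\slc_m\|f\|$.

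For the upper bound $\slc_m\lesssim\dem_m\qglc_m$, I would first appeal to Proposition~\ref{prop:SLCDisjoint} to reduce to the disjoint case $A\cap B=\emptyset$ at the cost of a harmless constant. Fixing admissible data $A$, $B$, $\varepsilon$, $\delta$, $f$, set $h=\Ind_{\varepsilon,A}+f$ and $h'=\Ind_{\delta,B}+f$; the goal is to bound $\|h\|$ in terms of $\|h'\|$ in three steps. First, note that $B$ is a greedy set of $h'$ with $|\Fou(h')|$ constant on $B$, so the QGLC inequality gives $\|\Ind_{\delta,B}\|=\|S_B(h')\|\le\qglc_m\|h'\|$, and $p$-subadditivity applied to $f=h'-\Ind_{\delta,B}$ yields $\|f\|\lesssim\qglc_m\|h'\|$. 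Second, transfer the sign-and-set data from $B$ to $A$ through super-democracy, $\|\Ind_{\varepsilon,A}\|\le\sdem_m\|\Ind_{\delta,B}\|$, exploiting the estimate $\sdem_m\lesssim\dem_m\ucc_m\le\dem_m\qglc_m$ provided by \eqref{eq:DemSD} and \eqref{eq:UTChain}. Third, combine the two controls via $\|h\|^p\le\|\Ind_{\varepsilon,A}\|^p+\|f\|^p$.

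The main obstacle lies in the second step. A naive routing through super-democracy introduces an \emph{extra} factor of $\qglc_m$, since the bound $\sdem_m\lesssim\dem_m\ucc_m\le\dem_m\qglc_m$ already absorbs one power of $\qglc_m$ and the subsequent conversion $\|\Ind_{\delta,B}\|\le\qglc_m\|h'\|$ absorbs another, producing $\dem_m\qglc_m^2$ rather than the desired $\dem_m\qglc_m$. Tightening to the stated sharp bound requires fusing the sign manipulation with the projection step rather than performing them sequentially: the correct move is to apply the QGLC definition to a test function in which the sign sequence on $A$ is already present, such as $\Ind_{\varepsilon,A}+\Ind_{\delta,B}+f$ (whose greedy set can be taken to contain $A$, with constant modulus $1$ on $A$), thereby extracting $\|\Ind_{\varepsilon,A}\|$ in a single QGLC application while using democracy only once to compare the two characteristic sums.
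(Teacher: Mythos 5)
Your lower bound is correct and coincides with the paper's own argument: $\dem_m\le\slc_m$ is \eqref{eq:DLChain1}, and the reflection identity $2\Ind_{\varepsilon(f),A}=(\Ind_{\varepsilon(f),A}+g)+(\Ind_{\varepsilon(f),A}-g)$ together with $\Vert \Ind_{\varepsilon(f),A}-g\Vert=\Vert \Ind_{-\varepsilon(f),A}+g\Vert\le\slc_m\Vert \Ind_{\varepsilon(f),A}+g\Vert$ (same set, same remainder, opposite signs) gives $\qglc_m\le 2^{1/p-1}\slc_m$ via the $p$-triangle law, exactly as in the paper.

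The upper bound has a genuine gap, which to your credit you diagnose yourself: routing the sign transfer through $\sdem_m\lesssim\dem_m\ucc_m\le\dem_m\qglc_m$ produces $\dem_m(\qglc_m)^2$. But the repair you propose does not close it. Applying the QGLC bound to $g=\Ind_{\varepsilon,A}+\Ind_{\delta,B}+f$ with greedy set $A$ gives $\Vert\Ind_{\varepsilon,A}\Vert\le\qglc_m\Vert g\Vert$, and the quantity you are estimating sits inside $\Vert g\Vert$ on the right: the only available comparison is $\Vert g\Vert^p\le\Vert\Ind_{\varepsilon,A}\Vert^p+\Vert\Ind_{\delta,B}+f\Vert^p$, which leads to $(1-(\qglc_m)^p)\Vert\Ind_{\varepsilon,A}\Vert^p\le(\qglc_m)^p\Vert\Ind_{\delta,B}+f\Vert^p$ --- vacuous since $\qglc_m\ge 1$. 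Moreover, "using democracy only once to compare the two characteristic sums" still leaves you needing to pass between signed sums ($\Ind_{\varepsilon,A}$, $\Ind_{\delta,B}$) and unsigned ones ($\Ind_A$, $\Ind_B$), and your sketch offers no mechanism for that other than $\ucc_m$, which is precisely what must be avoided. The missing ingredient is Lemma~\ref{lem:SignVsChar}, which trades signs for subsets at the cost of an absolute constant. In your notation, with $h'=\Ind_{\delta,B}+f$: every $D\subseteq B$ is again a greedy set of $h'$ on which $|\Fou(h')|$ is constant, so a single round of QGLC yields $\Vert\Ind_{\delta,D}\Vert\le\qglc_m\Vert h'\Vert$ for all $D\subseteq B$ simultaneously; Lemma~\ref{lem:SignVsChar} upgrades this family to $\Vert\sum_{n\in B}a_n\,\xx_n\Vert\le\FieldC^{1/p}A_p\,\qglc_m\Vert h'\Vert$ for all $|a_n|\le 1$, in particular controlling the unsigned $\Vert\Ind_B\Vert$ with one factor of $\qglc_m$; democracy then gives $\Vert\Ind_{A'}\Vert\le\dem_m\Vert\Ind_B\Vert$ for every $A'$ with $|A'|\le|B|$; and a final application of Lemma~\ref{lem:SignVsChar} recovers $\Vert\Ind_{\varepsilon,A}+f\Vert$ from the unsigned subsets of $A$. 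This is the paper's proof; note that it requires neither your preliminary reduction via Proposition~\ref{prop:SLCDisjoint} nor disjointness of $A$ and $B$.
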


\begin{proof}
Assume that $\XX$ is a $p$-Banach space. Let $f\in\XX$ with $\Vert\Fou(f)\Vert_\infty\le 1$, let $A\subseteq \NN$ with $A\cap \supp(f)=\emptyset$ and $|A|\le m$, and let $\varepsilon=(\varepsilon_n)_{n\in A}\in\EE^A$. We have
\[
\Vert \Ind_{\varepsilon,A} \Vert^p
\le 2^{-p}( \Vert \Ind_{\varepsilon,A}+f \Vert + \Vert \Ind_{\varepsilon,A}-f \Vert)
\le 2^{1-p}\slc_m \Vert \Ind_{\varepsilon,A}+f\Vert^p.
\]
This yields $\qglc_m\le 2^{1/p-1}\slc_m$. Thus, by \eqref{eq:DLChain1}, the proof of left-side inequality is over.

For every $D\subseteq A$,
\[
\Vert \Ind_{\varepsilon,D} \Vert \le \qglc_m \Vert \Ind_{\varepsilon,A}+ f \Vert.\] Therefore, by Lemma~\ref{lem:SignVsChar},
\[
\left\Vert\sum_{n\in A} a_n\, \xx_n\right\Vert \le \FieldC^{1/p} A_p \qglc_m \Vert \Ind_{\varepsilon,A}+ f \Vert, \quad |a_n|\le 1.
\]
Consequently, for any $E\subseteq\NN$ with $|E|\le |A|$,
\begin{align*}
\Vert \Ind_E +f \Vert^p
&\le \Vert \Ind_E\Vert^p + \Vert \Ind_{\varepsilon,A}\Vert^p + \Vert \Ind_{\varepsilon,A} + f \Vert^p \\
& \le \left( 1+ (\qglc_m)^p + \FieldC A_p^p (\dem_m)^p (\qglc_m)^p \right) \Vert \Ind_{\varepsilon,A}+ f \Vert^p.
\end{align*}
Applying again Lemma~\ref{lem:SignVsChar} puts an end to the proof.
\end{proof}
Finally, we tackle the quantitative estimates for the parameters in the second row of \eqref{eq:Implications}. Combining Theorem~\ref{thm:MainNotMain} with inequalities \eqref{eq:RTOQG:1}, \eqref{eq:RTOQG:p}, and \eqref{eq:SSDemRTO} gives
\[
\max\{ \dem_m, \qg_m\} \lesssim \leba_m \lesssim \dem_m (\qg_m)^\beta,\; \text{ where }\; \beta=\begin{cases} 1 & \text{ if } p=1, \\ 1+1/p & \text{ if } p<1.\end{cases}
\]
Notice that the relations between the Lebesgue parameters involved in the properties from the second row follow the same pattern as the relations of the parameters of the other rows in the diagram \eqref{eq:Implications} only in the locally convex setting.

\section{Examples}\label{examplesdeleixample}\noindent
Before we study the applicability of our estimates to important examples in Analysis, we need to introduce another type of democracy functions.

Let $\XB$ be a basis of a quasi-Banach space $\XX$. Lemma~\ref{lem:SignVsChar} implies that if we modify definition \eqref{eq:udf} by taking the supremum only over sets with $|A|=m$, we obtain a function equivalent to the upper democracy function; and the same occurs if we restrict ourselves to $\varepsilon=1$. In contrast, the function
\[
\udf[\XB,\XX](m)= \sup_{|A|=m} \Vert \Ind_A[\XB,\XX] \Vert, \quad m\in\NN.
\]
can be much smaller than $\usdf[\XB,\XX]$, whereas the non-decreasing function
\[
\ldf[\XB,\XX](m)=\sup_{1\le k \le m} \inf_{|A|=k} \Vert \Ind_A[\XB,\XX] \Vert, \quad m\in\NN
\]
can be much larger than $\lsdf[\XB,\XX]$ (see \cite{Woj2014}). Lemma~\ref{lem:SignVsChar} also gives the inequality
\begin{equation}\label{eq:uppervslower}
\usdf[\XB,\XX](m) \le \Upsilon^{1/p} \dem_m[\XB,\XX]\, \ldf[\XB,\XX](m), \quad m\in\NN
\end{equation}
for any basis $\XB$ of any $p$-Banach space $\XX$, $0<p\le 1$.

Given $p\in[1,\infty]$, we will denote by $p'$ its \emph{conjugate exponent}, determined by the identity
$1/p'=1-1/p$. We also set $p^*=\max\{p,p'\}$.

\subsection{Orthogonal systems as bases of $L_p$}\label{sect:OSLp}
Let $(\Omega,\Sigma,\mu)$ be a finite measure space and let $\XB=(\xx_n)_{n=1}^\infty$ be an orthogonal basis of $L_2(\mu)$. Given $1\le p\le\infty$ such that
$
\XB\subseteq L_{p^*}(\mu),
$
$\XB$ also is a basis of $L_p(\mu)$ (of its closed linear span if $r=\infty$).

\begin{lemma}\label{lem:upos}
Let $\XB$ be an orthonormal basis of $L_2(\mu)$, where $(\Omega,\Sigma,\mu)$ be a finite measure space. Let $1\le q<2< p\le \infty$ and suppose that the unit vector system of $\ell_q$ dominates $\XB$ regarded as a sequence in $L_p(\mu)$. Given $0\le \lambda \le 1$, we define $p_\lambda^+\in[2,p]$, $p_\lambda^-\in[p',2]$, $q_\lambda^+\in[q,2]$ and $q_\lambda^-\in[2,q']$ by
\[
\frac{1}{p_\lambda^+}=(1-\lambda)\frac{1}{p} +\frac{\lambda}{2},\quad
p_\lambda^-=(p_\lambda^+)', \quad
\frac{1}{q_\lambda^+}=(1-\lambda)\frac{1}{q} +\frac{\lambda}{2},\quad
q_\lambda^-=(q_\lambda^+)'.
\]
Then, there is a constant $C$ such that, for all $\lambda\in[0,1]$ and $\varepsilon=\pm 1$,
\[
\max\{\unc_m[\XB,L_{p_\lambda^\varepsilon}(\mu)], \sq_m[\XB, L_{p_\lambda^\varepsilon}(\mu)]\}
\le C m^{(1-\lambda)(1/q-1/2)}, \quad m\in\NN.
\]
\end{lemma}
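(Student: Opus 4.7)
The plan is to deploy Riesz--Thorin complex interpolation on the synthesis and analysis operators associated to $\XB$, combined with the natural embeddings of $L_r$-spaces provided by the finiteness of $\mu$. The hypothesis that the unit vector system of $\ell_q$ dominates $\XB$ regarded as a basis in $L_p(\mu)$ is exactly that the synthesis map $T\colon\ell_q\to L_p(\mu)$, $e_n\mapsto \xx_n$, is bounded. Orthonormality gives that $T\colon\ell_2\to L_2(\mu)$ is an isometry, so Riesz--Thorin yields a bounded synthesis $T\colon\ell_{q_\lambda^+}\to L_{p_\lambda^+}(\mu)$ for every $\lambda\in[0,1]$. Taking adjoints (the coefficient transform $\Fou$ is precisely $T^*$, since $\XB$ is self-biorthogonal in $L_2$) produces the bounded analysis map $\Fou\colon L_{p_\lambda^-}(\mu)\to\ell_{q_\lambda^-}$.

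Using these two families of operators, together with the trivial embedding chain $L_{p_\lambda^+}(\mu)\subseteq L_2(\mu)\subseteq L_{p_\lambda^-}(\mu)$, I would compute $\usdf$ and $\fss_m$ separately on each side. In $L_{p_\lambda^+}$, the synthesis bound gives $\usdf(m)\le C m^{1/q_\lambda^+}$, while the embedding into $L_2$ combined with Parseval yields $\dr_m(f)\le m^{-1/2}\Vert f\Vert_2\le C m^{-1/2}\Vert f\Vert_{p_\lambda^+}$, so $\fss_m\le C m^{-1/2}$. Multiplying and simplifying the exponent gives
\[
\sq_m[\XB,L_{p_\lambda^+}(\mu)]\le \fss_m\, \usdf(m)\le C m^{1/q_\lambda^+-1/2}=C m^{(1-\lambda)(1/q-1/2)}.
\]
The case of $L_{p_\lambda^-}$ is dual: $\usdf$ is controlled via $L_2\hookrightarrow L_{p_\lambda^-}$ (giving $\usdf(m)\le C m^{1/2}$) and $\fss_m$ via the interpolated analysis map (giving $\fss_m\le C m^{-1/q_\lambda^-}$), with the product again equal to $C m^{(1-\lambda)(1/q-1/2)}$.

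For the unconditionality parameters, the cleanest route is to apply Riesz--Thorin directly to the coordinate projection $S_A\colon L_r(\mu)\to L_r(\mu)$ for each fixed $A$ with $|A|=m$. Since $S_A$ is self-adjoint with respect to the $L_2$ inner product, it suffices to control its norm at the endpoints: $\Vert S_A\Vert_{2\to 2}=1$ by orthogonality, while at $r=p$ we factor $S_A=T\circ(\chi_A\cdot)\circ\Fou$, invoke the embedding $L_p\subseteq L_2$, and apply H\"older's inequality $\Vert\cdot\Vert_{\ell_q(A)}\le m^{1/q-1/2}\Vert\cdot\Vert_{\ell_2(A)}$ to obtain $\Vert S_A\Vert_{p\to p}\le C m^{1/q-1/2}$. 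Interpolation then yields $\Vert S_A\Vert_{p_\lambda^+\to p_\lambda^+}\le C m^{(1-\lambda)(1/q-1/2)}$, and the case $p_\lambda^-$ follows by self-adjointness and duality.

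The main (small) obstacle is to avoid the suboptimal exponent $2(1-\lambda)(1/q-1/2)$ that results from naively multiplying H\"older losses on both the synthesis and the analysis side; the key economy is to use the \emph{free} orthonormality estimate (an embedding into $L_2$ on the $L_{p_\lambda^+}$ side, or out of $L_2$ on the $L_{p_\lambda^-}$ side) on exactly one of the two sides and save H\"older for the other. Organizing the argument through Riesz--Thorin on $S_A$ makes this economy transparent and handles both endpoints uniformly.
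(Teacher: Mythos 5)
Your proof is correct and follows essentially the same route as the paper: Riesz--Thorin interpolation of the synthesis operator $\ell_{q_\lambda^+}\to L_{p_\lambda^+}(\mu)$, the factorization through $\ell_2$ supplied by orthonormality and the finiteness of $\mu$, and duality for the $p_\lambda^-$ range. The only difference is one of packaging: the paper feeds the resulting chain of dominations into the abstract Lemma~\ref{lem:domunclocal} together with the identity $\dom_m[\ell_r,\ell_s]=m^{1/r-1/s}$, whereas you unpack that lemma by hand, computing $\fss_m$ and $\usdf$ separately for $\sq_m$ and interpolating $S_A$ directly between the endpoints $r=2$ and $r=p$ for $\unc_m$.
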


\begin{proof}
We denote by $\XB_{(r)}$ the system $\XB$ regarded as a basic sequence in $L_r(\mu)$, $1\le r\le\infty$.
By Riesz-Thorin's interpolation theorem (see, e.g., \cite{Grafakos2004}), the unit vector system of $\ell_{q_\lambda^+}$ dominates $\XB_{(p_\lambda^+)}$. In turn, since $L_{p_\lambda^+}(\mu)$ is continuously included in $L_2(\mu)$, $\XB_{(p_\lambda^+)}$ dominates the unit vector system of $\ell_2$. By duality, the unit vector system of $\ell_2$ dominates $\XB_{(p_\lambda^-)}$, which, in turn, dominates the unit vector system of $\ell_{q_\lambda^-}$. Applying Lemma~\ref{lem:domunclocal}, and taking into account \eqref{eq:domlplq}, yields the desired result.
\end{proof}

For uniformly bounded orthogonal systems we obtain the following.
\begin{lemma}\label{lem:ububos}
Let $(\Omega,\Sigma,\mu)$ be a finite measure space and $\XB=(\xx_n)_{n=1}^\infty$ be an orthonormal basis of $L_2(\mu)$ with $\sup_n \Vert \xx_n\Vert_\infty<\infty$. There is a constant $C$ such that, for all $1\le p \le \infty$,
\[
\frac{1}{C}\max\{\unc_m[\XB,L_p(\mu)], \sq_m[\XB, L_p(\mu)]\} \le \Phi_p(m):=m^{|1/p-1/2|}, \quad m\in\NN.
\]
\end{lemma}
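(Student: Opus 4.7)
The plan is to deduce this as a direct corollary of the previous Lemma~\ref{lem:upos}, applied with the extremal pair $q=1$ and $p=\infty$. The key observation is that uniform boundedness in $L_\infty(\mu)$ is exactly the $\ell_1$-domination hypothesis needed there.

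First I would note that if $M := \sup_n \Vert \xx_n\Vert_\infty < \infty$, then for every finitely supported scalar sequence $(a_n)$ one has
\[
\Bigl\Vert \sum_n a_n\, \xx_n \Bigr\Vert_\infty \le M \sum_n |a_n|,
\]
which is precisely the statement that the unit vector system of $\ell_1$ $M$-dominates $\XB$ when viewed as a basic sequence in $L_\infty(\mu)$. This puts us in position to invoke Lemma~\ref{lem:upos} with $q=1$ and $p=\infty$.

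Next I would unwind the parameters of Lemma~\ref{lem:upos} under these choices: for $\lambda\in[0,1]$ we get $p_\lambda^+ = 2/\lambda$ (with $p_0^+ = \infty$), $p_\lambda^- = 2/(2-\lambda)$, and the exponent in the conclusion simplifies to $(1-\lambda)(1/q-1/2) = (1-\lambda)/2$. To reach an arbitrary $r\in[1,\infty]$, I would split into two cases. For $r\in[2,\infty]$, set $\lambda = 2/r$, so that $p_\lambda^+ = r$ and the exponent becomes $(1-2/r)/2 = 1/2 - 1/r = |1/r-1/2|$. For $r\in[1,2]$, set $\lambda = 2 - 2/r$, so that $p_\lambda^- = r$ and the exponent becomes $(2/r-1)/2 = 1/r - 1/2 = |1/r-1/2|$. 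Either way, Lemma~\ref{lem:upos} yields the required bound $C\, m^{|1/r-1/2|}$, with a constant $C$ independent of $r$ (and thus of $p$).

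The borderline case $r=2$ is harmless: both parameter choices collapse to $\lambda = 1$, giving the trivial exponent $0$, which matches the fact that orthonormality of $\XB$ in $L_2(\mu)$ forces both $\unc_m[\XB,L_2(\mu)]$ and $\sq_m[\XB,L_2(\mu)]$ to equal $1$. There is no real obstacle here: the whole content of the lemma is already packaged inside Lemma~\ref{lem:upos}, and the only thing requiring care is the bookkeeping between the parameter $\lambda$ and the target Lebesgue exponent, together with the observation that with $q=1$ the quantity $|1/q-1/2|/2$ collapses to exactly the exponent $|1/p - 1/2|$ appearing in the statement.
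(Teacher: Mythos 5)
Your proposal is correct and is exactly the paper's argument: the paper's proof is the one-liner ``apply Lemma~\ref{lem:upos}, taking into account that the unit vector system of $\ell_1$ dominates $\XB_{(\infty)}$,'' and your parameter bookkeeping ($q=1$, $p=\infty$, with $\lambda=2/r$ or $\lambda=2-2/r$ according to the sign of $1/r-1/2$) is precisely the unwinding left implicit there.
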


\begin{proof}
Just apply Lemma~\ref{lem:upos}, taking into account that the unit vector system of $\ell_1$ dominates $\XB_{(\infty)}$.
\end{proof}

Let us obtain lower estimates for the parameters.
\begin{lemma}\label{lem:lbos}
Let $\XB$ be an orthogonal basis of $L_2(\mu)$, where $(\Omega,\Sigma,\mu)$ is a finite measure space. Let $1\le p\le\infty$ be such that $\XB\subseteq L_{p^*}(\mu)$.
\begin{enumerate}[label=(\roman*), leftmargin=*, widest=iii]
\item\label{los:1} If $1\le p \le 2$, there is a constant $C$ such that
\begin{align*} 
\ucc_m[\XB,L_{p}(\mu)]&\ge \frac{1}{C} \frac{m^{1/2}}{\lsdf[\XB,L_{p}(\mu)](m)} \text{ and }\\
\dem_m[\XB,L_{p}(\mu)]&\ge \frac{1}{C} \frac{m^{1/2}}{\ldf[\XB,L_{p}(\mu)](m)}, \quad m\in\NN.
\end{align*}
Moreover, $\XB$ has the upper gliding hump property for constant coefficients.
\item\label{los:2} If $2\le p <\infty$, there is a constant $C$ such that
\begin{align*}
\ucc_m[\XB,L_p(\mu)] & \ge \frac{1}{C} \frac{ \usdf[\XB,L_p(\mu)](m) }{ m^{1/2}} \text{ and}\\
\dem_m[\XB,L_p(\mu)]&\ge \frac{1}{C} \frac{ \udf[\XB,L_p(\mu)](m) }{ m^{1/2}}, \quad m\in\NN.
\end{align*}
\item\label{los:3} There is a constant $C$ such that
\[
\ucc_m[\XB,L_\infty(\mu)]\ge \frac{1}{C} \frac{m}{\lsdf[\XB,L_\infty(\mu)](m)}, \quad m\in\NN.
\]
\end{enumerate}
\end{lemma}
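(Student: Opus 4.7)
After normalizing so that $\XB$ is orthonormal in $L_2(\mu)$ (a permitted reduction, since a basis is always semi-normalized), we have $\|\Ind_{\varepsilon,A}\|_2^2=|A|$ for every finite $A\subseteq\NN$ and $\varepsilon\in\EE^A$. The three parts share the same template: bound $\ucc_m$ and $\dem_m$ from below via ratios $\|\Ind_{\varepsilon,A}\|_p/\|\Ind_{\varepsilon,B}\|_p$ with $A\subseteq B$, $|B|\le m$, the numerator being controlled via orthogonality and the denominator via the extremal characterization of $\lsdf$, $\ldf$, $\usdf$ or $\udf$. The tools are Parseval, the continuous embeddings $L_r(\mu)\hookrightarrow L_s(\mu)$ for $r\ge s$ arising from the finite-measure hypothesis, and Khintchine-type sign averaging.

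For part \ref{los:3}, the $L_\infty$-case admits a purely pointwise argument. For any $B$ of size $m$, any $\varepsilon\in\EE^B$, and any $\omega\in\Omega$ one has the identity
\[
\max_{A\subseteq B}\Bigl|\sum_{n\in A}\varepsilon_n x_n(\omega)\Bigr|=\sum_{n\in B}\bigl(\varepsilon_n x_n(\omega)\bigr)_+\ge\tfrac{1}{2}\sum_{n\in B}|x_n(\omega)|.
\]
Since the $L_\infty$-norm is a pointwise supremum, $\max_A$ commutes with $\sup_\omega$, giving $\max_{A\subseteq B}\|\Ind_{\varepsilon,A}\|_\infty\ge\tfrac{1}{2}\|\sum_{n\in B}|x_n|\|_\infty$. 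Combining this with the pointwise inequality $\sum|x_n(\omega)|\ge (\sum|x_n(\omega)|^2)^{1/2}$, Parseval, and the uniform bound $\sup_n\|x_n\|_\infty<\infty$ (which holds since $\XB$ is norm-bounded as a basis) yields $\|\sum_{n\in B}|x_n|\|_\infty\ge cm$. Choosing $B$ near-optimal for $\lsdf(m)$ completes the argument.

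For part \ref{los:2}, $2\le p<\infty$, Bessel's inequality together with the embedding $L_p\hookrightarrow L_2$ gives
\[
\Bigl(\sum_n|\xx_n^*(f)|^2\Bigr)^{1/2}\le\mu(\Omega)^{1/2-1/p}\|f\|_p,
\]
from which $\dr_m(f)\le c\|f\|_p/m^{1/2}$ and $\lsdf(m)\ge cm^{1/2}$. For the $\ucc_m$-bound, pick $A_0$ with $|A_0|=m$ and $\varepsilon_0\in\EE^{A_0}$ realizing $\usdf(m)$ up to a factor $2$, and consider extensions $B\supseteq A_0$ with $|B|\le m$ equipped with random signs on $B\setminus A_0$. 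The convexity bound $(a^2+b^2)^{p/2}\le 2^{p/2-1}(|a|^p+|b|^p)$ valid for $p\ge 2$, combined with Khintchine's inequality and Kahane's moment comparison, shows that the expected $L_p$-norm of such an extension is of order $m^{1/2}$; hence some realization $\varepsilon$ satisfies $\|\Ind_{\varepsilon,B}\|_p\le Cm^{1/2}$. Taking the ratio yields $\ucc_m\ge c\usdf(m)/m^{1/2}$. The democracy bound follows identically with all signs fixed to $+1$.

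For part \ref{los:1}, $1\le p\le 2$, duality through $L_{p'}$ (with $p'\ge 2$) is the natural approach. The orthogonality pairing $|A|=\int|\Ind_{\varepsilon,A}|^2\,d\mu\le\|\Ind_{\varepsilon,A}\|_p\|\Ind_{\varepsilon,A}\|_{p'}$ links $L_p$ and $L_{p'}$ estimates, and since $\XB^*=\XB$ and $\unc_m[\XB,L_p]=\unc_m[\XB,L_{p'}]$ for Banach spaces, the relevant unconditionality-like parameters are intertwined. One transfers part \ref{los:2} applied to $\XB$ as a basis of $L_{p'}$, combined with the consequence $\usdf[\XB,L_{p'}](m)\ge m/\lsdf[\XB,L_p](m)$ of the orthogonality pairing, into the bound $\ucc_m[\XB,L_p]\ge cm^{1/2}/\lsdf[\XB,L_p](m)$, and the $\dem_m$-estimate is analogous. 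The upper gliding-hump property for constant coefficients follows from the Hölder bound $\|\Ind_A\|_p\le\mu^{1/p-1/2}|A|^{1/2}$ together with a Khintchine averaging: among index sets $B$ of cardinality $|A|$ disjoint from any given finite $D$, the expected $L_p$-norm obtained by random sign averaging is of order $|A|^{1/2}$, so at least one such $B$ satisfies $\|\Ind_B\|_p\ge c|A|^{1/2}$. The main technical obstacle is the construction of the balancing extension in part \ref{los:2}, which requires a careful combination of Khintchine's inequality with Kahane's moment comparison to ensure the existence of a good realization; this same delicate step is precisely what must be transferred via duality to yield part \ref{los:1}.
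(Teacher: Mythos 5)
Your overall template (orthonormality, Kahane--Khintchine sign averaging, extremal sets for $\lsdf$, $\usdf$, etc.) is the right one, but the proposal is missing the one ingredient that makes all of these ratios legitimate lower bounds, and the key step of part \ref{los:2} is false as stated. The parameters $\ucc_m$ and $\dem_m$, and the gliding-hump property, compare a constant-coefficient vector with its \emph{coordinate projections}: the sign pattern in the numerator must be the restriction of the sign pattern in the denominator (and for $\dem_m$, $\udf$, $\ldf$ and Definition~\ref{def:UGH} the signs must all be $+1$). Sign averaging only produces \emph{some} good sign pattern, not the all-ones pattern and not the pattern realizing $\usdf(m)$ or $\lsdf(m)$, so ``taking the ratio'' is not yet a bound on $\ucc_m$ or $\dem_m$. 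The bridge is the paper's Lemma~\ref{lem:SignVsChar}, which converts $\sup_{\varepsilon\in\EE^A}\Vert\Ind_{\varepsilon,A}\Vert$ into $\sup_{E\subseteq A}\Vert\Ind_{\delta,E}\Vert$ for any \emph{fixed} $\delta$; you never invoke anything playing this role. Concretely, in part \ref{los:2} your ``balancing extension'' cannot work: if $|A_0|=m$ there is nothing to extend, and if you keep the signs $\varepsilon_0$ on $A_0$ (as you must for the ratio to bound $\ucc_m$), the expected norm of the extension is at least $\Vert\Ind_{\varepsilon_0,A_0}\Vert_p\approx\usdf(m)$, which for the trigonometric system in $L_p$, $p>2$, is $m^{1-1/p}\gg m^{1/2}$. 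The correct move keeps the \emph{set} and changes the \emph{signs}: Kahane--Khintchine gives $\delta\in\EE^{A_0}$ with $\Vert\Ind_{\delta,A_0}\Vert_p\le Cm^{1/2}$, and Lemma~\ref{lem:SignVsChar} then produces $E\subseteq A_0$ with $\Vert\Ind_{\delta,E}\Vert_p\ge c\,\usdf(m)$; the pair $(E,A_0)$ with signs $\delta$ witnesses the bound. Likewise the democracy bound in \ref{los:2} does not ``follow identically with all signs $+1$'': averaging cannot produce an unsigned sum of small norm, and the paper has to invoke Kadec--Pe\l czy\'nski to extract a subsequence equivalent to the $\ell_2$ basis in order to get a set $B$ with $\Vert\Ind_B\Vert_p\le Cm^{1/2}$.

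Two further problems. In part \ref{los:1}, the duality transfer is unjustified: $\ucc_m$ is not defined by a linear operator and does not dualize the way $\unc_m$ does, so a lower bound for $\ucc_m[\XB,L_{p'}]$ says nothing about $\ucc_m[\XB,L_p]$ (and for $p=1$ you would need $p'=\infty$, which part \ref{los:2} excludes). No duality is needed: apply the same three-step argument directly in $L_p$, starting from $(A,\delta)$ with $\Vert\Ind_{\delta,A}\Vert_p\le 2\lsdf(m)$ and using $\Ave_{\varepsilon}\Vert\Ind_{\varepsilon,A}\Vert_p\ge c\,m^{1/2}$ plus Lemma~\ref{lem:SignVsChar}; the gliding-hump claim needs the same lemma to pass from a signed sum of norm $\ge c|A|^{1/2}$ to an unsigned sum over a subset. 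In part \ref{los:3}, your chain $\sum_{n\in B}|x_n(\omega)|\ge\bigl(\sum_{n\in B}|x_n(\omega)|^2\bigr)^{1/2}$ followed by Parseval only yields $\bigl\Vert\sum_{n\in B}|x_n|\bigr\Vert_\infty\ge c\,m^{1/2}$, not $c\,m$; the correct route is $\bigl\Vert\sum_{n\in B}|x_n|\bigr\Vert_\infty\ge\mu(\Omega)^{-1}\sum_{n\in B}\Vert x_n\Vert_1$ together with $1=\Vert x_n\Vert_2^2\le\Vert x_n\Vert_1\Vert x_n\Vert_\infty$, so that $\inf_n\Vert x_n\Vert_1>0$.
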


\begin{proof}
If $1\le p< \infty$, by Kahane-Khintchine inequalities and \cite{AACV2019}*{Proposition 2.4}, there is a constant $C_1$ such that
\[
\frac{1}{C_1} |A|^{1/2} \le \Ave_{\varepsilon\in\EE^A} \Vert \Ind_{\varepsilon,A} \Vert_{p} \le C_1 |A|^{1/2},\quad A\subseteq\NN.
\]
Combining these inequalities with Lemma~\ref{lem:SignVsChar} yields \ref{los:1} and the estimate for $\ucc_m$ in \ref{los:2}. In the case when $2< p<\infty$, by \cite{KadecPel1962}*{Corollary 7}, $\XB$ has a subsequence equivalent to the unit vector system of $\ell_2$. We infer that the estimate for $\dem_m$ in \ref{los:2} holds. If $p=\infty$, since $c=\inf_n \Vert \xx_n\Vert_1>0$ (see, e.g., \cite{AACV2019}*{Lemma 2.7}),
\[
\int_\Omega \left(\sum_{n\in A} |x_n|\right)\, d\mu \ge c |A|,
\]
whence
$
\sup_{\varepsilon\in\EE^A} \Vert \Ind_{\varepsilon,A}\Vert_\infty\ge c |A|,
$
for all $A\subseteq \NN$ finite. We infer that \ref{los:3} holds.
\end{proof}

\subsection{The trigonometric system over $\TT^d$} Temlyakov \cite{Temlyakov1998b}*{Theorem 2.1 and Remark 2} established the growth of the Lebesgue constants of the trigonometric system in $L_p$. Later on, Wojtaszczyk \cite{Woj2000}*{Corollary~(a)} and Blasco et al.\ \cite{BBGHO2018}*{Proposition 8.6} revisited this result. Our discussion uses Theorem~\ref{thm:main} and, more specifically, the estimates obtained in Sections~\ref{sect:OSLp}. Note that Lemmas~\ref{lem:ububos} and \ref{lem:lbos} apply, in particular, to the trigonometric system $\Tt^d$ over $\TT^d:=\RR^d/\ZZ^d$.

Using Shapiro's polynomials we obtain for each $m\in\NN$, a set $A_m$ of cardinality $m$ with $\Vert \Ind_{\varepsilon,A_m} \Vert_\infty \le 5 m^{1/2}$ for a suitable and $\varepsilon\in\EE^{A_m}$ (see \cite{Rudin1959}). Hence, there is a constant $C_1$ such that
\[
\ucc_m[\Tt^d,L_\infty(\TT^d)] \ge \frac{1}{C_1} |m|^{1/2}, \quad m\in\NN.
\]
In the case when $1<p<\infty$, Dirichlet's kernel (see e.g.\ \cite{Grafakos2004}) shows the existence of a constant $C_2$ such that, for the same sets $A_m$,
\[
m^{1-1/p}/C_2\le \Vert \Ind_{A_m}\Vert_p\le C_2 m^{1-1/p}.\]
Therefore, for each $1<p< \infty$ there is a constant $C_3$ such that
\[
\min\{ \dem_m[\Tt^d,L_p(\TT^d)] , \ucc_m[\Tt^d,L_p(\TT^d)]\} \ge \frac{1}{C_3} \Phi_p(m), \quad m\in\NN.
\]

In the case when $p=1$, we have
\[
\Vert \Ind_{A_m}\Vert_1\le C_4 \log m\] for all $m\ge 2$ and a suitable constant $C_4$. Thus, the same argument gives
\[
C_5 \min\{ \dem_m[\Tt^d,L_1(\TT^d)], \ucc_m[\Tt^d,L_1(\TT^d)]\} \ge \Psi(m):= \frac{m^{1/2}}{\log m}, \quad m\ge 2,
\]
for a suitable constant $C_5$. This estimate is optimal. Indeed, applying induction on $d$ and using Fubini's theorem, we infer from \cite{MPS1981}*{Theorem 2.1} that there is a constant $C_6$ such that
\[
\Vert \Ind_{\varepsilon,A} [\Tt^d,L_1(\TT^d)]\Vert_1 \ge\frac {1}{C_6} \log(|A|), \quad A\subseteq\NN,\; \varepsilon\in\EE^A.
\]
Therefore, using the aforementioned bounded linear map from $\ell_2$ into $L_1(\TT^d)$,
\[
\sdem_m[\Tt^d,L_1(\TT^d)]\le C_6 \Psi(m), \quad m\ge 2.
\]
To obtain sharp estimates for the squeeze symmetry parameters and the unconditionality parameters in the case $p=1$, we invoke the De Vall\'ee-Pousin's kernel, which yields, for each $m\in\NN$ and $s>1$, a function $v_{m,s}$ with $\Vert v_{m,s} \Vert_1 \le 1+s$ and
\[
\chi_{A_m} \le \Fou(v_{m,s}) \le 1,
\]
(see, e.g., \cite{Mehta2015}). Since there is a constant $C_7$ such that for each $m\in\NN$ there is $\varepsilon\in\EE^{A_m}$ with $m^{1/2} \le C_7 \Vert \Ind_{\varepsilon, A_m}\Vert_1$, applying Lemma~\ref{lem:SignVsChar} we obtain
\[
C_7\min\{ \sqd_m [\Tt^d,L_1(\TT^d)] , \FieldC \qglc_m [\Tt^d,L_1(\TT^d)] \} \ge m^{1/2}, \quad m\in\NN.
\]
Summing up, the democracy, super-democracy, SLC, disjoint squeeze symmetry, squeeze symmetry, almost greediness, Lebesgue, unconditionality, quasi-greediness, truncation quasi-greedy, QGLC and UCC parameters of the trigonometric system in $L_p(\TT^d)$ grow as $(\Phi_p(m))_{m=1}^\infty$ for all $1\le p \le \infty$ with the follwing exceptions in the case $p=1$: the democracy, super-democracy and the UCC parameters of the trigonometric system in $L_1(\TT^d)$ grow as $(\Psi(m))_{m=2}^\infty$; and in the case $p=\infty$, $\Tt^d$ is democratic in $L_\infty(\TT^d)$.

\subsection{The trigonemetric system in Hardy spaces} Fix $0<p<1$.
If for $n\in\NN\cup\{0\}$ we set
\[
\tau_n(\theta)=e^{2\pi i \theta}, \quad -1/2\le\theta\le 1/2,
\]
the sequence $\Tt=(\tau_n)_{n=0}^\infty$ is a basis of $H_p(\TT)$ whose biorthogonal functionals are the members of the sequence $(\overline{\tau}_n)_{n=0}^\infty$ under the natural dual mapping. Since $H_2(\TT)\subseteq H_p(\TT)$, the unit vector system of $\ell_2$ dominates $\Tt$ regarded as basis of $H_p(\TT)$. In turn, since the dual basis is uniformly bounded (see \cite{DRS1969}), $\Tt$ dominates the unit vector system of $c_0$. We infer from Lemma~\ref{lem:domunclocal} that there is a constant $C$ such that
\[
\max\{\unc_m[\Tt,H_p(\TT)], \sq_m[\Tt,H_p(\TT)]\} \le C m^{1/2}, \quad m\in\NN.
\]
This estimates are optimal. Indeed, the Dirichlet kernel $\sum_{k=0}^{n-1}\tau_k$, $n\in\NN$, is uniformly bounded in $H_p(\TT)$, and Khintchine's inequalities yield a constant $C_1$ such that
\[
\frac{1}{C_1} |A|^{p/2} \le \Ave_{\varepsilon\in\EE^A} \Vert \Ind_{\varepsilon,A} \Vert^p_{H_p} \le C_1 |A|^{p/2},\quad A\subseteq\NN.
\]
Therefore, for some constant $C_2$,
\[
\min\{\dem_m[\Tt,H_p(\TT)], \ucc_m[\Tt,H_p(\TT)]\} \ge\frac{1}{C_2} m^{1/2}, \quad m\in\NN.
\]

\subsection{Jacobi polynomials}
Given scalars $\alpha$, $\beta>-1$, the Jacobi polynomials
\[
\JB(\alpha,\beta)=(p_n^{(\alpha,\beta)})_{n=0}^\infty
\]
appear as the orthonormal polynomials associated with the measure $\mu_{\alpha,\beta}$ given by
\begin{equation*}
d\mu_{\alpha,\beta}(x)=(1-x)^\alpha (1+x)^\beta \chi_{(-1,1)} (x) \, dx.
\end{equation*}
In the case when $\gamma_0:=\min\{\alpha,\beta\}> -1/2$ we set $\gamma=\max\{\alpha,\beta\}$ and
\[
\underline{p}=\underline{p}(\alpha,\beta)=\frac{4(\gamma+1)}{2\gamma+3}, \quad
\overline{p}=\overline{p}(\alpha,\beta)=\frac{4(\gamma+1)}{2 \gamma +1}.
\]
Notice that $\underline{p}$ and $\overline{p}$ are conjugate exponents. Given $p\in (\underline{p},\overline{p})$ we define $q(p,\alpha,\beta)\in(1,\infty)$ by
\[
\frac{1}{q(p,\alpha,\beta)}=\lambda,
\]
where $\lambda\in[0,1]$ is such that
\[
p=(1-\lambda)\underline{p}(\alpha,\beta)+\lambda \overline{p}(\alpha,\beta).
\]
A routine computation yields
\[
\frac{1}{q(p,\alpha,\beta)}=\frac{2\gamma+3}{2}-\frac{2(\gamma+1)}{p}, \quad
\underline{p}(\alpha,\beta) <p< \overline{p}(\alpha,\beta).
\]
We also define $r=r(p,\alpha,\beta)$ by
\[
\frac{1}{r(p,\alpha,\beta)}=\frac{2\gamma_0+3}{2}-\frac{2(\gamma_0+1)}{p}, \quad
\underline{p}(\alpha,\beta) <p< \overline{p}(\alpha,\beta).
\]

\begin{theorem}\label{UPJacobi}
Let $\alpha$ and $\beta$ be such that $\min\{\alpha,\beta\}> -1/2$. Given $p\in (\underline{p}(\alpha,\beta),\overline{p}(\alpha,\beta))$, set $q=q(p,\alpha,\beta)$. In the case when $p\le 2$, the unit vector system of $\ell_q$ dominates $\JB(\alpha,\beta)$ regarded as sequence in $L_p(\mu_{\alpha,\beta})$ and, in the case when $p\ge 2$, $\JB(\alpha,\beta)$, regarded as sequence in $L_p(\mu_{\alpha,\beta})$, dominates the unit vector system of $\ell_q$.
\end{theorem}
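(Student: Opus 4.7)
The two assertions are adjoint to each other, and I would begin by reducing the theorem to a single statement using duality. Since $\JB(\alpha,\beta)$ is orthonormal in $L^2(\mu_{\alpha,\beta})$, each polynomial $p_n^{(\alpha,\beta)}$ coincides with its own biorthogonal functional under the pairing $(f,g)\mapsto\int fg\,d\mu_{\alpha,\beta}$. Consequently the dual basis of $\JB(\alpha,\beta)$ viewed in $L^p(\mu_{\alpha,\beta})$ is $\JB(\alpha,\beta)$ viewed in $L^{p'}(\mu_{\alpha,\beta})$. A short computation with $1/q(p,\alpha,\beta)=(2\gamma+3)/2-2(\gamma+1)/p$ gives $q(p,\alpha,\beta)'=q(p',\alpha,\beta)$. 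Hence the synthesis bound $\ell_q\to L^p$ at some $p\in(\underline{p},2]$ is the adjoint of the analysis bound $L^{p'}\to\ell_{q'}$ at $p'\in[2,\overline{p})$, and it suffices to prove one of the two.

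I would focus on the analysis direction: for every $p\in[2,\overline{p})$, I want to show that the coefficient map $f\mapsto(\int fp_n^{(\alpha,\beta)}\,d\mu_{\alpha,\beta})_{n=0}^\infty$ is bounded from $L^p(\mu_{\alpha,\beta})$ into $\ell^{q(p,\alpha,\beta)}$. At $p=2$ the bound is just Parseval's identity, giving $q=2$ with operator norm $1$. For the remaining $p$, the plan is to interpolate Parseval with an endpoint estimate at $p=\overline{p}$, where the formula yields the target exponent $q=1$. Since $1/p\mapsto 1/q(p,\alpha,\beta)$ is affine and joins $(1/2,1/2)$ to $(1/\overline{p},1)$, the Riesz--Thorin theorem (or Stein's complex interpolation, if the endpoint is only of weak type) produces precisely the sharp exponent $q(p,\alpha,\beta)$ at every intermediate $p$.

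The main obstacle is the endpoint estimate at $p=\overline{p}$, a Hausdorff--Young-type inequality for Jacobi polynomial expansions. This is a classical result from harmonic analysis, going back to Pollard and Askey--Wainger and sharpened by Muckenhoupt; its proof rests on a Calder\'on--Zygmund decomposition of the Christoffel--Darboux kernel, exploiting the fact that near the endpoints of $[-1,1]$ the Jacobi polynomials admit an asymptotic expansion in terms of Bessel functions of orders $\alpha$ and $\beta$. The critical value $\overline{p}=4(\gamma+1)/(2\gamma+1)$ is precisely the threshold at which the endpoint contributions fail to be uniformly integrable, and the standing assumption $\min(\alpha,\beta)>-1/2$ is exactly what allows Pollard's decomposition to yield the required (restricted) weak-type bound. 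Once the endpoint estimate is in place, complex interpolation with the Parseval isometry yields the desired bound for every $p\in[2,\overline{p})$, and dualizing finishes the proof in the range $p\in(\underline{p},2]$.
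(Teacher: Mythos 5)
Your reduction by duality (the identity $q(p,\alpha,\beta)'=q(p',\alpha,\beta)$ and the fact that the two halves of the theorem are adjoint to one another) is correct and is also how the paper proceeds, but you have attached the wrong direction to each half, and this makes the statement you set out to prove false. Untangling the paper's use of ``dominates'', the true content of the theorem is: the \emph{analysis} (coefficient) map $f\mapsto\bigl(\int f\,p_n^{(\alpha,\beta)}\,d\mu_{\alpha,\beta}\bigr)_n$ is bounded from $L_p(\mu_{\alpha,\beta})$ into $\ell_{q(p)}$ for $\underline{p}<p\le 2$ (where $q(p)\ge 2$), and the \emph{synthesis} map $\ell_{q(p)}\to L_p(\mu_{\alpha,\beta})$ is bounded for $2\le p<\overline{p}$ (where $q(p)\le 2$). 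You propose instead to prove the analysis bound $L_p\to\ell_{q(p)}$ for $p\in[2,\overline{p})$ with $q(p)\le 2$, culminating in an endpoint ``Hausdorff--Young-type'' estimate $L_{\overline{p}}\to\ell_1$ (or its weak form). That inequality, and the whole family for $2<p<\overline{p}$, is false: by the Kahane--Khintchine estimate used in Lemma~\ref{lem:lbos}, $\Ave_{\varepsilon}\Vert\sum_{n\le N}\varepsilon_n p_n^{(\alpha,\beta)}\Vert_{L_p(\mu_{\alpha,\beta})}\approx N^{1/2}$ for every finite $p$, so one can choose signs with $\Vert\sum_{n\le N}\varepsilon_n p_n^{(\alpha,\beta)}\Vert_{L_p}\lesssim N^{1/2}$, while the coefficient sequence has $\ell_q$-norm $N^{1/q}\gg N^{1/2}$ whenever $q<2$ (and $\ell_{1,\infty}$-norm $N$ at your proposed endpoint). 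No interpolation scheme, complex or restricted-weak-type, can produce inequalities that fail; the classical results of Pollard, Askey--Wainger and Muckenhoupt you invoke go in the opposite direction.

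The correct endpoint sits at the other corner of the interpolation diagram, and it is the one the paper uses: $\sup_n\Vert p_n^{(\alpha,\beta)}\Vert_{L_{\overline{p},\infty}(\mu_{\alpha,\beta})}<\infty$. Since $L_{\overline{p},\infty}$ is normable, this immediately gives the synthesis bound $\ell_1\to L_{\overline{p},\infty}$ by the triangle inequality; Marcinkiewicz (real) interpolation of the synthesis operator against the Parseval bound $\ell_2\to L_2$ then yields $\ell_{q(p)}\to L_p$ for all $2\le p<\overline{p}$ (the off-diagonal case $q<p$ is the favourable one), and dualizing — exactly as you suggest, but in the reverse direction — gives the analysis bounds $L_p\to\ell_{q(p)}$ for $\underline{p}<p\le 2$. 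So the repair is to keep your duality step but swap which half is ``analysis'' and which is ``synthesis'', replace your endpoint by the uniform weak-$L_{\overline{p}}$ bound on the individual polynomials (which does follow from the classical endpoint estimates, or from the uniform boundedness of the partial-sum operators from $L_{\overline{p},1}$ to $L_{\overline{p},\infty}$), and interpolate the synthesis operator rather than the coefficient operator.
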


\begin{proof}
This result could be derived from \cite{Veprintsev2016}. Here, we present an alternative proof. Using Marcinkiewicz's interpolation theorem (see, e.g., \cite{Grafakos2004}) and duality, it suffices to prove that the unit vector system of $\ell_1$ dominates $(p_n^{(\alpha,\beta)})_{n=0}^\infty$ regarded as a sequence in $\XX:=L_{\overline{p},\infty} (\mu_{\alpha,\beta})$. Since $\XX$ is locally convex, we must prove that $\sup_n \Vert p_n^{(\alpha,\beta)}\Vert_{\overline{p},\infty} <\infty$. This can be deduced from classical estimates for Jacobi polynomials (see \cite{AACV2019}*{Theorem 3.2 and Lemma 3.3}) or from the fact that the partial sums of Jacobi-Fourier series $(J_n)_{n=1}^\infty$ are uniformly bounded when regarded as operators from $L_{\overline{p},1}(\mu_{\alpha,\beta})$ into $L_{\overline{p},\infty}(\mu_{\alpha,\beta})$ (see \cite{GPV1990}). Indeed, taking into account that the dual of $L_{\overline{p},1}(\mu_{\alpha,\beta})$ is $L_{\underline{p},\infty}(\mu_{\alpha,\beta})$ under the natural dual pairing, the uniform boundedness of the operators $(J_n-J_{n-1})_{n=0}^\infty$ yields
\[
\sup_{n\in\NN} \Vert p_n^{(\alpha,\beta)}\Vert_{\underline{p},\infty}
\Vert p_n^{(\alpha,\beta)}\Vert_{\overline{p},\infty}<\infty.
\]
Since $L_{\overline{p},\infty}(\mu_{\alpha,\beta})\subseteq L_1(\mu_{\alpha,\beta})$ and $\inf_n \Vert p_n^{(\alpha,\beta)}\Vert_1>0$ (see, e.g., \cite{AACV2019}*{Equation (3.4)}), we are done.
\end{proof}

\begin{theorem}\label{thm:LBJacobi}
Let $\alpha$ and $\beta$ be such that $\min\{\alpha,\beta\}> -1/2$. Given $p\in [2,\overline{p}(\alpha,\beta))$, set $q=q(p,\alpha,\beta)$ and $r=r(p,\alpha,\beta)$. Then:
\begin{enumerate}[label=(\roman*), leftmargin=*, widest=ii]
\item If $p\ge 2$, there is a constant $C_1$ such that
\[
\udf[\JB(\alpha,\beta),L_p(\mu_{\alpha,\beta})](m)\ge \frac{1}{C_1} m^{1/q}, \quad m\in\NN,
\]
\item If $p\le 2$, there is a constant $C_2$ such that
\[
\lsdf[\JB(\alpha,\beta),L_p(\mu_{\alpha,\beta})](m)\le C_2 m^{1/r}, \quad m\in\NN.
\]
\end{enumerate}
\end{theorem}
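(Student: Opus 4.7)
The plan is to prove both parts by constructing explicit finite sets $A_m\subseteq\NN$ with $|A_m|=m$ and estimating $\Vert\Ind_{A_m}\Vert_p$ through the classical pointwise behavior of Jacobi polynomials near the endpoints $\pm 1$. Setting $1/q_\delta:=(2\delta+3)/2-2(\delta+1)/p$ for $\delta\in\{\alpha,\beta\}$, a direct calculation gives $1/q_\alpha-1/q_\beta=(\alpha-\beta)(1-2/p)$, so that $1/q=\max\{1/q_\alpha,1/q_\beta\}$ when $p\ge 2$ while $1/r=\max\{1/q_\alpha,1/q_\beta\}$ when $p\le 2$. The pointwise inputs we will use are the classical asymptotics $p_n^{(\alpha,\beta)}(1)\asymp n^{\alpha+1/2}$ and $p_n^{(\alpha,\beta)}(-1)\asymp(-1)^{n}n^{\beta+1/2}$, together with the fact that $p_n^{(\alpha,\beta)}(x)\asymp p_n^{(\alpha,\beta)}(1)$ on the boundary layer $\{x:|x-1|\lesssim n^{-2}\}$ and symmetrically at $-1$.

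To prove (i), for $p\ge 2$ I would take $A_m=\{0,2,4,\dots,2(m-1)\}$, chosen so that the values $p_{2k}^{(\alpha,\beta)}(\pm 1)$ are all positive and no cancellation occurs at either endpoint. On the neighborhoods of $\pm 1$ of radius $\sim m^{-2}$ the sum $f_m:=\sum_{k=0}^{m-1}p_{2k}^{(\alpha,\beta)}$ has magnitude $\asymp m^{\alpha+3/2}$ (near $+1$) and $\asymp m^{\beta+3/2}$ (near $-1$). Restricting the integral $\int|f_m|^{p}\,d\mu_{\alpha,\beta}$ to these two boundary layers yields
\[
\Vert f_m\Vert_p^{p}\gtrsim m^{p(\alpha+3/2)-2(\alpha+1)}+m^{p(\beta+3/2)-2(\beta+1)}=m^{p/q_\alpha}+m^{p/q_\beta},
\]
so $\udf(m)\ge\Vert f_m\Vert_p\gtrsim\max\{m^{1/q_\alpha},m^{1/q_\beta}\}=m^{1/q}$.

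To prove (ii), for $p\le 2$ I would take $A_m=\{0,1,\dots,m-1\}$ and constant signs $\varepsilon\equiv 1$, and bound $\Vert g_m\Vert_p$ from above for $g_m:=\sum_{k=0}^{m-1}p_k^{(\alpha,\beta)}$. On the boundary layer near $+1$ the coefficients align positively, so $|g_m|\asymp m^{\alpha+3/2}$ there and the contribution to the $L_p$-norm is $\asymp m^{1/q_\alpha}$. On the boundary layer near $-1$ the alternating factors $(-1)^k$ force cancellation; by Abel summation $|g_m|\lesssim m^{\beta+1/2}$, so this contribution is only $\lesssim m^{1/q_\beta-1}$, smaller by a factor $m^{-1}$. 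A matching estimate on the bulk region $|x|\le 1-cm^{-2}$, based on Darboux's uniform asymptotic for $p_n^{(\alpha,\beta)}(\cos\theta)$ and an Abel-summation argument, will show it does not dominate either. Altogether $\Vert g_m\Vert_p\lesssim m^{1/q_\alpha}$, and since $1/q_\alpha\in\{1/q,1/r\}$ and $1/q\le 1/r$ for $p\le 2$, this gives $\lsdf(m)\le\Vert g_m\Vert_p\lesssim m^{1/r}$.

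The main obstacle is the bulk estimate. One must combine the Hilb-Darboux uniform asymptotic for $p_n^{(\alpha,\beta)}(\cos\theta)$ on the intermediate region $m^{-1}\lesssim\theta\lesssim\pi-m^{-1}$ with a careful Abel-type summation of the resulting oscillatory terms to show that the bulk contribution to $\Vert g_m\Vert_p$ does not exceed the boundary-layer contribution. These technical inputs, and the related $L_p$-boundedness of partial-sum projections for $p\in(\underline{p},\overline{p})$, are standard tools in the analysis of Jacobi expansions and are used in the forms we need in the companion paper \cite{AACV2019}.
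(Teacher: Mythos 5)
Your construction is sound and the exponent bookkeeping is right: the identity $1/q_\alpha-1/q_\beta=(\alpha-\beta)(1-2/p)$ correctly identifies $1/q$ (resp.\ $1/r$) as $\max\{1/q_\alpha,1/q_\beta\}$ for $p\ge 2$ (resp.\ $p\le 2$), and restricting the integral to endpoint layers of width $\asymp m^{-2}$, whose $\mu_{\alpha,\beta}$-measures are $\asymp m^{-2(\alpha+1)}$ and $\asymp m^{-2(\beta+1)}$, converts the pointwise sizes $m^{\alpha+3/2}$, $m^{\beta+3/2}$ into exactly $m^{1/q_\alpha}$, $m^{1/q_\beta}$. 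The route differs from the paper's mainly in packaging: the printed proof is a one-line deduction from \cite{AACV2019}*{Proposition 3.8} (the $L_p$ estimates for Jacobi partial sums), Lemma~\ref{lem:SignVsChar}, and Pollard's theorem that $\JB(\alpha,\beta)$ is a Schauder basis of $L_p(\mu_{\alpha,\beta})$, whereas you build the test functions by hand; your even-index choice in (i) is a nice device that captures both endpoints at once, avoiding the reflection $x\mapsto -x$ (or Lemma~\ref{lem:SignVsChar}) that one would otherwise use to reduce to $\alpha=\max\{\alpha,\beta\}$. Two remarks on (ii). First, the Abel-summation step at $-1$ is unnecessary: even the crude bound $|g_m|\lesssim m^{\beta+3/2}$ on that layer contributes only $m^{1/q_\beta}\le m^{1/r}$ when $p\le 2$, so you can drop that argument (which is in any case delicate, since $|p_k(x)|$ need not be monotone in $k$). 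Second, the bulk estimate is genuinely the crux: without exploiting the oscillation in $n$ of the Hilb--Darboux main term, the region $\theta\gtrsim m^{-1}$ contributes $\asymp m$, which is far too large; with it one gets $|g_m(\cos\theta)|\lesssim \theta^{-\alpha-1/2}\min\{m,1/\theta\}$ plus summable errors, and the resulting integral closes precisely because $p>\underline{p}(\alpha,\beta)$. You defer this to the asymptotics developed in \cite{AACV2019}, which is the very source the paper's proof rests on, so the proposal is at the same level of rigor as the printed proof --- just be aware that this deferred step carries essentially all of the analytic content of part (ii).
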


\begin{proof}
It follows by combining \cite{AACV2019}*{Proposition 3.8}, Lemma~\ref{lem:SignVsChar}, and the fact that $\JB(\alpha,\beta)$ is a Schauder basis of $L_p(\mu_{\alpha,\beta})$ (see \cite{Pollard1949}).
\end{proof}

\begin{theorem}
Let $\alpha$ and $\beta$ be such that $\min\{\alpha,\beta\}> -1/2$. Set $\gamma_0=\min\{\alpha,\beta\}$ and $\gamma=\max\{\alpha,\beta\}$.
\begin{enumerate}[label=(\roman*), leftmargin=*, widest=ii]
\item If $p\in [2,\overline{p}(\alpha,\beta))$, the democracy, super-democracy, SLC, disjoint squeeze symmetry, squeeze symmetry, almost greediness, Lebesgue, unconditionality, quasi-greediness, truncation quasi-greedy, QGLC and UCC parameters of $\JB(\alpha,\beta)$ regarded as a basis of $L_p(\mu_{\alpha,\beta})$ grow as
\[
\Phi(m)=m^{(1+\gamma)|1-2/p|}, \quad m\in\NN.
\]
\item If $p\in (\underline{p},2]$, the Lebesgue constants and the unconditionality parameters of $\JB(\alpha,\beta)$ regarded as a basis of $L_p(\mu_{\alpha,\beta})$ grow as the sequence $(\Phi(m))_{m=1}^\infty$;
the almost greedy parameters, the squeeze symmetry, and the disjoint squeeze symmetry parameters grow at least as
\[
\frac{\Phi(m)}{\log m}, \quad m\ge 2;
\]
and the super-democracy, SLC, quasi-greediness, truncation quasi-greedy, QGLC and UCC parameters grow as least as
\[
m^{(1+\gamma_0)|1-2/p|}, \quad m\in\NN.
\]
\end{enumerate}
\end{theorem}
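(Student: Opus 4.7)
The plan is to combine the specific asymptotic information about Jacobi polynomials given by Theorems~\ref{UPJacobi} and~\ref{thm:LBJacobi} with the abstract Lebesgue-constant machinery developed in Sections~\ref{sect:Main}--\ref{sect:DLP}. For the upper bound, valid throughout the whole range $(\underline{p},\overline{p})$, I would squeeze $\JB(\alpha,\beta)$ between the unit vector bases of two $\ell_r$-spaces: on one side using Theorem~\ref{UPJacobi}, and on the other using Bessel's inequality together with the inclusions $L_p(\mu_{\alpha,\beta})\hookrightarrow L_2(\mu_{\alpha,\beta})$ for $p\ge 2$ or $L_2(\mu_{\alpha,\beta})\hookrightarrow L_p(\mu_{\alpha,\beta})$ for $p\le 2$, both afforded by the finiteness of $\mu_{\alpha,\beta}$. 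Feeding these two dominations into Lemma~\ref{lem:domunclocal} and evaluating the intermediate $\dom_m$ by~\eqref{eq:domlplq} yields
\[
\sq_m[\JB,L_p(\mu_{\alpha,\beta})]+\unc_m[\JB,L_p(\mu_{\alpha,\beta})]\lesssim \Phi(m).
\]
Theorem~\ref{thm:main} then gives $\leb_m\lesssim \Phi(m)$, and the chains~\eqref{eq:DLChain},~\eqref{eq:UTChain} and~\eqref{eq:DemSD} propagate this bound to every parameter appearing in the statement.

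For the matching lower bound in part~(i), I would feed $\udf(m)\gtrsim m^{1/q}$ from Theorem~\ref{thm:LBJacobi}(i) into Lemma~\ref{lem:lbos}(ii); this produces $\dem_m\gtrsim \Phi(m)$ and, using $\usdf\ge \udf$, also $\ucc_m\gtrsim \Phi(m)$. The same chain estimates then force every listed parameter to be of exact order $\Phi(m)$.

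For the range $p\in(\underline{p},2]$ in part~(ii), the weaker $(1+\gamma_0)$-type lower bound on $\sdem_m$, $\slc_m$, $\qg_m$, $\rto_m$, $\qglc_m$ and $\ucc_m$ follows from Theorem~\ref{thm:LBJacobi}(ii) and Lemma~\ref{lem:lbos}(i), which give $\ucc_m\gtrsim m^{1/2-1/r}=m^{(1+\gamma_0)|1-2/p|}$, propagated via~\eqref{eq:UTChain} and~\eqref{eq:DemSD}. The sharp $\Phi(m)$ lower bound on $\leb_m$ and $\unc_m$ instead comes from Banach-space duality: $\JB$ is self-dual under the natural $L_p$--$L_{p'}$ pairing, and unconditionality parameters are invariant under duality of Banach bases, so $\unc_m[\JB,L_p]=\unc_m[\JB,L_{p'}]\gtrsim \Phi(m)$ by part~(i) applied to $p'\in[2,\overline{p})$; then $\leb_m\ge \uncc_m\gtrsim \Phi(m)$ via~\eqref{U:G}. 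For the $\Phi(m)/\log m$ bound on $\sq_m$, I would invoke Theorem~\ref{eq:dualizeSS}: the fundamental function of $\JB$ is a polynomial power in $m$ (which can be read off bidemocracy together with Theorem~\ref{thm:LBJacobi}), so its primitive weight has URP/LRP and $H_m[\prim]\sim\log m$, yielding $\sq_m\gtrsim \unc_m/\log m\gtrsim \Phi(m)/\log m$. The analogous bounds for $\sqd_m$ and $\leba_m$ would then be extracted by combining the quasi-greedy and almost-greedy duality estimates (Propositions~\ref{prop:QGDual} and~\ref{cor:AGDual}) applied at $L_{p'}$ with the sharp part~(i) bounds and Theorem~\ref{thm:MainNotMain}.

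The hard step will be this last one, the $\Phi(m)/\log m$ lower bounds for $\sqd_m$ and $\leba_m$ when $\alpha\ne\beta$. Lemma~\ref{lem:SLCSSD}'s relation $\sqd_m\le \sq_m\le (\sqd_m)^2$ only transfers the $\sq_m$ bound into $\sqd_m\gtrsim \sqrt{\sq_m/C}$, which is strictly weaker than the claim; so one must exploit the sharp $\Phi(m)$-estimates already in hand at $L_{p'}$ via part~(i) to eliminate two of the three terms on the right-hand side of Propositions~\ref{prop:QGDual} and~\ref{cor:AGDual}, forcing the surviving term evaluated at $L_p$ to absorb precisely one logarithmic factor.
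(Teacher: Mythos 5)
Most of your proposal is sound and matches the paper's route: the squeeze of $\JB(\alpha,\beta)$ between two $\ell_r$-bases fed into Lemma~\ref{lem:domunclocal} (this is exactly the content of Lemma~\ref{lem:upos}), Theorem~\ref{thm:main} for the Lebesgue upper bound, Lemma~\ref{lem:lbos} for the lower bounds in both regimes, duality of the unconditionality parameters to transfer the sharp lower bound from $L_{p'}$ to $L_p$ (the paper leaves this implicit, but it is the only available route), and Theorem~\ref{eq:dualizeSS} with $H_m[\prim]\approx\log m$ to get $\sq_m\gtrsim\Phi(m)/\log m$. All of that is correct.

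The gap is precisely where you flagged it: the $\Phi(m)/\log m$ lower bounds for $\sqd_m$ and $\leba_m$. Your proposed mechanism via Propositions~\ref{prop:QGDual} and~\ref{cor:AGDual} cannot work, because those inequalities bound a dual-basis parameter \emph{from above} by a sum (or maximum) of primal parameters each of which is of order $\Phi(m)$ in this example; rearranging, say, $\qgc_m[\XB^*,\XX^*]\le \qg_m[\XB,\XX]+\sq_m[\XB,\XX]+\sq_m[\XB^*,\XX^*]$ to isolate $\sq_m[\XB^*,\XX^*]$ leaves a difference of two quantities of order $\Phi(m)$, which yields nothing (and in any case these propositions involve $\sq_m$, not $\sqd_m$). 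The ingredient you are missing is the \emph{upper gliding hump property for constant coefficients}: Lemma~\ref{lem:lbos}\ref{los:1} establishes it for any orthogonal system in $L_p(\mu)$ with $p\le 2$, Lemma~\ref{lem:UGH} then gives $\sq_m\le C\,\sqd_m$ with no square-root or logarithmic loss, so $\sqd_m\gtrsim \sq_m\gtrsim\Phi(m)/\log m$, and Theorem~\ref{thm:MainNotMain} gives $\leba_m\gtrsim\sqd_m$. Equivalently, Proposition~\ref{prop:UGHPCC} packages this as $\leb_m\le C\,\leba_m\log m$, whence $\leba_m\gtrsim\Phi(m)/\log m$ directly from the sharp lower bound on $\leb_m$. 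This is why the paper lists Proposition~\ref{prop:UGHPCC} among the ingredients of the proof.
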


\begin{proof}
Just combine Theorems~\ref{thm:LBJacobi} and \ref{UPJacobi}, Lemmas~\ref{lem:upos} and \ref{lem:lbos}, and Proposition~\ref{prop:UGHPCC}.
\end{proof}

\subsection{Lindenstrauss dual bases}\label{sect:LDB}
Let $\delta=(d_n)_{n=1}^\infty$ be a non-decreasing sequence in $\NN$ with $d_n\geq 2$ for all $n\in\NN$. Set
\begin{equation*}
\quad \sigma(k)=2+\sum_{j=1}^{k-1} d_j, \quad k\in\NN.
\end{equation*}
Let $\Gamma\colon \NN\to\NN\cup\{0\}$ be the left inverse of the function defined by $n\mapsto\sigma^{(n)}(1)$, $n\in\NN\cup\{0\}$. In \cite{AAW2020} it was constructed an almost greedy basis $\XB_\delta$ of a subspace $\XX_\delta$ of $\ell_1$ with
\[
\frac{1}{4}(1+\Gamma(m)) \le \unc_m[\XB_\delta,\XX_\delta] \le 2 (1+\Gamma(m)), \quad m\in\NN.
\]
In the case when $d_n=2$ for $n\in\NN$ the resulting space is the classical Lindesntrauss space, say $\XX$, built in \cite{Lin1964}. Moreover $\XX_\delta$ is isomporphic to $\XX$ regardless the choice of $\delta$. The dual space of $\XX_\delta$ is isomorphic to $\ell_\infty$, and the dual basis $\XB_\delta^*$ spans a space isomorphic to $c_0$. In \cite{AAW2020} it is also proved that for each increasing concave function $\phi\colon[0,\infty) \to [0,\infty)$ with $\phi(0)=0$, we can choose $\delta$ so that $\Gamma$ grows as $(\phi(\log (m)))_{m=2}^\infty$. By \cite{AAW2020}*{Proposition 4.4 and Lemma 7.3},
\[
\bid_m[\XB_\delta, \XX_\delta]\le 2 (1+\Gamma(m)), \quad m\in\NN.
\]
Thus, by \eqref{eq:GreedyDual} and Proposition~\ref{prop:SSBiDem}, there is a constant $C$ such that $\leb_m[\XB_\delta^*, \XX_\delta^*]\le C\Gamma(m)$ for all $m\ge 2$.

As far as lower bounds is concerned, combining \cite{AAW2020}*{Lemma 7.1 and Lemma 7.2} yields
\[
\dem_m[\XB_\delta^*, \XX_\delta^*]\ge \frac{1}{2} \Gamma(m), \quad m\in\NN;
\]
and the proof of \cite{AAW2020}*{Lemma 7.1} gives that
\[
\ucc_m[\XB_\delta^*, \XX_\delta^*]\ge\frac{1}{8} \Gamma(m), \quad m\in\NN.
\]

\subsection{Bases with large greedy-like parameters}
Let $\XB=(\xx_n)_{n=1}^\infty$ be a basis of a $p$-Banach space satisfying \eqref{eq:finiteparameters}. Then,
\[
\max\{ \unc_m[\XB,\XX], \sq_m[\XB,\XX]\} \le (C[\XB])^2 m^{1/p}, \quad m\in\NN.
\]
Consequently, there is a constant $C$ such that $\leb_m[\XB,\XX]\le C m^{1/p}$ for all $m\in\NN$. There are bases for which this estimate is optimal. Take, for instance, the difference basis $\DB=(\dd_n)_{n=1}^\infty$ in $\ell_p$ given by
\[
\dd_n=\ee_n-\ee_{n-1},\quad n\in \NN,
\]
where $(\ee_n)_{n=1}^\infty$ is the unit vector system and $\ee_0=0$.
For $0<p\le 1$, $\DB$ is a Schauder basis of $\ell_p$ whose dual basis is (naturally identified with) the summing basis $\SB=(\sss_n)_{n=1}^\infty$ of $c_0$ given by
\[
\sss_n=\sum_{k=1}^n \ee_k,\quad n\in \NN.
\]
Since $\Vert \sum_{n=1}^m \dd_n\Vert_p=1$ and $\Vert \sum_{n=1}^{m} \dd_{2n} \Vert_p=(2m)^{1/p}$ for all $m\in\NN$, we have
\[
\dem_m[\DB,\ell_p]\ge (2m)^{1/p},\quad m\in \NN,
\]
and
\[\ucc_{m}[\DB,\ell_p]\ge m^{1/p}, \quad m\in \NN.
\]

As for the dual basis, we have 
\[
\left\Vert \sum_{n=1}^m \sss_n\right\Vert_\infty=m,\;\text{and}\; \left\Vert \sum_{n=1}^m (-1)^n\sss_n\right\Vert_\infty=1,
\]
whence $\ucc_m[\SB,c_0]\ge m$, for all $m\in\NN$. Notice that the summing basis of $c_0$ is democratic.

Another classical basis with large greedy-like parameters is the $L_1$-normalized Haar system $\HB$. It is essentially known that
\[
\dem_{m}[\HB,L_1([0,1])]\ge m, \quad m\in\NN,
\]
and
\[\ucc_{2m}[\HB,L_1([0,1])\ge m/4,\quad m\in\NN,
\]
(see \cite{DKW2002}). Nonetheless, certain subbases of $\HB$ are quasi-greedy basic sequences in $L_1([0,1])$ \cites{DKW2002,Gogyan2016}.

\subsection{Tsirelson's space}
The space $\Ts^*$ constructed by Tsirelson \cite{Tsirelson1974} to prove the existence of a Banach space that contains no copy of $\ell_p$ or $c_0$ is the dual of the Tsirelson space $\Ts$ defined by Figiel and Johnson \cite{FigielJohnson1974}. The unit vector system $\BB=(\ee_n)_{n=1}^\infty$ is a greedy basis of $\Ts$ whose fundamental function is equivalent to the fundamental function of the unit vector system of $\ell_1$ (see \cite{DOSZ2011}). Although $\Ts$ contains no copy of $\ell_1$, its unit vector system contains finite subbases uniformly equivalent to the unit vector system of $\ell_1^n$ for all $n\in\NN$ (see \cite{CasShu1989}*{Proposition I.2}). Thus, the unit vector system of the original Tsirelson space $\Ts^*$ contains finite subbases uniformly equivalent to the unit vector system of $\ell_\infty^n$ for all $n\in\NN$. In particular, $\ldf[\BB,\Ts^*]$ is bounded. With an eye to studying the TGA with respect to the canonical basis of the original Tsirelson space $\Ts^*$ we give a general lemma.

\begin{lemma}\label{lem:closel1}
Let $\XB$ be a basis of a quasi-Banach space $\XX$. Suppose that $\ldf[\XB^*,\XX^*]$ is bounded and that $\XB^{**}$ is equivalent to $\XB$. Then
\begin{align*}
\bid_m[\XB,\XX]
&\approx\leba_m[\XB^*,\XX^*]
\approx \sq_m[\XB^*,\XX^*]
\approx \sqd_m[\XB^*,\XX^*]
\approx\usdf[\XB^*,\XX^*]\\
&\approx \dem_m[\XB^*,\XX^*]
\approx\dom_m[\XB^*,c_0]
\approx \dom_m[\ell_1,\XB].
\end{align*}
\end{lemma}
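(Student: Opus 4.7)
The hypothesis $\XB^{**}\sim\XB$ forces the canonical embedding $j\colon\XX\to\XX^{**}$ to be an isomorphism from $\XX$ onto the closed subspace $\overline{\spn}(\xx_n^{**})\subseteq\XX^{**}$; since $\XX^{**}$ is always Banach, this endows $\XX$ with an equivalent Banach norm, and I will assume throughout that $\XX$ itself is a Banach space. The plan is then to show that all eight quantities are equivalent, as functions of $m$, to the single pivot $\usdf[\XB^*,\XX^*](m)$. Two of the equivalences are essentially by definition: writing $\dom_m[\ell_1,\XB]=\sup_{|A|\le m,\,\|f\|\le 1}\sum_{n\in A}|\xx_n^*(f)|$ and choosing $\varepsilon_n=\sgn(\xx_n^*(f))$ identifies the right-hand side with $\sup\bigl\{\Ind_{\varepsilon,A}[\XB^*,\XX^*](f):|A|\le m,\,\varepsilon\in\EE^A,\,\|f\|\le 1\bigr\}=\usdf[\XB^*,\XX^*](m)$, while \eqref{eq:usdfdoml00} applied in the Banach space $\XX^*$ gives $\dom_m[\XB^*,c_0]\approx\usdf[\XB^*,\XX^*](m)$.

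\textbf{Fundamental function of $\XB$ and democracy of $\XB^*$.} I next show $\usdf[\XB,\XX](m)\approx m$. The upper bound is the Banach triangle inequality. For the lower bound, the boundedness of $\ldf[\XB^*,\XX^*]$ yields, for every $m$, a set $A_m$ with $|A_m|=m$ and $\|\Ind_{A_m}[\XB^*,\XX^*]\|\le C$; then the duality pairing $m=\Ind_{A_m}[\XB^*,\XX^*]\bigl(\Ind_{A_m}[\XB,\XX]\bigr)\le C\,\|\Ind_{A_m}[\XB,\XX]\|$ combined with Lemma~\ref{lem:SignVsChar} delivers $\usdf[\XB,\XX](m)\gtrsim m$, whence $\bid_m[\XB,\XX]=\frac{1}{m}\usdf[\XB,\XX](m)\,\usdf[\XB^*,\XX^*](m)\approx\usdf[\XB^*,\XX^*](m)$. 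For $\dem_m[\XB^*,\XX^*]$, inequality \eqref{eq:uppervslower} applied to $\XB^*$ together with $\ldf[\XB^*,\XX^*]$ bounded yields $\usdf[\XB^*,\XX^*](m)\lesssim\dem_m[\XB^*,\XX^*]$, while the canonical identification $\xx_n(\Ind_B[\XB^*,\XX^*])=1$ for $n\in B$ gives $\|\Ind_B[\XB^*,\XX^*]\|\ge 1/\sup_n\|\xx_n\|$ uniformly in $B$, and therefore $\dem_m[\XB^*,\XX^*]\lesssim\usdf[\XB^*,\XX^*](m)$.

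\textbf{Squeeze symmetry, almost greedy, and main obstacle.} The squeeze symmetry parameters of $\XB^*$ are sandwiched between $\dem_m[\XB^*,\XX^*]$ (via $\dem_m\le\sqd_m\le\sq_m$ from Lemma~\ref{lem:SLCSSD} and Proposition~\ref{prop:SSDemRTO}) and $\bid_m[\XB,\XX]$ (via Proposition~\ref{prop:SSBiDem}), yielding $\sq_m[\XB^*,\XX^*]\approx\sqd_m[\XB^*,\XX^*]\approx\usdf[\XB^*,\XX^*](m)$. For $\leba_m[\XB^*,\XX^*]$, the chain in \eqref{eq:DLChain} supplies the lower bound $\leba_m\ge\slcd_m\ge\dem_m$. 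For the upper bound, Theorem~\ref{thm:MainNotMain} reduces matters to $\max\{\sqd_m[\XB^*,\XX^*],\qg_m[\XB^*,\XX^*]\}$, and Proposition~\ref{prop:QGDual} controls $\qg_m[\XB^*,\XX^*]\lesssim\qgc_m[\XB,\XX]+\sq_m[\XB,\XX]+\sq_m[\XB^*,\XX^*]$; each of the first two summands is then bounded by $\usdf[\XB^*,\XX^*](m)$ using the identity $\dom_m[\ell_1,\XB]=\usdf[\XB^*,\XX^*](m)$ together with the Banach triangle inequality $\|S_A(f)\|\le\sum|\xx_n^*(f)|\,\|\xx_n\|$ (for $\qgc_m[\XB,\XX]$) and the pigeonhole $\min_{n\in B}|\xx_n^*(f)|\le\frac{1}{|B|}\sum_{n\in B}|\xx_n^*(f)|$ combined with $\usdf[\XB,\XX](|A|)\lesssim|A|$ (for $\sq_m[\XB,\XX]$). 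The conceptual crux---and the main obstacle---is recognizing that $\XB^{**}\sim\XB$ silently promotes $\XX$ to a Banach space; without this reduction, the final bound on $\qgc_m[\XB,\XX]$ would lose a factor of $m^{1/p-1}$ in a genuine $p$-Banach space with $p<1$ and break the chain. Once Banachness is installed, the remaining work is bookkeeping among the equivalences already proved in Sections~\ref{sect:Main}--\ref{Applications:Duality}.
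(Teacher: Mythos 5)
Your proof is correct and follows essentially the same route as the paper's, which is a terse combination of Lemma~\ref{lem:SSParameters}, Propositions~\ref{prop:SSDemRTO} and \ref{prop:SSBiDem}, inequalities \eqref{eq:usdfdoml00} and \eqref{eq:uppervslower}, duality for the $\dom_m$'s, and the bound $\bid_m[\XB,\XX]\le \usdf[\XB^*,\XX^*](m)\sup_n\Vert\xx_n\Vert$ valid in the Banach setting; your explicit observation that $\XB^{**}\sim\XB$ forces $\XX$ to be locally convex is a detail the paper leaves implicit but does rely on. The only microscopic imprecision is the step $\slcd_m\ge\dem_m$, which holds only up to the constant of Proposition~\ref{prop:SLCDisjoint} because the democracy parameter allows non-disjoint sets; this is harmless for the stated equivalences.
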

\begin{proof}
The equivalence $\dom_m[\XB^*,c_0]\approx \dom_m[\ell_1,\XB]$ follows by duality. The inequality
\[
\bid_m[\XB,\XX]\le \usdf[\XB^*,\XX^*](m) \sup_n\Vert \xx_n\Vert
\]
holds for any basis of any Banach space. Combining Lemma~\ref{lem:SSParameters}, Propositions~\ref{prop:SSDemRTO} and \ref{prop:SSBiDem}, and inequalities \eqref{eq:usdfdoml00} and \eqref{eq:uppervslower} concludes the proof.
\end{proof}

Loosely speaking, Lemma~\ref{lem:closel1} says that, for bases close to canonical $\ell_1$-basis, the squeeze symmetry parameters of their dual basis measure how far the basis is from the unit vector system of $\ell_1$. We note that this applies in particular to the Lindenstrauss bases we considered in \S\ref{sect:LDB}.

By Theorem~\ref{eq:dualizeSS}, the dual basis $\XB^*$ of any squeeze symmetric basis $\XB$ of as quasi-Banach space $\XX$ satisfies $\leb_m[\XB^*,\XX^*]=O(\log m)$. For the canonical basis of the original Tsirelson space (which is not greedy) this general estimate is far from being optimal. To write down a precise statement of this estimate, we recursively define $\log^{(k)}\colon (e^{k-1},\infty) \to (0,\infty)$ by $\log^{(1)}=\log$ and
\[
\log^{(k)}=\log^{(k-1)} \circ \log.
\]
Since $\BB$ is an unconditional basis of $\Ts^*$, applying Lemma~\ref{lem:closel1} to the canonical basis of $\Ts$ yields $\leb_m[\BB,\Ts^*]\approx \dom_m[\ell_1,\Ts]$. Moreover, by \cite{CasShu1989}*{Proposition I.9.3},
\[
\dom_m[\ell_1,\Ts]\approx\sup\left\{\sum_{n=1}^m |a_n| \colon \Vert \sum_{n=1}^m a_n \ee_n \Vert_{\Ts} \le 1\right\}, \quad m\in\NN.
\]
Then, by \cite{CasShu1989}*{Proposition IV.b.3},
\[
\lim_m \frac{\leb_m[\BB,\Ts^*]}{\log^{(k)}(m)}=0
\]
for all $k\in\NN$.

\subsection{The dual basis of the Haar system in $\BV(\RR^d)$}\label{example:BV}
Given $d\in\NN$, $d\ge 2$, let $\Dy$ denote the set consisting of all dyadic cubes in the Euclidean space $\RR^d$. If $Q\in\Dy$ there is a unique $k=k(Q)\in\ZZ$ such that $|Q|=2^{-kd}$.
Given $P\in\Dy$ and $k\in\ZZ$ we define
\[
\Qy[P,k]=\{Q\in \Dy \colon Q\subseteq P,\; k(Q)=k\}.
\]
Of course, $\Qy[P,k]=\emptyset$ for all $k<k(P)$. Set also
\[
\Dy[P,k]=\bigcup_{j=k(P)}^{k-1} \Qy[P,j], \quad k>k(P).
\]

Given an interval $J\subseteq\RR$ we denote by $J_{l}$ its left-half and by $J_{r}$ its right-half, and we set $h_I^0=\chi_I$ and $h_J^1=-\chi_{J_{l}}+\chi_{J_{r}}$. For $\theta=(\theta_i)_{i=1}^d\in \Theta_d:=\{0,1\}^d\setminus\{0\}$ and $Q\in \prod_{i=1}^d J_i\in\Dy$ put
\[
h_{Q,\theta}=|Q|^{(1-d)/d}\prod_{j=1}^d h_{J_i}^{\theta_i}, \quad h_{Q,\theta}^*=|Q|^{-1/d}\prod_{i=1}^d h_{J_i}^{\theta_i},
\]
and we denote by $\XX$ be the subspace of $\BV(\RR^d)$ spanned by
\[
\HB=(h_{Q,\theta})_{(Q,\theta)\in\Dy\times\Theta_d}.
\]
Let $\Ave(f;Q)$ denote average value of $f\in L_1(\RR^d)$ over the cube $Q$. For every $f\in \BV(\RR^d)$, $P\in\Dy$, and $k\in\ZZ$, $k> k(P)$, we have
\begin{align*}
T_{P,k}(f)&:= \sum_{Q\in\Dy[P,k]}\sum_{\theta\in\Theta_d} h_{Q,\theta} \int_{\RR^d} f(x)\, h_{Q,\theta}^*(x)\, dx\\
&=-\Ave(f;P)\chi_P+ \sum_{Q\in\Qy[P,k]} \Ave(f;Q) \chi_Q.
\end{align*}
Hence, if $\pi\colon\NN\to\Dy\times \Theta_d$ is a bijection such that the sets
\[
\pi^{-1}(\{P\}\times \Theta_d)\quad\text{and}\quad
\pi^{-1}(\Qy[P,k(P)+1]\times \Theta_d)
\]
are integer intervals for every $P\in\Dy$, applying \cite{Woj2003}*{Corollary 12} gives that
$
(h_{\pi(n)})_{n=1}^\infty
$
is a semi-normalized Schauder basis of $\XX$. By \cite{CDVPX1999}*{Theorem 8.1 and Remark 8.1}, $\HB$ is a squeeze symmetric basis whose fundamental function is of the same order as $(m)_{m=1}^\infty$. By \cite{Woj2003}*{Theorem 10}, $\HB$ is a quasi-greedy basis of $\XX$. Pick a sequence $(Q_j)_{j=1}^\infty$ of pairwise disjoint dyadic cubes such that $k(Q_{j+1})=1+k(Q_j)$, and pick an arbitrary sequence $(\theta_j)_{j=1}^\infty$ in $\Theta_d$. By \cite{DKK2003}*{Corollary 8.6}, $(h_{Q_j,\theta_j})_{j=1}^\infty$ is equivalent to the unit vector system of $\ell_1$. By Lemma~\ref{lem:closel1} and Theorem~\ref{eq:dualizeSS}, the dual basis
\[
\HB^*=(h_{Q,\theta}^*)_{(Q,\theta)\in\Dy\times\Theta_d}.
\]
of $\HB$ satisfies
\[
\leba_m[\HB^*,\XX^*]\approx \sq_m[\HB^*,\XX^*] \approx \sqd_m[\HB^*,\XX^*]\approx \dem_m[\HB^*,\XX^*]
\]
and $\leb_m[\HB^*,\XX^*] =O( \log m)$. We will prove that
\[
\leb_m[\HB^*,\XX^*]\approx \leba_m[\HB^*,\XX^*]\approx \qg_m[\HB^*,\XX^*] \approx \log m.
\]
To that end, it suffices to show that $\log m=O( \usdf[\HB^*,\XX^*](m))$ and $\log m=O( \ucc[\HB^*,\XX^*](m))$.

For each $k\in\NN$ we define $f_k^*\in (\BV(\RR^d))^*$ by
\[
f_k^*(f)= \frac{\partial}{\partial x_1} \left(T_{[0,1]^d,k}(f)\right)\left( \left[ \frac{1}{3},\infty\right)\times\RR^{d-1}\right).
\]
It is clear that $\Vert f_k^*\Vert=\Vert f_k^*|_\XX\Vert$ for all $k\in\NN$, and $\sup_k \Vert f_k^*\Vert <\infty$. Note that for each $j\in\NN\cup\{0\}$ there is a unique dyadic interval $I_j$ with $|I_j|=2^{-j}$ and $1/3\in I_j$. Let $A_k$ (resp.\ $B_k$) be the subset of $\Dy\times\Theta_d$ defined by $(Q,\theta)\in A_k$ (resp. $(Q,\theta)\in B_k$) if and only if $\theta=(1,0,\dots,0)$, $Q=\prod_{i=1}^d J_i\subseteq [0,1)^d$, and $J_1=I_j$ for some even (resp.\ odd) integer $j\in[0,k-1]$. A routine computation yields
\[
f_k^*(h_{Q,\theta})=
\begin{cases}
0 & \text{ if $(Q,\theta)\notin A_k\cup B_k$,}\\
1 & \text{ if $(Q,\theta)\in A_k$, and} \\
-1 & \text{ if $(Q,\theta)\in B_k$} \\
\end{cases}
\]
(cf.\ \cite{GHO2013}*{Example 2}). In other words, $f_k^*|_\XX=\Ind_{A_k}[\HB^*,\XX^*]-\Ind_{B_k}[\HB^*,\XX^*]$. Set $f=\chi_{[0,1/3)\times [0,1)^{d-1}}$. The arguments in \cite{GHO2013}*{Example 2} also give
\[
\Ind_{A_k}[\HB^*,(\BV(\RR^d))^*](f)=\frac{1}{3} \lceil k \rceil, \quad k\in\NN.
\]
Since
\[
|A_k\cup B_k|=\frac{ 2^{(d-1)k}-1}{2^{d-1}-1}
\]
we are done. Note that this yields $\unc_m[\HB,\XX]=\unc_m[\HB^*,\XX^*]\approx \log m$.

\subsection{The Franklin system as a basis of $\VMO$}
As in \S\ref{example:BV}, we denote by $\Dy$ the set consisting of all $d$-dimensional dyadic cubes, $d\in\NN$. The homogeneous Triebel-Lizorkin sequence space $\ring{\tl}_{p,q}^{d}$ of indeces $p,q\in(0,\infty)$ consists of all scalar sequences $f=(a_Q)_{Q\in\Dy}$ for which
\[
\Vert f \Vert_{ \tl_{p,q}^{d}}= \left\Vert \left(\sum_{Q\in\Dy} |Q|^{-q/p} |a_Q|^q \chi_{Q}\right)^{1/q}\right\Vert_p<\infty.
\]
By definition, the unit vector system $\BB=(\ee_Q)_{Q\in\Dy}$ is a normalized unconditional basis of $\ring{\tl}_{p,q}^{d}$. Moreover, it is a democratic (hence greedy) basis whose fundamental function is of the same order as $(m^{1/p})_{m=1}^\infty$ (see \cite{AABW2019}*{\S11.3}). Let $\Dy_0$ denote the set consisting of all dyadic cubes contained in $[0,1]^d$, and consider the subbasis $\BB_0=(\ee_Q)_{Q\in\Dy_0}$ of $\BB$. It is known that certain wavelet bases of homegeneous (resp.\ inhomogenous) Triebel-Lizorkin function spaces $\ring{F}_{p,q}^{s}(\RR^d)$ (resp.\ $F_{p,q}^{s}(\RR^d)$) of smoothness $s\in\RR$ are equivalent to $\BB$ (resp.\ $\BB_0$) regarded as a basis (resp.\ basic sequence) of $\ring{\tl}_{p,q}^{d}$ (see \cite{FrJaWe1991}*{Theorem 7.20} for the homegeneous case and \cite{TriebelIII}*{Theorem 3.5} for the inhomogenous case). In the particular case that $p=1$, $q=2$ and $d=1$, $\BB_0$ is equivalent to both the Franklin system in the Hardy space $H_1$ and the Haar system in the dyadic Hardy space $H_1(\delta)$ (see \cites{Maurey1980,Woj1982}). Consequently, the dual basis of $\BB_0$ is equivalent to both the Franklin system regarded as a basis of $\VMO$ and the Haar system regarded as a basic sequence in dyadic $\BMO$.

Suppose that $1<q<\infty$ and $r=q'$. Consider the space $\ring{\tl}_{\infty,r}^{d}$ consisting of all sequences $f=(a_\lambda)_{Q\in\Dy}$ satisfying the Carleson-type condition
\[
\Vert f \Vert_{ \tl_{\infty,r}^d}= \sup_{P\in\Dy} \left(\frac{1}{|P|} \sum_{\substack{Q\in\Dy\\ Q\subseteq P}} |Q|\, |a_Q|^2 \right)^{1/2}<\infty.
\]
It is known that the dual space of $\ring{\tl}_{1,q}^{d}$ is $\ring{\tl}_{\infty,r}^{d}$ under the natural pairing (see \cite{FrazierJawerth1990}*{Equation (5.2)}. Our analysis of the unit vector system of $\ring{\tl}_{\infty,r}^{d}$ relies on the following lemma.
\begin{lemma}\label{lem:Carleson}
Let $d\in\NN$. There is a constant $C$ such that for every $\AD\subseteq\Dy$ and $P\in\Dy$
\[
L(\AD,P):= \sum_{\substack{Q\in \AD \\ Q\subseteq P}} |Q| \le C |P| \log(1+|\AD|).
\]
Moreover, for every $P\in\Dy$ and $m\in\NN$ there is $\AD\subseteq\Dy$ with $|\AD|= m$ and $ \log(1+m)\le C \, L(\AD,P)$.
\end{lemma}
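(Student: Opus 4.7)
For the upper bound I would stratify $\AD\cap\{Q\subseteq P\}$ by generation. Writing $\AD_k=\{Q\in\AD\colon Q\subseteq P,\;k(Q)=k\}$ for $k\ge k(P)$, two trivial facts control $|\AD_k|$: there are exactly $2^{(k-k(P))d}$ dyadic cubes of generation $k$ contained in $P$, and of course $|\AD_k|\le|\AD|$. Since every $Q\in\AD_k$ has $|Q|=2^{-(k-k(P))d}|P|$, these give
\[
\sum_{Q\in\AD_k}|Q|\le |P|\,\min\bigl\{1,\;|\AD|\cdot 2^{-(k-k(P))d}\bigr\}.
\]
Summing over $k\ge k(P)$ and splitting the range at the threshold $k_1:=k(P)+\lceil d^{-1}\log_2|\AD|\rceil$, the ``flat'' part with $k<k_1$ contributes at most $(k_1-k(P))|P|=O\bigl(|P|\log(1+|\AD|)\bigr)$ and the ``tail'' with $k\ge k_1$ is a geometric series of ratio $2^{-d}$ with first term $O(|P|)$, hence $O(|P|)$ in total. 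Combining the two regimes produces a constant $C=C(d)$ for which $L(\AD,P)\le C\,|P|\log(1+|\AD|)$.

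For the lower bound, fix $P$ and $m$, choose the smallest $\ell\in\NN$ with $\sum_{j=0}^{\ell}2^{jd}\ge m$ (so $\ell\gtrsim d^{-1}\log_2 m$), and let $\AD$ consist of $P$ itself together with all of its dyadic descendants at generations $k(P)+1,\ldots,k(P)+\ell-1$, topped off by enough cubes from generation $k(P)+\ell$ to bring $|\AD|$ up to exactly $m$. Then for each $j\in\{0,1,\ldots,\ell-1\}$ the generation $k(P)+j$ contributes exactly $2^{jd}\cdot 2^{-(k(P)+j)d}=|P|$ to $L(\AD,P)$, and the last (partially filled) generation contributes nonnegatively; hence
\[
L(\AD,P)\ge \ell\,|P|\gtrsim |P|\log(1+m),
\]
which I read as the intended statement (the printed inequality $\log(1+m)\le C\,L(\AD,P)$ appears to be missing the factor $|P|$, but this is forced by the obvious scaling of $L$).

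I expect no real obstacle beyond careful bookkeeping: the only delicate point is verifying, on the upper-bound side, that the two regimes join to yield a constant $C$ depending only on $d$, and on the lower-bound side, confirming that the ``greedy packing by generations'' exhausts $P$ fast enough that one gets $\log m$ contributions of the full size $|P|$ rather than, say, a convergent geometric sum. Both facts come out cleanly from the observation that dyadic cubes of the same generation inside $P$ tile $P$.
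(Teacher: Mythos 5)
Your argument is correct and is essentially the paper's: both proofs bound each generation's contribution by $|P|$ (using that same-generation cubes tile $P$) for the first $O(\log|\AD|)$ generations and by a convergent geometric tail thereafter, and both take for the lower bound the family of all dyadic subcubes of $P$ in the first $\approx \log m$ generations, topped off at the last level. You are also right that the ``moreover'' inequality should carry the factor $|P|$ on its left-hand side; the paper's proof normalizes $P=[0,1]^d$ by homogeneity, which amounts to the same reading.
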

\begin{proof}
By homogeneity, we can assume that $P=[0,1]^d$. Given $k\in\NN$ we set
\[
\AD_k=\{ Q\in\Dy \colon Q\subseteq[0,1]^d,\; |Q|\ge 2^{-k+1}\}.
\]
We have $L(\AD_k,[0,1]^d)=k$ and
\[
|\AD_k|=m(k):=\frac{2^{dk}-1}{2^d-1}, \quad k\in\NN.
\]
Given $\AD\subseteq\Dy$, let $k\in\NN$ be such that $m(k)\le |\AD| <m(k+1)$. Set $\AD'=\{Q\in\AD \colon |Q|\le 2^{-kd}\}$. We have
\begin{align*}
L(\AD,[0,1]^d)
&\le L(\AD_k,[0,1]^d)+ S(\AD',[0,1]^d)\\
&\le k + 2^{-kd} |\AD'|\\
&\le k +2^{-kd} m(k+1)\\
&\le k+\frac{2^d}{2^d -1}.
\end{align*}
Since $\sup_k (k+(1-2^{-d})^{-1})/\log(1+m(k))<\infty$, we are done. For the `moreover' part, we pick $k\in\NN$ such that
$m(k)\le m < m(k+1)$ and $\AD\supseteq \AD_{k}$ with $|\AD|=m$. Then,
\[
L(\AD,[0,1]^d)\ge L(\AD_{k},[0,1]^d)=k\ge c \log(1+m),
\]
where $c=\inf_k k/\log(m(k+1))>0$.
\end{proof}

Finally we are in a position to estimate the constants of the unit vector system of $\ring{\tl}_{\infty,r}^{d}$. If $\Dy_1\subseteq\Dy$ consists of pairwise disjoint dyadic cubes, then $(\ee_Q)_{Q\in\Dy_1}$ is, when regarded as a basic sequence in $\ring{\tl}_{1,q}^{d}$, isometrically equivalent to the unit vector system of $\ell_1$. Therefore we can apply Lemma~\ref{lem:Carleson} to obtain that $\usdf[\BB,\ring{\tl}_{\infty,r}^{d}]$ and $\usdf[\BB_0,\ring{\tl}_{\infty,r}^{d}]$ grow as $((\log m)^{1/r})_{m=2}^\infty$ Therefore, applying Lemma~\ref{lem:closel1} gives that
\begin{align*}
\leb_m[\BB,\ring{\tl}_{\infty,r}^{d}]
&\approx \leb_m[\BB_0,\ring{\tl}_{\infty,r}^{d}]
\approx\leba_m[\BB,\ring{\tl}_{\infty,r}^{d}]
\approx \leba_m[\BB_0,\ring{\tl}_{\infty,r}^{d}]\\
&\approx ((\log m)^{1/r})_{m=2}^\infty.
\end{align*}




\begin{bibdiv}
\begin{biblist}

\bib{AlbiacAnsorena2017b}{article}{
 author={Albiac, F.},
 author={Ansorena, J.~L.},
 title={Characterization of 1-almost greedy bases},
 date={2017},
 ISSN={1139-1138},
 journal={Rev. Mat. Complut.},
 volume={30},
 number={1},
 pages={13\ndash 24},
 url={https://doi-org.umbral.unirioja.es/10.1007/s13163-016-0204-3},
 review={\MR{3596024}},
}

\bib{AlbiacAnsorena2016}{article}{
 author={Albiac, Fernando},
 author={Ansorena, Jos\'{e}~L.},
 title={Lorentz spaces and embeddings induced by almost greedy bases in
 {B}anach spaces},
 date={2016},
 ISSN={0176-4276},
 journal={Constr. Approx.},
 volume={43},
 number={2},
 pages={197\ndash 215},
 url={https://doi-org/10.1007/s00365-015-9293-3},
 review={\MR{3472645}},
}

\bib{AAB2020}{article}{
 author={Albiac, Fernando},
 author={Ansorena, Jos\'{e}~L.},
 author={Bern\'{a}, Pablo~M.},
 title={Asymptotic greediness of the {H}aar system in the spaces
 {$L_p[0,1]$}, {$1<p<\infty$}},
 date={2020},
 ISSN={0176-4276},
 journal={Constr. Approx.},
 volume={51},
 number={3},
 pages={427\ndash 440},
 url={https://doi.org/10.1007/s00365-019-09466-1},
 review={\MR{4093110}},
}

\bib{AABW2019}{article}{
 author={Albiac, Fernando},
 author={Ansorena, Jos\'{e}~L.},
 author={Bern\'{a}, Pablo~M.},
 author={Wojtaszczyk, Przemys{\l}aw},
 title={Greedy approximation for biorthogonal systems in quasi-banach
 spaces},
 date={2019},
 journal={arXiv e-prints},
 eprint={1903.11651},
 note={Accepted for Publication in Dissertationes Math.},
}

\bib{AACV2019}{article}{
 author={Albiac, Fernando},
 author={Ansorena, Jos\'{e}~L.},
 author={Ciaurri, \'{O}scar},
 author={Varona, Juan~L.},
 title={Unconditional and quasi-greedy bases in {$L_p$} with applications
 to {J}acobi polynomials {F}ourier series},
 date={2019},
 ISSN={0213-2230},
 journal={Rev. Mat. Iberoam.},
 volume={35},
 number={2},
 pages={561\ndash 574},
 url={https://doi.org/10.4171/rmi/1062},
 review={\MR{3945734}},
}

\bib{AADK2019b}{article}{
 author={Albiac, Fernando},
 author={Ansorena, Jos\'{e}~L.},
 author={Dilworth, Stephen~J.},
 author={Kutzarova, Denka},
 title={Building highly conditional almost greedy and quasi-greedy bases
 in {B}anach spaces},
 date={2019},
 ISSN={0022-1236},
 journal={J. Funct. Anal.},
 volume={276},
 number={6},
 pages={1893\ndash 1924},
 url={https://doi-org.umbral.unirioja.es/10.1016/j.jfa.2018.08.015},
 review={\MR{3912795}},
}

\bib{AAW2018b}{article}{
 author={Albiac, Fernando},
 author={Ansorena, Jos\'{e}~L.},
 author={Wallis, Ben},
 title={1-greedy renormings of {G}arling sequence spaces},
 date={2018},
 ISSN={0021-9045},
 journal={J. Approx. Theory},
 volume={230},
 pages={13\ndash 23},
 url={https://doi-org.umbral.unirioja.es/10.1016/j.jat.2018.03.002},
 review={\MR{3800094}},
}

\bib{AAW2020}{article}{
 author={Albiac, Fernando},
 author={Ansorena, Jos\'{e}~Luis},
 author={Wojtaszczyk, Przemys{\l}aw},
 title={On certain subspaces of {$\ell_p$} for {$0<p\leq1$} and their
 applications to conditional quasi-greedy bases in {$p$}-{B}anach spaces},
 date={2021},
 ISSN={0025-5831},
 journal={Math. Ann.},
 volume={379},
 number={1-2},
 pages={465\ndash 502},
 url={https://doi-org.umbral.unirioja.es/10.1007/s00208-020-02069-3},
 review={\MR{4211094}},
}

\bib{AlbiacKalton2016}{book}{
 author={Albiac, Fernando},
 author={Kalton, Nigel~J.},
 title={Topics in {B}anach space theory},
 edition={Second},
 series={Graduate Texts in Mathematics},
 publisher={Springer, [Cham]},
 date={2016},
 volume={233},
 ISBN={978-3-319-31555-3; 978-3-319-31557-7},
 url={https://doi.org/10.1007/978-3-319-31557-7},
 note={With a foreword by Gilles Godefroy},
 review={\MR{3526021}},
}

\bib{AW2006}{article}{
 author={Albiac, Fernando},
 author={Wojtaszczyk, Przemys{\l}aw},
 title={Characterization of 1-greedy bases},
 date={2006},
 ISSN={0021-9045},
 journal={J. Approx. Theory},
 volume={138},
 number={1},
 pages={65\ndash 86},
 url={https://doi.org/10.1016/j.jat.2005.09.017},
 review={\MR{2197603}},
}

\bib{Ansorena2018}{article}{
 author={Ansorena, Jos\'{e}~L.},
 title={A note on subsymmetric renormings of {B}anach spaces},
 date={2018},
 ISSN={1607-3606},
 journal={Quaest. Math.},
 volume={41},
 number={5},
 pages={615\ndash 628},
 url={https://doi-org/10.2989/16073606.2017.1393704},
 review={\MR{3836410}},
}

\bib{Aoki1942}{article}{
 author={Aoki, Tosio},
 title={Locally bounded linear topological spaces},
 date={1942},
 ISSN={0369-9846},
 journal={Proc. Imp. Acad. Tokyo},
 volume={18},
 pages={588\ndash 594},
 url={http://projecteuclid.org/euclid.pja/1195573733},
 review={\MR{14182}},
}

\bib{BBG2017}{article}{
 author={Bern\'{a}, Pablo~M.},
 author={Blasco, \'{O}scar},
 author={Garrig\'{o}s, Gustavo},
 title={Lebesgue inequalities for the greedy algorithm in general bases},
 date={2017},
 ISSN={1139-1138},
 journal={Rev. Mat. Complut.},
 volume={30},
 number={2},
 pages={369\ndash 392},
 url={https://doi.org/10.1007/s13163-017-0221-x},
 review={\MR{3642039}},
}

\bib{BBGHO2018}{article}{
 author={Bern\'{a}, Pablo~M.},
 author={Blasco, Oscar},
 author={Garrig\'{o}s, Gustavo},
 author={Hern\'{a}ndez, Eugenio},
 author={Oikhberg, Timur},
 title={Embeddings and {L}ebesgue-type inequalities for the greedy
 algorithm in {B}anach spaces},
 date={2018},
 ISSN={0176-4276},
 journal={Constr. Approx.},
 volume={48},
 number={3},
 pages={415\ndash 451},
 url={https://doi.org/10.1007/s00365-018-9415-9},
 review={\MR{3869447}},
}

\bib{CRS2007}{article}{
 author={Carro, Mar\'{\i}a~J.},
 author={Raposo, Jos\'{e}~A.},
 author={Soria, Javier},
 title={Recent developments in the theory of {L}orentz spaces and
 weighted inequalities},
 date={2007},
 ISSN={0065-9266},
 journal={Mem. Amer. Math. Soc.},
 volume={187},
 number={877},
 pages={xii+128},
 url={https://doi-org/10.1090/memo/0877},
 review={\MR{2308059}},
}

\bib{CasShu1989}{book}{
 author={Casazza, Peter~G.},
 author={Shura, Thaddeus~J.},
 title={Tsire{l\cprime}son's space},
 series={Lecture Notes in Mathematics},
 publisher={Springer-Verlag, Berlin},
 date={1989},
 volume={1363},
 ISBN={3-540-50678-0},
 url={https://doi-org/10.1007/BFb0085267},
 note={With an appendix by J. Baker, O. Slotterbeck and R. Aron},
 review={\MR{981801}},
}

\bib{Tsirelson1974}{article}{
 author={Cirel\cprime~son, Boris~S.},
 title={It is impossible to imbed {$\ell_{p}$} or {$c_{0}$} into an
 arbitrary {B}anach space},
 date={1974},
 ISSN={0374-1990},
 journal={Funkcional. Anal. i Prilo\v{z}en.},
 volume={8},
 number={2},
 pages={57\ndash 60},
 review={\MR{0350378}},
}

\bib{CDVPX1999}{article}{
 author={Cohen, Albert},
 author={DeVore, Ronald},
 author={Petrushev, Pencho},
 author={Xu, Hong},
 title={Nonlinear approximation and the space {${\rm BV}({\bf R}^2)$}},
 date={1999},
 ISSN={0002-9327},
 journal={Amer. J. Math.},
 volume={121},
 number={3},
 pages={587\ndash 628},
 url={http://muse.jhu.edu/journals/american_journal_of_mathematics/v121/121.3cohen.pdf},
 review={\MR{1738406}},
}

\bib{DKK2003}{article}{
 author={Dilworth, Stephen~J.},
 author={Kalton, Nigel~J.},
 author={Kutzarova, Denka},
 title={On the existence of almost greedy bases in {B}anach spaces},
 date={2003},
 ISSN={0039-3223},
 journal={Studia Math.},
 volume={159},
 number={1},
 pages={67\ndash 101},
 url={https://doi.org/10.4064/sm159-1-4},
 note={Dedicated to Professor Aleksander Pe{\l}czy\'nski on the occasion
 of his 70th birthday},
 review={\MR{2030904}},
}

\bib{DKKT2003}{article}{
 author={Dilworth, Stephen~J.},
 author={Kalton, Nigel~J.},
 author={Kutzarova, Denka},
 author={Temlyakov, Vladimir~N.},
 title={The thresholding greedy algorithm, greedy bases, and duality},
 date={2003},
 ISSN={0176-4276},
 journal={Constr. Approx.},
 volume={19},
 number={4},
 pages={575\ndash 597},
 url={https://doi-org/10.1007/s00365-002-0525-y},
 review={\MR{1998906}},
}

\bib{DKOSZ2014}{article}{
 author={Dilworth, Stephen~J.},
 author={Kutzarova, Denka},
 author={Odell, Edward~W.},
 author={Schlumprecht, Thomas},
 author={Zs\'{a}k, Andr\'{a}s},
 title={Renorming spaces with greedy bases},
 date={2014},
 ISSN={0021-9045},
 journal={J. Approx. Theory},
 volume={188},
 pages={39\ndash 56},
 url={https://doi.org/10.1016/j.jat.2014.09.001},
 review={\MR{3274228}},
}

\bib{DKW2002}{article}{
 author={Dilworth, Stephen~J.},
 author={Kutzarova, Denka},
 author={Wojtaszczyk, Przemys{\l}aw},
 title={On approximate {$l_1$} systems in {B}anach spaces},
 date={2002},
 ISSN={0021-9045},
 journal={J. Approx. Theory},
 volume={114},
 number={2},
 pages={214\ndash 241},
 url={https://doi.org/10.1006/jath.2001.3641},
 review={\MR{1883407}},
}

\bib{DOSZ2011}{article}{
 author={Dilworth, Stephen~J.},
 author={Odell, Edward~W.},
 author={Schlumprecht, Thomas},
 author={Zs\'{a}k, Andr\'{a}s},
 title={Renormings and symmetry properties of 1-greedy bases},
 date={2011},
 ISSN={0021-9045},
 journal={J. Approx. Theory},
 volume={163},
 number={9},
 pages={1049\ndash 1075},
 url={https://doi.org/10.1016/j.jat.2011.02.013},
 review={\MR{2832742}},
}

\bib{DSBT2012}{article}{
 author={Dilworth, Stephen~J.},
 author={Soto-Bajo, Mois\'es},
 author={Temlyakov, Vladimir~N.},
 title={Quasi-greedy bases and {L}ebesgue-type inequalities},
 date={2012},
 ISSN={0039-3223},
 journal={Studia Math.},
 volume={211},
 number={1},
 pages={41\ndash 69},
 url={https://doi-org/10.4064/sm211-1-3},
 review={\MR{2990558}},
}

\bib{Donoho1993}{article}{
 author={Donoho, David~L.},
 title={Unconditional bases are optimal bases for data compression and
 for statistical estimation},
 date={1993},
 ISSN={1063-5203},
 journal={Appl. Comput. Harmon. Anal.},
 volume={1},
 number={1},
 pages={100\ndash 115},
 url={https://doi.org/10.1006/acha.1993.1008},
 review={\MR{1256530}},
}

\bib{DRS1969}{article}{
 author={Duren, Peter~L.},
 author={Romberg, Bernhard~W.},
 author={Shields, Allen~L.},
 title={Linear functionals on {$H^{p}$} spaces with {$0<p<1$}},
 date={1969},
 ISSN={0075-4102},
 journal={J. Reine Angew. Math.},
 volume={238},
 pages={32\ndash 60},
 review={\MR{259579}},
}

\bib{FigielJohnson1974}{article}{
 author={Figiel, Tadeusz},
 author={Johnson, William~B.},
 title={A uniformly convex {B}anach space which contains no {$l_{p}$}},
 date={1974},
 ISSN={0010-437X},
 journal={Compositio Math.},
 volume={29},
 pages={179\ndash 190},
 review={\MR{355537}},
}

\bib{FrazierJawerth1990}{article}{
 author={Frazier, Michael},
 author={Jawerth, Bj\"{o}rn},
 title={A discrete transform and decompositions of distribution spaces},
 date={1990},
 ISSN={0022-1236},
 journal={J. Funct. Anal.},
 volume={93},
 number={1},
 pages={34\ndash 170},
 url={https://doi.org/10.1016/0022-1236(90)90137-A},
 review={\MR{1070037}},
}

\bib{FrJaWe1991}{book}{
 author={Frazier, Michael},
 author={Jawerth, Bj\"{o}rn},
 author={Weiss, Guido},
 title={Littlewood-{P}aley theory and the study of function spaces},
 series={CBMS Regional Conference Series in Mathematics},
 publisher={Published for the Conference Board of the Mathematical Sciences,
 Washington, DC; by the American Mathematical Society, Providence, RI},
 date={1991},
 volume={79},
 ISBN={0-8218-0731-5},
 url={https://doi.org/10.1090/cbms/079},
 review={\MR{1107300}},
}

\bib{GHO2013}{article}{
 author={Garrig\'os, Gustavo},
 author={Hern\'{a}ndez, Eugenio},
 author={Oikhberg, Timur},
 title={Lebesgue-type inequalities for quasi-greedy bases},
 date={2013},
 ISSN={0176-4276},
 journal={Constr. Approx.},
 volume={38},
 number={3},
 pages={447\ndash 470},
 url={https://doi-org/10.1007/s00365-013-9209-z},
 review={\MR{3122278}},
}

\bib{Gogyan2016}{article}{
 author={Gogyan, Smbat~L.},
 title={The quasi-greedy property of subsystems of the multivariate
 {H}aar system},
 date={2016},
 ISSN={1607-0046},
 journal={Izv. Ross. Akad. Nauk Ser. Mat.},
 volume={80},
 number={3},
 pages={23\ndash 30},
 url={https://doi.org/10.4213/im8346},
 review={\MR{3507385}},
}

\bib{Grafakos2004}{book}{
 author={Grafakos, Loukas},
 title={Classical and modern {F}ourier analysis},
 publisher={Pearson Education, Inc., Upper Saddle River, NJ},
 date={2004},
 ISBN={0-13-035399-X},
 review={\MR{2449250}},
}

\bib{GPV1990}{article}{
 author={Guadalupe, Jos\'{e}~J.},
 author={P\'{e}rez, Mario},
 author={Varona, Juan~L.},
 title={Weak behaviour of {F}ourier-{J}acobi series},
 date={1990},
 ISSN={0021-9045},
 journal={J. Approx. Theory},
 volume={61},
 number={2},
 pages={222\ndash 238},
 url={https://doi.org/10.1016/0021-9045(90)90005-B},
 review={\MR{1050619}},
}

\bib{KadecPel1962}{article}{
 author={Kadets, Mikhail~I.},
 author={Pe{\l}czy{\'n}ski, Aleksander},
 title={Bases, lacunary sequences and complemented subspaces in the
 spaces {$L_{p}$}},
 date={1961/1962},
 ISSN={0039-3223},
 journal={Studia Math.},
 volume={21},
 pages={161\ndash 176},
 review={\MR{0152879}},
}

\bib{KoTe1999}{article}{
 author={Konyagin, Sergei~V.},
 author={Temlyakov, V.N.},
 title={A remark on greedy approximation in {B}anach spaces},
 date={1999},
 ISSN={1310-6236},
 journal={East J. Approx.},
 volume={5},
 number={3},
 pages={365\ndash 379},
 review={\MR{1716087}},
}

\bib{Lin1964}{article}{
 author={Lindenstrauss, Joram},
 title={On a certain subspace of {$l_{1}$}},
 date={1964},
 ISSN={0001-4117},
 journal={Bull. Acad. Polon. Sci. S\'{e}r. Sci. Math. Astronom. Phys.},
 volume={12},
 pages={539\ndash 542},
 review={\MR{174963}},
}

\bib{LinTza1977}{book}{
 author={Lindenstrauss, Joram},
 author={Tzafriri, Lior},
 title={Classical {B}anach spaces. {I}},
 publisher={Springer-Verlag, Berlin-New York},
 date={1977},
 ISBN={3-540-08072-4},
 note={Sequence spaces, Ergebnisse der Mathematik und ihrer
 Grenzgebiete, Vol. 92},
 review={\MR{0500056}},
}

\bib{Maurey1980}{article}{
 author={Maurey, Bernard},
 title={Isomorphismes entre espaces {$H_{1}$}},
 date={1980},
 ISSN={0001-5962},
 journal={Acta Math.},
 volume={145},
 number={1-2},
 pages={79\ndash 120},
 url={https://doi.org/10.1007/BF02414186},
 review={\MR{586594}},
}

\bib{MPS1981}{article}{
 author={McGehee, O.~Carruth},
 author={Pigno, Louis},
 author={Smith, Brent},
 title={Hardy's inequality and the {$L^{1}$} norm of exponential sums},
 date={1981},
 ISSN={0003-486X},
 journal={Ann. of Math. (2)},
 volume={113},
 number={3},
 pages={613\ndash 618},
 url={https://doi.org/10.2307/2007000},
 review={\MR{621019}},
}

\bib{Mehta2015}{article}{
 author={Mehta, Harsh},
 title={The {$L^1$} norms of de la {V}all\'{e}e {P}oussin kernels},
 date={2015},
 ISSN={0022-247X},
 journal={J. Math. Anal. Appl.},
 volume={422},
 number={2},
 pages={825\ndash 837},
 url={https://doi.org/10.1016/j.jmaa.2014.09.018},
 review={\MR{3269485}},
}

\bib{Oswald2001}{article}{
 author={Oswald, Peter},
 title={Greedy algorithms and best {$m$}-term approximation with respect
 to biorthogonal systems},
 date={2001},
 ISSN={1069-5869},
 journal={J. Fourier Anal. Appl.},
 volume={7},
 number={4},
 pages={325\ndash 341},
 url={https://doi.org/10.1007/BF02514500},
 review={\MR{1836816}},
}

\bib{Pollard1949}{article}{
 author={Pollard, Harry},
 title={The mean convergence of orthogonal series. {III}},
 date={1949},
 ISSN={0012-7094},
 journal={Duke Math. J.},
 volume={16},
 pages={189\ndash 191},
 url={http://projecteuclid.org/euclid.dmj/1077475368},
 review={\MR{28459}},
}

\bib{Rolewicz1957}{article}{
 author={Rolewicz, Stefan},
 title={On a certain class of linear metric spaces},
 date={1957},
 journal={Bull. Acad. Polon. Sci. Cl. III.},
 volume={5},
 pages={471\ndash 473, XL},
 review={\MR{0088682}},
}

\bib{Rudin1959}{article}{
 author={Rudin, Walter},
 title={Some theorems on {F}ourier coefficients},
 date={1959},
 ISSN={0002-9939},
 journal={Proc. Amer. Math. Soc.},
 volume={10},
 pages={855\ndash 859},
 url={https://doi.org/10.2307/2033608},
 review={\MR{116184}},
}

\bib{Tembook}{book}{
 author={Temlyakov, Vladimir},
 title={Greedy approximation},
 series={Cambridge Monographs on Applied and Computational Mathematics},
 publisher={Cambridge University Press, Cambridge},
 date={2011},
 volume={20},
 ISBN={978-1-107-00337-8},
 url={https://doi.org/10.1017/CBO9780511762291},
 review={\MR{2848161}},
}

\bib{Temlyakov2011}{misc}{
 author={Temlyakov, Vladimir~M.},
 date={2011},
 note={{Question posed during the \emph{Concentration week on greedy
 algorithms in Banach spaces and compressed sensing} held on 18--22 July,
 2011, at Texas A\&M University.}},
}

\bib{Temlyakov1998}{article}{
 author={Temlyakov, Vladimir~N.},
 title={The best {$m$}-term approximation and greedy algorithms},
 date={1998},
 ISSN={1019-7168},
 journal={Adv. Comput. Math.},
 volume={8},
 number={3},
 pages={249\ndash 265},
 url={https://doi.org/10.1023/A:1018900431309},
 review={\MR{1628182}},
}

\bib{Temlyakov1998b}{article}{
 author={Temlyakov, Vladimir~N.},
 title={Greedy algorithm and {$m$}-term trigonometric approximation},
 date={1998},
 ISSN={0176-4276},
 journal={Constr. Approx.},
 volume={14},
 number={4},
 pages={569\ndash 587},
 url={https://doi.org/10.1007/s003659900090},
 review={\MR{1646563}},
}

\bib{Temlyakov2015}{article}{
 author={Temlyakov, Vladimir~N.},
 title={Constructive sparse trigonometric approximations and other
 problems for functions with mixed smoothness},
 date={2015},
 ISSN={0368-8666},
 journal={Mat. Sb.},
 volume={206},
 number={11},
 pages={131\ndash 160},
 url={https://doi.org/10.4213/sm8466},
 review={\MR{3438571}},
}

\bib{TemYangYe2}{article}{
 author={Temlyakov, Vladimir~N.},
 author={Yang, Mingrui},
 author={Ye, Peixin},
 title={Greedy approximation with regard to non-greedy bases},
 date={2011},
 ISSN={1019-7168},
 journal={Adv. Comput. Math.},
 volume={34},
 number={3},
 pages={319\ndash 337},
 url={https://doi.org/10.1007/s10444-010-9155-2},
 review={\MR{2776447}},
}

\bib{TemYangYe1}{article}{
 author={Temlyakov, Vladimir~N.},
 author={Yang, Mingrui},
 author={Ye, Peixin},
 title={Lebesgue-type inequalities for greedy approximation with respect
 to quasi-greedy bases},
 date={2011},
 ISSN={1310-6236},
 journal={East J. Approx.},
 volume={17},
 number={2},
 pages={203\ndash 214},
 review={\MR{2883511}},
}

\bib{TriebelIII}{book}{
 author={Triebel, Hans},
 title={Theory of function spaces. {III}},
 series={Monographs in Mathematics},
 publisher={Birkh\"{a}user Verlag, Basel},
 date={2006},
 volume={100},
 ISBN={978-3-7643-7581-2; 3-7643-7581-7},
 review={\MR{2250142}},
}

\bib{Veprintsev2016}{article}{
 author={Veprintsev, Roman~A.},
 title={On Paley-type and Hausdorff-Young-Paley-type inequalities for
 Jacobi expansions},
 date={2016},
 journal={arXiv e-prints},
 eprint={1603.03965},
}

\bib{Woj1982}{article}{
 author={Wojtaszczyk, Przemys{\l}aw},
 title={The {F}ranklin system is an unconditional basis in {$H_{1}$}},
 date={1982},
 ISSN={0004-2080},
 journal={Ark. Mat.},
 volume={20},
 number={2},
 pages={293\ndash 300},
 url={https://doi.org/10.1007/BF02390514},
 review={\MR{686177}},
}

\bib{Woj2000}{article}{
 author={Wojtaszczyk, Przemys{\l}aw},
 title={Greedy algorithm for general biorthogonal systems},
 date={2000},
 ISSN={0021-9045},
 journal={J. Approx. Theory},
 volume={107},
 number={2},
 pages={293\ndash 314},
 url={https://doi-org/10.1006/jath.2000.3512},
 review={\MR{1806955}},
}

\bib{Woj2003}{article}{
 author={Wojtaszczyk, Przemys{\l}aw},
 title={Projections and non-linear approximation in the space {$\BV(\RR^d)$}},
 date={2003},
 ISSN={0024-6115},
 journal={Proc. London Math. Soc. (3)},
 volume={87},
 number={2},
 pages={471\ndash 497},
 url={https://doi.org/10.1112/S0024611503014084},
 review={\MR{1990936}},
}

\bib{Woj2014}{article}{
 author={Wojtaszczyk, Przemys{\l}aw},
 title={On left democracy function},
 date={2014},
 ISSN={0208-6573},
 journal={Funct. Approx. Comment. Math.},
 volume={50},
 number={2},
 pages={207\ndash 214},
 url={https://doi-org/10.7169/facm/2014.50.2.1},
 review={\MR{3229057}},
}

\end{biblist}
\end{bibdiv}

\end{document}